\theoremstyle{plain}
\newtheorem{theorem}{Theorem}[section]
\newtheorem{remark}{Remark}[section]
\newtheorem{lemma}[theorem]{Lemma}
\newtheorem{corollary}[theorem]{Corollary}
\theoremstyle{definition}
\newtheorem{assumption}[theorem]{Assumption}
\newtheorem{example}{Example}
\newcommand{\non}{\nonumber}
\begin{document}
	
	\begin{frontmatter}
		\title{Asymptotics for Reinforced Stochastic Processes on Hierarchical Networks}
		\runtitle{Asymptotics for Hierarchical Networks}
		
		\begin{aug}
			\author[A]{\fnms{Li}~\snm{Yang}\ead[label=e1]{yangl@xjtu.edu.cn}}
			\author[A]{\fnms{Dandan}~\snm{Jiang}*\ead[label=e2]{jiangdd@mail.xjtu.edu.cn}}
			\author[B]{\fnms{Jiang}~\snm{Hu}\ead[label=e3]{huj156@nenu.edu.cn}}
			\author[B]{\fnms{Zhidong}~\snm{Bai}\ead[label=e4]{baizd@nenu.edu.cn}}
			\address[A]{School of Mathematics and Statistics, Xi'an Jiaotong University\printead[presep={,\ }]{e1,e2}}
			
			\address[B]{KLASMOE and School of Mathematics and Statistics, Northeast Normal University\printead[presep={,\ }]{e3,e4}}
		\end{aug}
		
		\begin{abstract}
        In this paper, we analyze the asymptotic behavior of a system of interacting reinforced stochastic processes $({\bf Z}_n, {\bf N}_n)_n$ on a directed network of $N$ agents. The system is defined by the coupled dynamics ${\bf Z}_{n+1}=(1-r_{n}){\bf Z}_{n}+r_{n}{\bf X}_{n+1}$ and ${\bf N}_{n+1}=(1-\frac{1}{n+1}){\bf N}_n+\frac{1}{n+1}{\bf X}_{n+1}$, where agent actions $\mathbb{P}(X_{n+1,j}=1\mid{\cal F}_n)=\sum_{h} w_{hj}Z_{nh}$ are governed by a column-normalized adjacency matrix ${\bf W}$, and $r_n \sim cn^{-\gamma}$ with $\gamma \in (1/2, 1]$. Existing asymptotic theory has largely been restricted to irreducible and diagonalizable ${\bf W}$. We extend this analysis to the broader and more practical class of reducible and non-diagonalizable matrices ${\bf W}$ possessing a block upper-triangular form, which models hierarchical influence. We first establish synchronization, proving $({\bf Z}^\top_n, {\bf N}^\top_n)^\top \to Z_\infty {\bf 1}$ almost surely, where the distribution of the limit $Z_\infty$ is shown to be determined solely by the internal dynamics of the leading subgroup. Furthermore, we establish a joint central limit theorem for $({\bf Z}_n,{\bf N}_n)_n$, revealing how the spectral properties and Jordan block structure of ${\bf W}$ govern second-order fluctuations. We demonstrate that the convergence rates and the limiting covariance structure exhibit a phase transition dependent on $\gamma$ and the spectral properties of ${\bf W}$. Crucially, we explicitly characterize how the non-diagonalizability of ${\bf W}$ fundamentally alters the asymptotic covariance and introduces new logarithmic scaling factors in the critical case ($\gamma=1$). These results provide a probabilistic foundation for statistical inference on such hierarchical network structures.
		\end{abstract}
		
		\begin{keyword}[class=MSC]
			\kwd[Primary ]{60F05}
			\kwd{60F15}
			\kwd{60K35}
			\kwd[; secondary ]{91D30}
		\end{keyword}

		\begin{keyword}
			\kwd{Multi-urn model}
            \kwd{Hierarchical network}
			\kwd{Interacting systems}
			\kwd{Synchronization}
		\end{keyword}
		
	\end{frontmatter}
	
		\section{Introduction}
    Complex systems composed of interacting components have attracted significant attention across scientific disciplines, owing to their rich theoretical structures and diverse applications (see, e.g., \cite{r11,r12}). In neuroscience, the brain is not merely a network in the anatomical sense but a dynamic and intelligent system for information processing, where network structure directly impacts cognitive function (see, e.g., \cite{r13,r14}). In the life sciences, researchers have utilized multilayer networks to model interactions from molecular to species levels, uncovering fundamental principles that govern the organization and evolution of biological systems (see, e.g., \cite{r15,r16}). Similarly, in economic systems, network-based models of interconnections among financial institutions and firms have enabled the identification of pathways for systemic risk propagation, providing quantitative support for financial stability policies (see, e.g., \cite{r17,r34}). The study of social networks has revealed universal patterns in human relationships and the network-driven mechanisms behind social phenomena like cultural transmission and behavioral diffusion (see, e.g., \cite{r18},\cite{r19},\cite{r20}). Collectively, these findings underscore the power of a network-based perspective in reshaping our understanding of complex systems.

    A notable feature of many such systems is the emergence of similar macroscopic behaviors, a phenomenon known as {\it synchronization}, despite substantial heterogeneity among agents and the complex structure of their interactions.
    Understanding the microscopic mechanisms that generate synchronization is fundamental for both prediction and intervention. This work addresses this challenge by focusing on a particular class of networked stochastic systems driven by {\it reinforcement}, a fundamental feedback mechanism that amplifies frequently occurring events.

    Reinforcement describes the tendency for the probability of an event to increase with the frequency of its past occurrences. It underlies diverse natural and social phenomena, such as the amplification of gene expression in biology, the formation of preferences in economics, and the consolidation of behavioral patterns in social networks. Formally, it is classically modeled by the P\'{o}lya urn process (\cite{r21}), in which drawing a ball and returning it with an additional one of the same color formalizes self-reinforcing feedback. Over the past century, this paradigm has inspired a broad family of reinforced stochastic processes (some variants can be found in \cite{r22},\cite{r23},\cite{r24},\cite{r25},\cite{r35},\cite{r26},\cite{r27},\cite{r28},\cite{r29},\cite{r30},\cite{r31},\cite{r32},\cite{r33}), which in turn have characterized the long-term behavior of these dynamics in a single-agent setting.

    To capture the dynamics of systems composed of multiple interacting components, demands a shift from these single-agent frameworks to multi-agent models. In this context, each agent can be represented by an urn whose composition encodes its internal state, leading naturally to interacting urn systems. A well-studied model is the {\it mean-field interacting urn system}. 
    For example, the reference \cite{r36} investigated a system of countably many exponentially reinforced urns, introducing interactions via a Bernoulli($p$) sampling mechanism. The reference \cite{r37} further developed the theory in a system of interacting urns with mean-field interactions, proving synchronization to and establishing a central limit theorem (CLT) for the empirical average as the number of urns  tends to infinity. Subsequent work within this mean-field paradigm, such as \cite{r38}, examined second-order asymptotics, showing how the convergence rate depends on an interaction parameter $\alpha\in(0,1]$. Additional analyses of interacting systems can be found in \cite{r39}, \cite{r40}, \cite{r41} and \cite{r9}.

    While mean-field models capture global interactions, many systems exhibit more localized and heterogeneous influence patterns. To account for such structures, the paper \cite{r1} proposed a framework in which $N$ reinforced agents interact via a weighted directed graph ${\cal G}=({\cal V},{\cal E})$, with vertex set ${\cal V}=\{1,2,\dots,N\}$ and edge set ${\cal E}\subseteq{\cal V}\times{\cal V}$. The network influence structure is described by a nonnegative, column-normalized adjacency matrix ${\bf W}=[w_{hj}]_{h,j\in{\cal V}}$ satisfying $\sum_{h=1}^N w_{hj}=1$ for all $j$. The diagonal entry $w_{jj}$ quantifies self-reinforcement, while the off-diagonal entries $w_{hj}$ capture the influence exerted by agent $h$ on agent $j$. Each agent $j\in{\cal V}$ is associated with a binary action sequence $(X_{nj})_{n\ge1}\in\{0,1\}$ and an inclination process $(Z_{nj})_{n\ge0}$. Let ${\cal F}_n=\sigma({\bf Z}_0,{\bf X}_1,\dots,{\bf X}_n)$ denote the natural filtration. Conditional on ${\cal F}_n$, the actions at time $n+1$ are independent across agents, with
    \begin{equation}\label{eq49}
	   \mathbb{P}(X_{n+1,j}=1\mid{\cal F}_n)=\sum_{h=1}^N w_{hj}Z_{nh}, \quad j\in{\cal V},
    \end{equation}
    and the inclinations evolve as
    \begin{equation}\label{eq15}
	   Z_{nh}=(1-r_{n-1})Z_{n-1,h}+r_{n-1}X_{nh}, \quad h\in{\cal V},
    \end{equation}
    where $r_n\in[0,1)$ is a decaying step size, with initial state ${\bf Z}_0\in[0,1]^{N}$. Under the assumptions that ${\bf W}$ is irreducible and diagonalizable, and $r_n\sim cn^{-\gamma}$ with $\gamma\in(1/2,1]$, the paper \cite{r1} proved almost sure synchronization of the inclination vector ${\bf Z}_n$ and established its corresponding  CLTs.
    Subsequent developments by \cite{r2} extended the analysis to the joint process of inclinations and empirical actions $({\bf Z}_n,{\bf N}_n)_n$ governed by
    \begin{equation}\label{eq3}
	\begin{cases}
		{\bf Z}_{n+1}=(1-r_{n}){\bf Z}_{n}+r_{n}{\bf X}_{n+1},\\[3pt]
		{\bf N}_{n+1}=\left(1-\dfrac{1}{n+1}\right){\bf N}_n+\dfrac{1}{n+1}{\bf X}_{n+1}.
	\end{cases}
    \end{equation}
    Under the same structural conditions on ${\bf W}$, they obtained almost sure synchronization and corresponding CLTs for this coupled system. 

    Later, the reference \cite{r4} removed the diagonalizability assumption and, under the irreducibility condition on the adjacency matrix, derived necessary and sufficient conditions that fully characterize the first-order asymptotic behavior of \eqref{eq3}. Related work by \cite{r5} investigated asymptotic polarization phenomena, identifying regimes in which the common limiting inclination takes extreme values with positive or zero probability.

    Taken together, previous studies have established a coherent asymptotic theory for irreducible network structures, where the assumption of diagonalizability is essential for deriving second-order results. Yet, as noted in \cite{r4}, the naive diagonalizability assumption is difficult to verify in practice and may limit the range of applicable models.
    In many empirical settings, the underlying network often exhibits reducible or non-diagonalizable structures that capture asymmetric or unidirectional influence, such as those observed in hierarchical organizations (see, e.g., \cite{rr1}, \cite{rr3}, \cite{rr2}). Consequently, the asymptotic behavior of systems with these more general interaction structures remains a largely open question.


    To bridge this gap, this paper develops an asymptotic framework for reinforced dynamics on hierarchical networks. We model such systems by first partitioning the population into $S$ subgroups ${\cal G}_1,\dots,{\cal G}_S$, which induces a hierarchical adjacency matrix ${\bf W}$ of block upper-triangular form:
    \begin{equation}\label{eq12}
    {\bf W}=\begin{pmatrix}
    {\bf W}_{11} & {\bf W}_{12} & \cdots & {\bf W}_{1S}\\
    {\bf 0} & {\bf W}_{22} & \cdots & {\bf W}_{2S}\\
    \vdots & \vdots & \ddots & \vdots\\
    {\bf 0} & {\bf 0} & \cdots & {\bf W}_{SS}
    \end{pmatrix}.
    \end{equation}
    This structure encodes a unidirectional hierarchy: influence propagates from upstream groups $h$ to downstream groups $j$ via the blocks ${\bf W}_{hj}$ ($h<j$), while the zero blocks below the diagonal indicate that downstream groups do not feed back to upstream groups. The leading block ${\bf W}_{11}$ is assumed irreducible, and each downstream diagonal block ${\bf W}_{hh}$ ($h\ge2$) satisfies $\|{\bf W}_{hh}\|_1<1$. The special case $S=1$ recovers the classical irreducible setting of \cite{r1,r2}.

    A central objective of this work is to establish  the first- and second-order asymptotic properties for the joint process $({\bf Z}_n, {\bf N}_n)$ in \eqref{eq3}. We achieve this for hierarchical interaction matrices \eqref{eq12} under a general setting that permits both reducibility and non-diagonalizability. The main contributions are as follows:
	
	a) {\it First-order synchronization.} We develop the first-order asymptotic theory for general reducible hierarchical networks. This extends classical synchronization results, which focused primarily on irreducible structures to settings with top-down influence. We show that the entire system with its downstream subgroups achieves almost sure synchronization. Remarkably, this synchronization exhibits a hierarchical dominance, where the limit $Z_\infty$ is dictated entirely by the dynamics of the leading irreducible subgroup ${\cal G}_1$.
    
	 b) {\it Second-order asymptotics.} We establish the joint CLTs for $({\bf Z}_n,{\bf N}_n)_n$ in the general reducible and non-diagonalizable setting. The spectral characteristics of ${\bf W}$ together with the step-size parameters $(\gamma, c)$ give rise to a dynamic phase transition in the system's second-order behavior, reflected in qualitative changes to the asymptotic covariance and convergence rates. Notably, in our model, the Jordan block structure of ${\bf W}$ modifies the asymptotic covariance, which introduces additional leading components and slows convergence in certain spectral regimes relative to the classical diagonalizable case. Moreover, we explicitly express the limiting covariance matrix as a function of the spectral characteristics of ${\bf W}$, specifically its eigenvalues and the size and structure of the Jordan blocks associated with the corresponding generalized eigenvectors.

    c) {\it Statistical inference for hierarchical networks.} Building on the derived CLTs, we develop a principled framework for statistical inference in hierarchical systems. The framework provides confidence intervals for the synchronization limit $Z_\infty$, confidence regions for structural parameters, and formal hypothesis tests for the adjacency matrix ${\bf W}$. These results provide principled and flexible statistical tools for validation and uncertainty quantification in complex hierarchical networks.
   
   In summary, this work lays a complete asymptotic and statistical foundation for hierarchical reinforced networks, bridging theoretical limits with a practical inference framework that delivers quantifiable uncertainty for both predictions and structural discoveries. Beyond theoretical interest, these results are particularly relevant for real-world systems where behavior evolves under both self-reinforcement and structured interactions. For instance, in social networks, repeated individual choices strengthen personal preferences, while exposure to friends' or opinion leaders' actions shapes collective dynamics. Similarly, in biological systems, decisions or signals propagate along hierarchical pathways, while local feedback loops simultaneously reinforce existing tendencies. By capturing the key interplay between reinforcement and network influence, our framework provides a structured approach to analyzing emergent collective behavior and understanding the dynamics of complex networked systems.

	The remainder of this paper is organized as follows. Section \ref{sec2} introduces the notations and formal assumptions underlying our model. Section \ref{sec3} presents the main theoretical results, including both first- and second-order asymptotic properties of the stochastic process $({\bf Z}_n,{\bf N}_n)_n$. Section \ref{sec4} is devoted to the proofs. Section \ref{sec4.1} outlines the overall proof strategy, and Section \ref{sec4.2} provides detailed proofs for the main results. Building upon this theory, we develop a framework for statistical inference in Section \ref{sec5}, which includes the construction of hypothesis tests for network structures and confidence regions for key parameters. We then use simulation studies in Section \ref{sec7} to illustrate our theoretical findings and explore the behavior of the model under various settings. Technical lemmas and auxiliary results used throughout the paper are collected in Appendix.

	\section{Notation and Assumptions}\label{sec2}
	In this section, we introduce the notation and key assumptions that underpin the analytical framework of this paper. 
	
	In the sequel, we adopt the following notational conventions: random variables are denoted by uppercase letters (e.g., $ X, Z, N \dots$), constants are represented by lowercase letters (e.g., \( a, b, c, \dots \)), vectors and matrices are indicated by bold letters (e.g., $ \mathbf{X}, \mathbf{Z}, \mathbf{N}, \mathbf{W}, \dots$), sets are denoted by calligraphic letters $\{{\cal G},{\cal F}\}$, and functions are represented by script letters $\{\mathbb{E}, \mathbb{P}\}$. Let $(\cdot)^{-1}$, $(\cdot)^\top$, and $\overline{(\cdot)}^\top$ denote the matrix inverse, transpose, and conjugate transpose, respectively. For vectors, $\|\cdot\|$ denotes the $L^2$ norm of a vector. For matrices, $\|\cdot\|_1, \|\cdot\|_2$ and $\|\cdot\|_\infty$ denote  the $L_1$ norm, spectral norm and $L_\infty$ norm, respectively.
	Moreover, we denote by $|\cdot|$ the sum of the modulus of its entries for vectors and matrices. Finally, the notation $f(n)\sim g(n)$ indicates that $\lim_{n\to\infty}f(n)/g(n)=1$.

	Throughout this paper, we need the following assumptions. The first assumption concerns the convergence behavior of the step size sequence $(r_n)_n$.

	\begin{assumption}\label{as1}
		There exist real constants $c>0$ and $1/2<\gamma\le 1$ such that
		\begin{equation}\label{eq11}
			\lim\limits_{n\to\infty}n^\gamma r_n=c.
		\end{equation}
		Furthermore, when $\gamma=1$, we require the following stronger condition for further analyses,
		\begin{equation*}
			nr_n-c=O(n^{-1}).
		\end{equation*}
	\end{assumption}
	
	\begin{assumption}\label{as2}
		The adjacency matrix ${\bf W}$ satisfies the following conditions:
		
		(1) The adjacency matrix $\mathbf{W}$ is column-normalized, i.e., $\sum_{h=1}^N {\bf W}_{hj}=1$ for all $j\in\{1,2,\dots,N\}$.
		
		(2) The submatrix ${\bf W}_{11}$ is irreducible, and $\max_{h\in\{2,\cdots,S\}}\|{\bf W}_{hh}\|_1<1$.
	\end{assumption}

    Under Assumption \ref{as2}, the adjacency matrix ${\bf W}$ has a simple largest eigenvalue, which equals 1. The Jordan decomposition of ${\bf W}$ is given by
    \begin{equation}
	   \widetilde{\bf P}{\bf W}\widetilde{\bf P}^{-1}=\widetilde{\bf J}= 
	\begin{pmatrix}
		1 & \bf 0 & \cdots & \bf 0 \\
		\bf 0 & {\bf J}_1 & \cdots & \bf 0 \\
		\vdots & \vdots & \ddots & \vdots \\
		\bf 0 & \bf 0 & \cdots & {\bf J}_T
	\end{pmatrix},
    \end{equation}
    where the transformation matrices are explicitly defined by their columns as
    \begin{gather*}
	   \widetilde{\bf P}^\top=({\bf p}_1, {\bf p}_2, \dots, {\bf p}_N),\quad
	   \widetilde{\bf Q}=\widetilde{\bf P}^{-1}=({\bf q}_1, {\bf q}_2, \dots, {\bf q}_N),
    \end{gather*}
    The rows of $\widetilde{\bf P}$ are the generalized left eigenvectors, and the columns of $\widetilde{\bf Q}$ are the generalized right eigenvectors.
    Let ${\bf J}=\text{Diag}({\bf J}_1, \dots, {\bf J}_T)$ be the matrix of Jordan blocks associated with the non-dominant spectrum $\mbox{Sp}({\bf W})\setminus\{1\}=\{\lambda_1,\dots,\lambda_T\}$, and let $\rho_t$ be the order of the block ${\bf J}_t$ for $t\in\{1,\cdots,T\}$. For clarity, we define
    \begin{gather*}
	   \tau=\max_{t}\mbox{Re}(\lambda_t),\quad \tau^\ast=\min_{t}\mbox{Re}(\lambda_t),\quad \text{and}\quad 
	   \rho=\max_t\{\rho_t:\mbox{Re}(\lambda_t)=\tau\}.
    \end{gather*}

    We partition the transformation matrices to isolate the dominant eigenvector associated with eigenvalue 1 from the remaining eigenvectors:
    \begin{equation}
	   \widetilde{\bf P}=\begin{pmatrix} {\bf p}_1^\top \\ {\bf P}^\top \end{pmatrix}, \quad
	   \widetilde{\bf Q}=({\bf q}_1,{\bf Q}).
    \end{equation}
    For normalization, we set
    \begin{equation}\label{eq54}
	   {\bf p}_1=N^{-1/2} {\bf 1}.
    \end{equation}
    The identity $\widetilde{\bf Q}\widetilde{\bf P} = {\bf I}$ and ${\bf W} = \widetilde{\bf Q}\widetilde{\bf J}\widetilde{\bf P}$ yield 
    \begin{align}\label{eq55}
        {\bf p}_1^\top {\bf q}_1=1, \quad
        {\bf p}_1^\top {\bf Q}={\bf 0}, \quad
        {\bf P}^\top {\bf q}_1={\bf 0}, \quad
        {\bf P}^\top {\bf Q}={\bf I}.\\
        {\bf I}={\bf q}_1 {\bf p}_1^\top+{\bf Q}{\bf P}^\top,\quad {\bf W}={\bf q}_1{\bf p}_1^\top+{\bf Q}{\bf J}{\bf P}^\top.\non
    \end{align}
    These definitions and identities provide the formal framework for the subsequent analysis.

	\section{Main Results}\label{sec3}
	
	This section presents the main results of this paper, focusing on the first- and second-order convergence properties of the joint process $({\bf Z}_n, {\bf N}_n)_n$. The first-order convergence characterizes synchronization, while the second-order convergence quantifies the convergence rate and synchronization rate.
	
	\subsection{Almost Sure Convergence of $({\bf Z}_n,{\bf N}_n)_n$}
	The first result establishes the strong convergence of the stochastic process $({\bf Z}_n,{\bf N}_n)_n$.
	\begin{theorem}[Synchronization]\label{th7}
		Under Assumptions \ref{as1} and \ref{as2}, there exists a random variable $Z_\infty$ taking values in $[0, 1]$ such that
		\begin{equation}
			\begin{pmatrix}
				{\bf Z}_n\\
				{\bf N}_n
			\end{pmatrix}\overset{a.s.}{\to}Z_\infty{\bf 1}.
		\end{equation}
	\end{theorem}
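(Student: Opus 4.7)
The plan is to reduce hierarchy-wide synchronization to synchronization on the leading irreducible block and then propagate convergence downstream via contraction. Setting $M_n := {\bf q}_1^\top {\bf Z}_n$, the eigenvalue identity ${\bf W}{\bf q}_1 = {\bf q}_1$ combined with $\mathbb{E}[{\bf X}_{n+1}\mid{\cal F}_n] = {\bf W}^\top {\bf Z}_n$ makes $M_n$ a uniformly bounded martingale, so $M_n \to M_\infty$ a.s. Solving ${\bf W}{\bf q}_1 = {\bf q}_1$ block by block under the hierarchical form \eqref{eq12} together with $\|{\bf W}_{hh}\|_1 < 1$ for $h \geq 2$ forces ${\bf q}_1$ to be supported on ${\cal G}_1$; combined with ${\bf p}_1^\top {\bf q}_1 = 1$ and ${\bf p}_1 = N^{-1/2}{\bf 1}$, this identifies the candidate limit $Z_\infty := M_\infty/\sqrt{N} \in [0,1]$. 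Because ${\bf W}$ is block upper triangular, the subprocess $({\bf Z}_n^{(1)}, {\bf N}_n^{(1)})$ on ${\cal G}_1$ is self-contained, driven only by the irreducible block ${\bf W}_{11}$; the first-order synchronization result of \cite{r4}, which requires only irreducibility (not diagonalizability), then yields ${\bf Z}_n^{(1)} \to Z_\infty {\bf 1}$ a.s., with the common limit consistent with $M_\infty/\sqrt{N}$.

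Next, I propagate synchronization to the downstream subgroups by induction on $s \geq 2$. Assuming ${\bf Z}_n^{(h)} \to Z_\infty {\bf 1}$ a.s. for every $h < s$ and setting ${\bf Y}_n := {\bf Z}_n^{(s)} - Z_\infty {\bf 1}$, column-normalization across the $s$th block column of ${\bf W}$ gives
\begin{equation*}
{\bf Y}_{n+1} = {\bf A}_n {\bf Y}_n + r_n {\bf U}_n + r_n \Delta{\bf M}_{n+1},
\end{equation*}
with ${\bf A}_n := (1-r_n){\bf I} + r_n {\bf W}_{ss}^\top$, upstream drift ${\bf U}_n := \sum_{h<s}{\bf W}_{hs}^\top ({\bf Z}_n^{(h)} - Z_\infty {\bf 1}) \to {\bf 0}$ a.s. by the induction hypothesis, and martingale difference $\Delta{\bf M}_{n+1} := {\bf X}_{n+1}^{(s)} - \mathbb{E}[{\bf X}_{n+1}^{(s)}\mid{\cal F}_n]$ bounded in norm. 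The hypothesis $\|{\bf W}_{ss}\|_1 < 1$ gives $\|{\bf A}_n\|_\infty \leq 1 - \alpha r_n$ with $\alpha := 1 - \|{\bf W}_{ss}\|_1 > 0$; since the spectral radius $\rho({\bf W}_{ss}^\top) < 1$, one may also choose a Euclidean norm $\|\cdot\|_{\bf B}$ in which ${\bf W}_{ss}^\top$ is strictly contractive, so $\|{\bf A}_n\|_{\bf B} \leq 1 - \alpha' r_n$ for some $\alpha' > 0$. Unrolling the recursion and applying Abel summation to re-express the moving-weight noise term via the ordinary martingale ${\bf S}_n := \sum_{k=0}^{n-1} r_k \Delta {\bf M}_{k+1}$, the divergence $\sum r_n = \infty$ (from $\gamma \leq 1$) drives contraction of the homogeneous and upstream-drift parts, $\sum r_n^2 < \infty$ (from $\gamma > 1/2$) makes ${\bf S}_n$ an $L^2$-convergent martingale, and a Toeplitz-type averaging of the resulting bounded weights against the vanishing inputs yields ${\bf Y}_n \to {\bf 0}$ a.s.

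Once ${\bf Z}_n \to Z_\infty {\bf 1}$ is established, telescoping the second line of \eqref{eq3} gives ${\bf N}_n = n^{-1}\sum_{k=1}^n {\bf X}_k + O(n^{-1})$; the strong law for uniformly bounded martingale differences applied to ${\bf X}_k - \mathbb{E}[{\bf X}_k\mid{\cal F}_{k-1}]$, combined with Cesaro averaging of $\mathbb{E}[{\bf X}_k\mid{\cal F}_{k-1}] = {\bf W}^\top {\bf Z}_{k-1} \to Z_\infty {\bf W}^\top {\bf 1} = Z_\infty{\bf 1}$ (by column-normalization), yields ${\bf N}_n \to Z_\infty{\bf 1}$ a.s. The main obstacle is the induction step: the noise accumulation in ${\bf Y}_n$ has weights depending on the terminal time $n$, so the classical martingale convergence theorem does not apply directly. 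The Abel summation trick combined with the existence of a strictly contractive quadratic norm circumvents this, and is exactly where the assumption $\gamma > 1/2$ is used.
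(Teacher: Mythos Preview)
Your proposal is correct but follows a genuinely different route than the paper. The paper does not induct block by block through the hierarchy; instead it works with the global spectral decomposition ${\bf Z}_n=\widetilde{Z}_n{\bf 1}+\widehat{\bf Z}_n$, where $\widetilde{Z}_n=N^{-1/2}{\bf q}_1^\top{\bf Z}_n$ is exactly the bounded martingale you identify and $\widehat{\bf Z}_n={\bf P}{\bf Q}^\top{\bf Z}_n$, and shows $\widehat{\bf Z}_n\to{\bf 0}$ all at once via a Robbins--Siegmund almost-supermartingale argument: after modifying the Jordan basis so that $\|{\bf J}_\beta\|_2<1$ (Lemma~\ref{le7}), one obtains $\mathbb{E}[\|{\bf Q}_\beta^\top{\bf Z}_{n+1}\|^2\mid{\cal F}_n]\le(1-ar_n)\|{\bf Q}_\beta^\top{\bf Z}_n\|^2+Cr_n^2$, whence $\|{\bf Q}_\beta^\top{\bf Z}_n\|^2$ converges a.s.\ (by \cite{r8}) and in $L^1$ to zero (by Lemma~\ref{le9}). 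Your approach is more transparent about the hierarchical mechanism---it makes explicit how synchronization cascades from ${\cal G}_1$ downstream---and avoids any Jordan-form manipulation at the first-order stage, at the cost of invoking the irreducible-case result of \cite{r4} as a black box and of a somewhat delicate Abel/Toeplitz treatment of the martingale noise (which is standard but needs care since the weights $B_{n,k}$ are matrix-valued and depend on the terminal time). The paper's approach is self-contained and, importantly, sets up precisely the decomposition $(\widetilde{Z}_n,\widehat{\bf Z}_n,\widehat{\bf N}_n)$ that is reused throughout the second-order analysis. For ${\bf N}_n$ the two arguments coincide: Ces\`aro averaging of $\mathbb{E}[{\bf X}_k\mid{\cal F}_{k-1}]={\bf W}^\top{\bf Z}_{k-1}\to Z_\infty{\bf 1}$ plus a martingale SLLN, which the paper packages via Lemma~\ref{le10}.
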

	This result shows that, regardless of the initial states ${\bf Z}_0$ of agents in the population ${\cal G}$, the proposed interaction dynamics and enhanced decision mechanism ensure that both the agent inclinations and the empirical means converge almost surely to a common limit $Z_\infty$. This implies that the entire population achieves asymptotic synchronization. The asymptotic behavior of $Z_\infty$ is characterized by the following Theorem \ref{th11}, Corollary \ref{co1} and Theorem \ref{th1}.
	
	\begin{theorem}\label{th11}
		Suppose $S\ge2$. Under Assumptions \ref{as1} and \ref{as2}, the distribution of the synchronization limit $Z_\infty$ is determined solely by the interaction structure within the leading subgroup ${\cal G}_1$, and is independent of the remaining subgroups ${\cal G}_k$ for $k\in\{2,\dots,S\}$.
	\end{theorem}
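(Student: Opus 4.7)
The plan is to exploit the autonomy of the leading subgroup induced by the block upper-triangular form \eqref{eq12} of $\mathbf{W}$. First I would observe that for every agent $j \in \mathcal{G}_1$, the block structure forces $w_{hj}=0$ whenever $h \notin \mathcal{G}_1$, so the conditional probability in \eqref{eq49} reduces to $\mathbb{P}(X_{n+1,j}=1\mid\mathcal{F}_n)=\sum_{h\in\mathcal{G}_1} w_{hj} Z_{nh}$. Consequently, the pair $(\mathbf{Z}_n|_{\mathcal{G}_1},\mathbf{N}_n|_{\mathcal{G}_1})$ satisfies a closed system of recursions of the form \eqref{eq3} driven solely by the irreducible, column-stochastic matrix $\mathbf{W}_{11}$, the step-size sequence $(r_n)_n$, the initial state $\mathbf{Z}_0|_{\mathcal{G}_1}$, and the restricted driving noise $(X_{n,j})_{j\in\mathcal{G}_1,\,n\ge 1}$.

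Next, I would apply Theorem~\ref{th7} to this autonomous leading subsystem (which is precisely the $S=1$ specialization corresponding to the classical irreducible setting of \cite{r1,r2}). This produces a random variable $\widetilde{Z}_\infty \in [0,1]$ such that $(\mathbf{Z}_n|_{\mathcal{G}_1},\mathbf{N}_n|_{\mathcal{G}_1}) \to \widetilde{Z}_\infty\mathbf{1}$ almost surely, and whose law is a measurable functional only of $\mathbf{W}_{11}$, $(r_n)_n$, and $\mathbf{Z}_0|_{\mathcal{G}_1}$. By construction this distribution is independent of the off-diagonal blocks $\mathbf{W}_{1k}, \mathbf{W}_{hk}$ for $k\ge 2$, of the downstream diagonal blocks $\mathbf{W}_{kk}$ for $k\ge 2$, and of $\mathbf{Z}_0|_{\mathcal{G}_k}$ for $k\ge 2$.

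To conclude, I would invoke Theorem~\ref{th7} on the full process to get $(\mathbf{Z}_n,\mathbf{N}_n)\to Z_\infty\mathbf{1}$ almost surely. Restricting both sides to the coordinates indexed by $\mathcal{G}_1$ yields the same limit $Z_\infty$ on those coordinates, so by uniqueness of almost-sure limits $Z_\infty = \widetilde{Z}_\infty$ almost surely. Since almost-sure equality implies equality in distribution, the characterization of $\widetilde{Z}_\infty$ from the previous step transfers directly to $Z_\infty$, yielding the claim.

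The argument is essentially structural and presents no serious obstacle; the only point requiring care is the clean verification that the leading subsystem is autonomous and still fits the hypotheses of Theorem~\ref{th7} — specifically, that $\mathbf{W}_{11}$ remains column-stochastic after restriction, which is immediate from \eqref{eq12} and Assumption~\ref{as2}, and that the filtration $\mathcal{F}_n$ can be replaced by the natural filtration of the restricted process without altering the conditional law in \eqref{eq49} for $j\in\mathcal{G}_1$. Everything else reduces to a one-line identification of two almost-sure limits of the same subsequence.
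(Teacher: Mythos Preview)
Your argument is correct and takes a somewhat different route from the paper's. The paper proceeds via the eigenvector structure: it partitions the dominant right eigenvector as $\mathbf{q}_1=(\mathbf{q}_{11}^\top,\dots,\mathbf{q}_{1S}^\top)^\top$, uses the block upper-triangular form together with $\max_{h\ge 2}\|\mathbf{W}_{hh}\|_1<1$ to show $\mathbf{q}_{12}=\cdots=\mathbf{q}_{1S}=\mathbf{0}$, and hence that the martingale $\widetilde{Z}_n=N^{-1/2}\mathbf{q}_1^\top\mathbf{Z}_n$ (whose almost-sure limit defines $Z_\infty$ in the proof of Theorem~\ref{th7}) equals $N^{-1/2}\mathbf{q}_{11}^\top\mathbf{Z}_n^{(1)}$; it then notes the autonomy of $(\mathbf{Z}_n^{(1)})_n$ exactly as you do. Your approach bypasses the spectral step entirely: you apply Theorem~\ref{th7} once to the autonomous $\mathcal{G}_1$-subsystem and once to the full system, then identify the two limits on the $\mathcal{G}_1$ coordinates. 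This is more elementary and self-contained. The paper's route, on the other hand, yields the eigenvector identity $\mathbf{q}_{1k}=\mathbf{0}$ for $k\ge 2$ as a by-product, which is precisely what is needed to derive the explicit formula $\mathbb{E}[Z_\infty]=N^{-1/2}\mathbf{q}_{11}^\top\mathbb{E}[\mathbf{Z}_0^{(1)}]$ of Corollary~\ref{co1}; your argument would require that eigenvector computation separately if you also wanted the corollary.
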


	The first moment of the synchronization limit $Z_\infty$ is given by the following corollary.
	
	\begin{corollary}\label{co1}
		Under Assumptions \ref{as1} and \ref{as2}, the mathematical expectation of the synchronization limit, $\mathbb{E}[Z_\infty]$, is a weighted average of the initial states of group ${\cal G}_1$, given by
		\begin{equation}
			\mathbb{E}(Z_\infty)=N^{-1/2}{\bf q}^\top_{11}\mathbb{E}({\bf Z}^{(1)}_0),
		\end{equation}
		where ${\bf Z}^{(1)}_n$ denotes the agent inclination corresponding to the subgroup ${\cal G}_1$, and ${\bf q}_{11}$ is the right eigenvector of ${\bf W}_{11}$ corresponding to the eigenvalue $1$.
	\end{corollary}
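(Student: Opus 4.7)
The key structural observation is that the block upper-triangular form of ${\bf W}$, combined with column-normalization, makes the dynamics of the leading subgroup autonomous: for every $j\in{\cal G}_1$ one has $w_{hj}=0$ whenever $h\notin{\cal G}_1$, so \eqref{eq49} collapses to $\mathbb{P}(X_{n+1,j}=1\mid{\cal F}_n)=({\bf W}_{11}^{\top}{\bf Z}_n^{(1)})_j$. Consequently $({\bf Z}_n^{(1)})_n$ evolves as a closed, classical irreducible system driven by ${\bf W}_{11}$, and Theorem~\ref{th7} gives ${\bf Z}_n^{(1)}\to Z_\infty{\bf 1}_{N_1}$ almost surely. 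The plan is to extract a deterministic conservation law for the expectation of ${\bf Z}_n^{(1)}$ along the direction ${\bf q}_{11}$ and then pass to the limit.

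\textbf{Key steps.} Taking expectations in \eqref{eq15} restricted to ${\cal G}_1$ yields
\begin{equation*}
\mathbb{E}({\bf Z}_{n+1}^{(1)})=\bigl[(1-r_n){\bf I}+r_n{\bf W}_{11}^{\top}\bigr]\mathbb{E}({\bf Z}_n^{(1)}).
\end{equation*}
Left-multiplying by ${\bf q}_{11}^{\top}$ and using ${\bf W}_{11}{\bf q}_{11}={\bf q}_{11}$, which is equivalent to ${\bf q}_{11}^{\top}{\bf W}_{11}^{\top}={\bf q}_{11}^{\top}$, I obtain the conservation identity
\begin{equation*}
{\bf q}_{11}^{\top}\mathbb{E}({\bf Z}_n^{(1)})={\bf q}_{11}^{\top}\mathbb{E}({\bf Z}_0^{(1)}),\qquad n\ge 0.
\end{equation*}
Since $\|{\bf Z}_n^{(1)}\|_\infty\le 1$, the bounded convergence theorem applied to the almost-sure synchronization ${\bf Z}_n^{(1)}\to Z_\infty{\bf 1}_{N_1}$ gives $\mathbb{E}({\bf Z}_n^{(1)})\to\mathbb{E}(Z_\infty){\bf 1}_{N_1}$, so passing to the limit in the conservation identity yields
\begin{equation*}
\mathbb{E}(Z_\infty)=\frac{{\bf q}_{11}^{\top}\mathbb{E}({\bf Z}_0^{(1)})}{{\bf q}_{11}^{\top}{\bf 1}_{N_1}}.
\end{equation*}

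\textbf{Normalization bookkeeping.} To recover the stated prefactor I need ${\bf q}_{11}^{\top}{\bf 1}_{N_1}=N^{1/2}$. This is inherited from the global conventions in Section~\ref{sec2}. Writing the dominant right eigenvector of ${\bf W}$ in block form ${\bf q}_1=(({\bf q}_1^{(1)})^{\top},\ldots,({\bf q}_1^{(S)})^{\top})^{\top}$ and solving ${\bf W}{\bf q}_1={\bf q}_1$ from the bottom block upward, Assumption~\ref{as2}(2) gives $\|{\bf W}_{hh}\|_1<1$, hence $({\bf I}-{\bf W}_{hh})$ is invertible for $h\ge 2$, which forces ${\bf q}_1^{(h)}={\bf 0}$ recursively for $h=S,S-1,\ldots,2$. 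Thus ${\bf q}_1=({\bf q}_{11}^{\top},{\bf 0},\ldots,{\bf 0})^{\top}$, and the normalization ${\bf p}_1^{\top}{\bf q}_1=1$ in \eqref{eq55} with ${\bf p}_1=N^{-1/2}{\bf 1}_N$ from \eqref{eq54} becomes $N^{-1/2}{\bf 1}_{N_1}^{\top}{\bf q}_{11}=1$, giving exactly ${\bf q}_{11}^{\top}{\bf 1}_{N_1}=N^{1/2}$.

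\textbf{Main obstacle.} The dynamical content of the proof is essentially a one-line conservation argument once synchronization (Theorem~\ref{th7}) and the decoupling of the leading subgroup are in hand, so there is no serious analytical obstruction. The only delicate point is the normalization: the factor $N^{-1/2}$ in the statement implicitly fixes ${\bf q}_{11}$ by inheritance from the global eigenvector ${\bf q}_1$ rather than by any intrinsic ${\bf W}_{11}$-normalization, and making this identification explicit through the block reduction of ${\bf q}_1$ is the step where care is required.
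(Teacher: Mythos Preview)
Your proof is correct and essentially identical to the paper's argument: both use the block structure of ${\bf q}_1$ (zero components outside ${\cal G}_1$), the martingale/conservation property of ${\bf q}_{11}^{\top}{\bf Z}_n^{(1)}$, and dominated convergence to pass to the limit. The only cosmetic difference is that the paper starts from the global martingale $\widetilde{Z}_n=N^{-1/2}{\bf q}_1^{\top}{\bf Z}_n$ and then reduces, so the factor $N^{-1/2}$ is built in from the outset, whereas you work directly in ${\cal G}_1$ and recover the normalization ${\bf q}_{11}^{\top}{\bf 1}_{N_1}=N^{1/2}$ explicitly at the end.
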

	\begin{remark}
		This corollary clarifies how the synchronization limit is formed on average. The expected limit $\mathbb{E}(Z_\infty)$ is a weighted average of the initial states in the leading subgroup ${\cal G}_1$. The weights for this average are the components of the normalized vector $N^{-1/2}{\bf q}^\top_{11}$. The related vector ${\bf q}^\top_{11}$ is the dominant right eigenvector of ${\bf W}_{11}$ and represents the relative intrinsic influence of each agent within that group. Therefore, the initial states of more influential agents in the leading group have a greater impact on the synchronization limit of the entire network.
	\end{remark}

	\begin{remark}
		When $S=1$, i.e., ${\bf W}={\bf W}_{11}$, the properties of $Z_\infty$ coincide with those established in Theorem {\rm 3.1} of {\rm \cite{r1}}.
	\end{remark}
	
    Under the classical assumptions of irreducibility and diagonalizability, the prior work \cite{r1} established two key properties of the synchronization limit $Z_\infty$. While our setting, which focuses on the more complex hierarchical structure \eqref{eq12}, relaxes these assumptions, Theorem~\ref{th11} reveals a fundamental insight: the distribution of $Z_\infty$ is solely governed by the leading subgroup ${\cal G}1$ via the submatrix ${\bf W}_{11}$. Consequently, the following  Theorem~\ref{th1} demonstrates that these same asymptotic properties emerge naturally even in this generalized context. This shows that the foundational laws identified in \cite{r1} are not artifacts of their idealized assumptions but are, in fact, robust phenomena driven primarily by the network's leading echelon.

	\begin{theorem}\label{th1}
		Under Assumptions \ref{as1} and \ref{as2}, the following holds: 
		
		{\rm (a)} If the initial state set ${\bf Z}_0$ satisfies
		\begin{equation}\label{eq60}
			{\mathbb P}\bigg(\bigcap\limits_{j=1}^{N_1}\{Z_{0,j}=0\}\bigg)+{\mathbb P}\bigg(\bigcap\limits_{j=1}^{N_1}\{Z_{0,j}=1\}\bigg)<1,
		\end{equation}
		where $N_1$ denotes the order of the matrix ${\bf W}_{11}$, corresponding to the number of agents in the leading subgroup ${\cal G}_1$. Then, the limit of synchronization $Z_\infty$ satisfies ${\mathbb P}(Z_\infty=0)+{\mathbb P}(Z_\infty=1)<1$.
		
		{\rm (b)} ${\mathbb P}(Z_\infty=z)=0$ for any $z\in(0,1)$.
	\end{theorem}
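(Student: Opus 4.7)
My plan is to reduce both parts of Theorem~\ref{th1} to the dynamics of the leading subgroup ${\cal G}_1$ and then invoke the analogous properties established in the classical irreducible setting of \cite{r1}.

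By Theorem~\ref{th11}, the distribution of $Z_\infty$ is determined entirely by the autonomous subsystem $({\bf Z}_n^{(1)})_n$ on ${\cal G}_1$, whose dynamics are governed by the irreducible, column-stochastic matrix ${\bf W}_{11}$ (column-stochasticity being inherited from ${\bf W}$ via the block upper-triangular form \eqref{eq12}). This matches the classical framework of \cite{r1}, with the full network replaced by ${\cal G}_1$, and condition \eqref{eq60} is precisely the non-degeneracy hypothesis on the initial state of the reduced system. Hence both statements reduce to verifying that the proof of \cite{r1}, Theorem~3.1 carries over to our setting, in which ${\bf W}_{11}$ need not be diagonalizable.

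The key analytic object is the bounded scalar martingale $M_n = {\bf q}_{11}^\top {\bf Z}_n^{(1)}$, where ${\bf q}_{11}$ is the right Perron eigenvector of ${\bf W}_{11}$ inherited from ${\bf q}_1$ via the block structure of Section~\ref{sec2}. The identity ${\bf q}_{11}^\top {\bf W}_{11}^\top = {\bf q}_{11}^\top$ gives $\mathbb{E}[M_{n+1}\mid{\cal F}_n]=M_n$; boundedness yields $M_n\to M_\infty$ a.s.\ and in $L^2$, and Theorem~\ref{th7} together with \eqref{eq54} identifies $M_\infty=N^{1/2}Z_\infty$. Since this construction rests only on the Perron--Frobenius structure of ${\bf W}_{11}$, diagonalizability plays no role, and the arguments of \cite{r1} transfer essentially unchanged.

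For part~(a) I follow the contraposition strategy of \cite{r1}: assuming $\mathbb{P}(Z_\infty\in\{0,1\})=1$, a combination of martingale convergence, a conditional Borel--Cantelli argument applied to the Bernoulli innovations $X_{n+1,j}^{(1)}-p_{n,j}$ (with $p_{n,j}=({\bf W}_{11}^\top{\bf Z}_n^{(1)})_j$), and a graph-theoretic propagation using irreducibility of ${\bf W}_{11}$ forces ${\bf Z}_0^{(1)}\in\{{\bf 0},{\bf 1}\}$ almost surely, contradicting \eqref{eq60}. For part~(b) I adapt the atomless argument: if $\mathbb{P}(Z_\infty=z_0)>0$ for some $z_0\in(0,1)$, Lévy's upward theorem combined with the Markov property expresses $\mathbb{P}(Z_\infty=z_0\mid{\cal F}_n)$ as $g_n({\bf Z}_n^{(1)})$ for some measurable $g_n$, which must tend to $\mathbf{1}_{\{Z_\infty=z_0\}}$ almost surely. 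On $\{Z_\infty=z_0\}$, however, $p_{n,j}\to z_0\in(0,1)$ keeps the conditional Bernoulli noise genuinely random, so iterating the one-step transition kernel spreads the conditional law of $Z_\infty$ over a neighborhood of $z_0$, contradicting the atom.

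The main technical obstacle is part~(b) in the critical case $\gamma=1$: each innovation contributes only $O(n^{-1})$ to $M_n$, so the spreading argument above must be made quantitative. This requires careful control of the quadratic-variation tail $\sum_{k\ge n}r_k^2\asymp n^{1-2\gamma}$ together with the fluctuation bounds that underlie the CLTs proved later in the paper.
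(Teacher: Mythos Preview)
Your proposal is correct and essentially matches the paper's approach: both reduce to the leading irreducible block and defer to \cite{r1}, noting that diagonalizability is irrelevant because only the scalar martingale $\widetilde Z_n=N^{-1/2}{\bf q}_1^\top{\bf Z}_n$ (your $M_n$ up to scaling, since ${\bf q}_1=({\bf q}_{11}^\top,{\bf 0})^\top$) enters the argument. The paper's proof is in fact terser than yours---it simply cites Theorems~3.5 and~3.6 of \cite{r1} (not Theorem~3.1) without the sketch you provide, and your concern about the critical case $\gamma=1$ in part~(b) is not raised there and is already absorbed by the arguments of \cite{r1}.
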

	Part (a) asserts that if the initial states of all agents in ${\cal G}_1$ are not almost surely degenerate (i.e., not all 0 or all 1), then ${\mathbb P}(Z_\infty\in(0,1))>0$. Part (b) shows that $Z_\infty$ has no point masses within the interval $(0,1)$. These properties imply that $Z_\infty$ remains genuinely random. In the following, we will study the second-order convergence of $({\bf Z}_n, {\bf N}_n)_n$, where the limiting covariance structure depends on $Z_\infty$. Consequently, the second-order fluctuations of $({\bf Z}_n,{\bf N}_n)_n$ are governed by a non-degenerate, stochastic covariance matrix.

	\subsection{Central Limit Theorem for $({\bf Z}_n,{\bf N}_n)_n$}
	For clarity in subsequent discussions, we define the cumulative sum
	\begin{equation}
		{\cal I}_0=0,\quad \text{and}\quad {\cal I}_t = \sum_{k=1}^{t} \rho_k \quad \text{for \  } t\in\{1,2,\dots,T\}.
	\end{equation}
	
	We begin by analyzing the convergence rate of the process $({\bf Z}_n, {\bf N}_n)_n$ in the regime $1/2<\gamma<1$.
	\begin{theorem}[Convergence Rate for $1/2<\gamma<1$]\label{th3}
		Under Assumptions \ref{as1} and \ref{as2}, when $N\ge1$, $1/2<\gamma<1$, it holds that:
		\begin{equation*}
			n^{\gamma-\frac{1}{2}}\begin{pmatrix}
				{\bf Z}_n-Z_\infty{\bf 1}\\ {\bf N}_n-Z_\infty{\bf 1}
			\end{pmatrix}\to{\cal N}\left({\bf 0},Z_\infty(1-Z_\infty)\begin{pmatrix}
				\widetilde{\bm\Sigma}_\gamma & \;\;\widetilde{\bm\Sigma}_\gamma\\
				\widetilde{\bm\Sigma}_\gamma & \;\;\widetilde{\bm\Sigma}_\gamma+\widehat{\bm\Gamma}_\gamma
			\end{pmatrix}\right)\ \ \ \ \ \ \text{\textit{stably}},
		\end{equation*}
		where
		\begin{equation}\label{eq4}
			\widetilde{\bm \Sigma}_\gamma=\widetilde{\sigma}^2_\gamma{\bf 1}{\bf 1}^\top\ \ {\rm and}\ \ \widetilde{\sigma}^2_\gamma=\frac{c^2\|{\bf q}_1\|^2}{N(2\gamma-1)},
		\end{equation}
		and
		\begin{equation}\label{eq63}
			\widehat{\bm\Gamma}_\gamma=\widehat{\sigma}^2_\gamma{\bf 1}{\bf 1}^\top\ \ {\rm and}\ \ \widehat{\sigma}^2_\gamma=\frac{c^2\|{\bf q}_1\|^2}{N(3-2\gamma)}.
		\end{equation}
	\end{theorem}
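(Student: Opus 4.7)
The plan is to reduce the joint CLT to a one-dimensional stable martingale CLT along the Perron direction $\mathbf{p}_1 = N^{-1/2}\mathbf{1}$ of $\mathbf{W}^\top$, and to show that all complementary Jordan directions vanish under the scaling $n^{\gamma-1/2}$. I would first introduce the innovation $\mathbf{M}_{n+1} := \mathbf{X}_{n+1} - \mathbf{W}^\top \mathbf{Z}_n$, a martingale difference whose conditional covariance converges, by Theorem~\ref{th7} and the conditional Bernoulli structure of $\mathbf{X}_{n+1}$, almost surely to $Z_\infty(1-Z_\infty)\mathbf{I}$. Using the transposed decomposition $\mathbf{I} = \mathbf{p}_1\mathbf{q}_1^\top + \mathbf{P}\mathbf{Q}^\top$, I split $\mathbf{Z}_n - Z_\infty \mathbf{1}$ and $\mathbf{N}_n - Z_\infty \mathbf{1}$ into components along $\mathbf{p}_1$ and in $\mathrm{range}(\mathbf{P})$. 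The transition operator $(1-r_k)\mathbf{I} + r_k\mathbf{W}^\top$ preserves this splitting; on the complement it acts via $(1-r_k)\mathbf{I} + r_k\mathbf{J}^\top$, and standard estimates for powers of Jordan blocks yield product-norm bounds carrying an exponential factor $\exp\bigl(-(1-\tau)\sum_{j} r_j\bigr)$. Since $\sum r_j \sim c n^{1-\gamma}/(1-\gamma) \to \infty$ for $\gamma \in (1/2, 1)$, this exponential decay dominates the at-most-polynomial growth (of order $n^{\rho-1}$) generated by the nilpotent parts, so the complementary components are $o_{\mathbb{P}}(n^{-(\gamma-1/2)})$ and may be discarded.

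On the dominant scalar $M_n := \mathbf{q}_1^\top \mathbf{Z}_n$, the identity $\mathbf{W}\mathbf{q}_1 = \mathbf{q}_1$ makes $(M_n)$ a martingale with increments $\Delta M_{n+1} = r_n \mathbf{q}_1^\top \mathbf{M}_{n+1}$ and limit $M_\infty = N^{1/2} Z_\infty$ (by Theorem~\ref{th7}), whence
\begin{equation*}
\mathbf{q}_1^\top(\mathbf{Z}_n - Z_\infty \mathbf{1}) \;=\; -\sum_{j\geq n} r_j\,\mathbf{q}_1^\top \mathbf{M}_{j+1}.
\end{equation*}
For the empirical mean I would write $\mathbf{N}_n - Z_\infty\mathbf{1} = n^{-1}\sum_{k=1}^n \mathbf{M}_k + n^{-1}\sum_{k=1}^n \mathbf{W}^\top(\mathbf{Z}_{k-1} - Z_\infty\mathbf{1})$ and project onto $\mathbf{q}_1$: the first sum is an $O_{\mathbb{P}}(n^{-1/2})$ martingale, negligible after $n^{\gamma-1/2}$-scaling since $\gamma < 1$, while Abel summation applied to the second sum, combined with the tail representation above, produces
\begin{equation*}
\mathbf{q}_1^\top(\mathbf{N}_n - Z_\infty \mathbf{1}) \;=\; (M_n - M_\infty) \;-\; \frac{1}{n}\sum_{j=0}^{n-1}(j+1)\,r_j\,\mathbf{q}_1^\top \mathbf{M}_{j+1} \;+\; o_{\mathbb{P}}(n^{-(\gamma-1/2)}).
\end{equation*}
Crucially, the two leading pieces depend on the \emph{disjoint} innovation windows $\{\mathbf{M}_{j+1}: j\geq n\}$ and $\{\mathbf{M}_{j+1}: j<n\}$ and are therefore asymptotically orthogonal. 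This dictates the block structure of the limiting covariance: the $\mathbf{Z}$-piece contributes the common block $\widetilde{\bm\Sigma}_\gamma$ to both the $\mathbf{Z}$-diagonal block and the cross-block, while the past-dependent Abel piece supplies the extra variance $\widehat{\bm\Gamma}_\gamma$ only to the $\mathbf{N}$-block.

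The joint CLT then follows from a stable martingale CLT applied to the two orthogonal pieces jointly. The predictable quadratic variations reduce to $\mathbb{E}[(\mathbf{q}_1^\top \mathbf{M}_{j+1})^2\mid\mathcal{F}_j] \to \|\mathbf{q}_1\|^2 Z_\infty(1-Z_\infty)$ combined with the elementary asymptotics
\begin{equation*}
n^{2\gamma-1}\sum_{j\geq n} r_j^2 \;\to\; \frac{c^2}{2\gamma-1}, \qquad n^{2\gamma-3}\sum_{j=0}^{n-1}(j+1)^2 r_j^2 \;\to\; \frac{c^2}{3-2\gamma},
\end{equation*}
while the conditional Lindeberg condition is immediate from $|\Delta M_{n+1}| \leq C r_n \to 0$. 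Restoring the vector scale through $\mathbf{p}_1\mathbf{p}_1^\top = N^{-1}\mathbf{1}\mathbf{1}^\top$ produces exactly the stated block-covariance matrix, with the random factor $Z_\infty(1-Z_\infty)$ entering naturally through stable convergence. The main technical obstacle is the Jordan-block estimate justifying the discard of the complementary component: in the non-diagonalizable setting one must verify, via explicit binomial expansions of the powers of $(1-r_k)\mathbf{I} + r_k\mathbf{J}^\top$, that the $n^{\rho-1}$ polynomial growth from the nilpotent parts is uniformly absorbed by the super-polynomial factor $\exp(-(1-\tau)\sum r_j)$. This is the only point in the subcritical regime $\gamma < 1$ where the non-diagonalizability of $\mathbf{W}$ requires genuinely new analysis beyond the diagonalizable results of \cite{r1,r2}.
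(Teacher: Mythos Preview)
Your overall strategy coincides with the paper's: project onto the Perron direction and its Jordan complement, discard the complementary piece $\widehat{\mathbf Z}_n=\mathbf P\mathbf Q^\top\mathbf Z_n$ at the scale $n^{\gamma-1/2}$, and run a stable martingale CLT on the scalar $\mathbf q_1^\top\mathbf Z_n$ together with the analogous empirical-mean piece. Your Abel-summation identity for $\mathbf q_1^\top(\mathbf N_n-Z_\infty\mathbf 1)$ and the resulting disjoint future/past innovation windows are exactly what underlies the paper's Theorems~\ref{th8} and~\ref{th9}(b); your orthogonality reading of the block covariance is the concrete content of the paper's more abstract combination step via Lemma~\ref{le11}.

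There is, however, a real gap in your justification for discarding $\widehat{\mathbf Z}_n$. The exponential bound on the Jordan product $\prod_j[(1-r_j)\mathbf I+r_j\mathbf J^\top]$ controls only the \emph{homogeneous} term $\mathbf A_{m_0,n}\widehat{\mathbf Z}_{m_0}$ in the representation
\[
\widehat{\mathbf Z}_{n+1}=\mathbf A_{m_0,n}\widehat{\mathbf Z}_{m_0}+\sum_{k=m_0}^{n}r_k\,\mathbf A_{k+1,n}\mathbf P\mathbf Q^\top\Delta\mathbf M_{k+1};
\]
it does not bound the noise-driven sum, since for $k$ close to $n$ the propagator $\mathbf A_{k+1,n}$ is of order one and no exponential factor is available. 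What is actually needed is the second-moment estimate $\sum_k r_k^2\|\mathbf A_{k+1,n}\|^2=O(n^{-\gamma})$, which is precisely the work done in the paper's Theorem~\ref{th9}(a) via Lemmas~\ref{le8} and~\ref{le12}, yielding $\widehat{\mathbf Z}_n=O_P(n^{-\gamma/2})$ and hence $n^{\gamma-1/2}\widehat{\mathbf Z}_n\to 0$. Your closing paragraph locates the Jordan-block difficulty in the nilpotent growth of $\mathbf A_{m_0,n}$, but that is the easy part (and indeed super-polynomially small); the genuine non-diagonalizable analysis sits in the variance of the inhomogeneous sum, where the off-diagonal entries contribute the $(n^{1-\gamma}-k^{1-\gamma})^q$ factors of Lemma~\ref{le13} that must be integrated against $r_k^2$. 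The same moment bound is also what you need, and do not state, to kill the complementary piece of $\mathbf N_n$ (through $n^{-1}\sum_k\mathbf J^\top\mathbf Q^\top\mathbf Z_{k-1}$).
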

	
	\begin{remark}
		Based on the linear invariance of the normal distribution, the asymptotic normality of $\begin{pmatrix}
			Z_{ni}-Z_{nj}\\ N_{ni}-N_{nj}
		\end{pmatrix}$ can be directly derived from the asymptotic normality of $\begin{pmatrix}
			{\bf Z}_n-Z_\infty{\bf 1}\\ {\bf N}_n-Z_\infty{\bf 1}
		\end{pmatrix}$. Let ${\bf e}_i$ and ${\bf e}_j$ be $N$ dimensional vectors where the $i$th and $j$th components are 1, respectively, and all other components are 0. Since
		\begin{equation*}
			\begin{pmatrix}
				Z_{ni}-Z_{nj}\\ N_{ni}-N_{nj}
			\end{pmatrix}=\begin{pmatrix}
				{\bf e}^\top_i-{\bf e}^\top_j & {\bf 0}\\
				{\bf 0} & {\bf e}^\top_i-{\bf e}^\top_j
			\end{pmatrix}\begin{pmatrix}
				{\bf Z}_n-Z_\infty{\bf 1}\\ {\bf N}_n-Z_\infty{\bf 1}
			\end{pmatrix}
		\end{equation*}
		and	
		\begin{equation*}
			\begin{pmatrix}
				{\bf e}^\top_i-{\bf e}^\top_j & {\bf 0}\\
				{\bf 0} & {\bf e}^\top_i-{\bf e}^\top_j
			\end{pmatrix}\begin{pmatrix}
				\widetilde{\bm\Sigma}_\gamma & \;\;\widetilde{\bm\Sigma}_\gamma\\
				\widetilde{\bm\Sigma}_\gamma & \;\;\widetilde{\bm\Sigma}_\gamma+\widehat{\bm\Gamma}_\gamma
			\end{pmatrix}\begin{pmatrix}
				{\bf e}_i-{\bf e}_j & {\bf 0}\\
				{\bf 0} & {\bf e}_i-{\bf e}_j
			\end{pmatrix}=0,
		\end{equation*}
		it follows that when $1/2<\gamma<1$, the synchronization rate between any two agents in the population is faster than population synchronization rate $n^{\gamma-1/2}$, as subsequently detailed in Theorem \ref{th4}.
	\end{remark}
	
	We now consider the regime $\gamma=1$. For $N=1$, the convergence of the process $({\bf Z}_n, {\bf N}_n)_n$ has been established in Theorem 3.3 of \cite{r2}. When $N\ge2$, the second-order asymptotic behavior of $({\bf Z}_n,{\bf N}_n)_n$ is governed by $\tau$, the second largest real part among the eigenvalues of ${\bf W}$. Different values of $\tau$ yield distinct convergence rates. The following result addresses the regime where $\tau<1-(2c)^{-1}$.

	\begin{theorem}[Convergence Rate for $\gamma=1$, $\tau<1-(2c)^{-1}$]\label{th5}
		Under Assumptions \ref{as1} and \ref{as2}, when $N\ge2$, $\gamma=1$ and $\tau<1-(2c)^{-1}$, we have
		\begin{equation*}
			\sqrt{n}\begin{pmatrix}
				{\bf Z}_n-Z_\infty{\bf 1}\\ {\bf N}_n-Z_\infty{\bf 1}
			\end{pmatrix}\to{\cal N}\left({\bf 0},Z_\infty(1-Z_\infty)\begin{pmatrix}
				\widetilde{\bm\Sigma}_1+\widehat{\bm\Sigma}_{\bf ZZ} & \;\;\widetilde{\bm\Sigma}_1+\widehat{\bm\Sigma}_{\bf ZN}\\
				\widetilde{\bm\Sigma}_1+\widehat{\bm\Sigma}^\top_{\bf ZN} & \;\;\widetilde{\bm\Sigma}_1+\widehat{\bm\Sigma}_{\bf NN}
			\end{pmatrix}\right)\ \ \ \ \ \ \text{\textit{stably}},
		\end{equation*}
		where $\widetilde{\bm\Sigma}_1$ is given by \eqref{eq4} with $\gamma=1$. For any $u, v \in \{1, 2, \dots, T\}$, and for indices $i$, $j$ satisfying ${\cal I}_{u-1}< i \le {\cal I}_u$ and ${\cal I}_{v-1}< j \le {\cal I}_v$, respectively, the matrix $\widehat{\bm \Sigma}_{\bf ZZ}$ is given by
		\begin{align}\label{eq1}
			\widehat{\bm \Sigma}_{\bf ZZ}=&{\bf P}\widehat{\bf S}_{\bf ZZ}{\bf P}^\top,\ \ {\rm and}\\
			[\widehat{\bf S}_{\bf ZZ}]_{i,j}=&\sum\limits_{s=0}^{j-{\cal I}_{v-1}-1}\sum\limits_{t=0}^{i-{\cal I}_{u-1}-1}\frac{c^{t+s+2}(t+s)!}{[-1+c(2-\lambda_u-\lambda_v)]^{t+s+1}}{\bf q}^\top_{i-t+1}{\bf q}_{j-s+1}.\non
		\end{align}
		The matrix $\widehat{\bm\Sigma}_{\bf ZN}$ is given by
		\begin{align}\label{eq65}
			&\qquad \widehat{\bm \Sigma}_{\bf ZN}={\bf P}\widehat{\bf S}_{\bf ZN}\widetilde{\bf P}^\top,\ \ {\rm and}\\
			&[\widehat{\bm S}_{\bf ZN}]_{i,1}=(1-c)\sum\limits_{t=0}^{i-{\cal I}_{u-1}-1}\frac{t!}{(1-\lambda_u)^{t+1}}{\bf q}^\top_{i-t+1}{\bf q}_{1},\non\\
			&[\widehat{\bm S}_{\bf ZN}]_{i,j+1}=\sum\limits_{s=1}^{j-{\cal I}_{v-1}-1}\sum\limits_{t=0}^{i-{\cal I}_{u-1}-1}{\bf q}^\top_{i-t+1}{\bf q}_{j-s+1}c^{t+s+1}\cdot\non\\
			&\qquad\qquad\bigg\{\frac{(t+s-1)!{\cal N}_2(t+s-1,\lambda_u,\lambda_v,c)}{{\cal D}_2(t+s-1,t+s,\lambda_u,\lambda_v,c)}+\frac{\lambda_vc(t+s)!{\cal N}_2(t+s,\lambda_u,\lambda_v,c)}{{\cal D}_2(t+s,t+s+1,\lambda_u,\lambda_v,c)}   \bigg\}\non\\
			&\qquad+\sum\limits_{t=0}^{i-{\cal I}_{u-1}-1}{\bf q}^\top_{i-t+1}{\bf q}_{j+1}c^{t+1}t!\frac{(c-1){\cal N}_2(t,\lambda_u,\lambda_v,c)+{\cal N}_3(t,\lambda_u,c)}{{\cal D}_2(t,t+1,\lambda_u,\lambda_v,c)}.\non
		\end{align}
		And the matrix $\widehat{\bm \Sigma}_{\bf NN}$ is given by
		\begin{align}\label{eq66}
			&\qquad\widehat{\bm \Sigma}_{\bf NN}=\widetilde{\bf P}\widehat{\bf S}_{\bf NN}\widetilde{\bf P}^\top,\ \ {\rm and}\\
			&[\widehat{\bf S}_{\bf NN}]_{1,1}=(c-1)^2\|{\bf q}_1\|^2,  \non\\
			&[\widehat{\bf S}_{\bf NN}]_{1,j+1}=[\widehat{\bm S}_{\bf NN}]_{j+1,1}={\bf q}^\top_{j+1}{\bf q}_1\frac{1-c}{1-\lambda_v}+
			\sum\limits_{s=1}^{j-{\cal I}_{v-1}-1}{\bf q}^\top_{j-s+1}{\bf q}_1c^{s-1}(c^{-1}-1)\cdot\non\\ 
            &\qquad\qquad\qquad\qquad\qquad\qquad\left\{c^2(s-1)!\frac{{\cal N}_4(s-1,\lambda_v,c)}{{\cal N}_3(s-1,\lambda_v,c)}+c^3\lambda_vs!\frac{{\cal N}_4(s,\lambda_v,c)}{{\cal N}_3(s,\lambda_v,c)}\right\}.\non\\
			&[\widehat{\bm S}_{\bf NN}]_{i+1,j+1}=\sum\limits_{t=1}^{i-{\cal I}_{u-1}-1}\sum\limits_{s=1}^{j-{\cal I}_{v-1}-1}{\bf q}^\top_{i-t+1}{\bf q}_{j-s+1} c^{t+s}\big\{{\cal H}(t+s-2,\lambda_u,\lambda_v,c;1,1,0) \non\\
			&\qquad\qquad\qquad\qquad\qquad\qquad + c(\lambda_u+\lambda_v){\cal H}(t+s-1, \lambda_u, \lambda_v,c;1,1,0)\non\\
            &\qquad\qquad\qquad\qquad\qquad\qquad + c^2\lambda_u\lambda_v{\cal H}(t+s, \lambda_u,\lambda_v,c;1,1,0)\big\} \non\\
			&+\sum\limits_{s=1}^{j-{\cal I}_{v-1}-1}{\bf q}^\top_{i+1}{\bf q}_{j-s+1} c^{s+1}\big\{{\cal H}(s-1,\lambda_v,\lambda_u,c;(1-c^{-1}),(1-c^{-1}),c^{-1}) \non\\
			&\qquad\qquad\qquad\qquad\qquad\qquad+\lambda_v{\cal H}(s,\lambda_v,\lambda_u,c;(c-1),(c-1),1) \big\} \non\\
			&+\sum\limits_{t=1}^{i-{\cal I}_{u-1}-1}{\bf q}^\top_{j+1}{\bf q}_{i-t+1}c^{t+1}\big\{{\cal H}(t-1,\lambda_u,\lambda_v,c;(1-c^{-1}),(1-c^{-1}),c^{-1})\non\\
			&\qquad\qquad\qquad\qquad\qquad\qquad+\lambda_u{\cal H}(t,\lambda_u,\lambda_v,c;(c-1),(c-1),1) \big\} \non\\
			&+{\bf q}^\top_{i+1}{\bf q}_{j+1}\frac{(c-1)(2-\lambda_u-\lambda_v)+(1-\lambda_u)(1-\lambda_v)}{(1-\lambda_u)(1-\lambda_v)[-1+c(2-\lambda_u-\lambda_v)]}.\non
		\end{align}
        The auxiliary functions $\mathcal{H}(\cdot)$, $\mathcal{N}_i(\cdot)$, and $\mathcal{D}_i(\cdot)$, which depend on the eigenvalues ($\lambda_u, \lambda_v$) and the step-size constant $c$, are provided in Appendix \ref{sec0}.
	\end{theorem}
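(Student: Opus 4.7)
The plan is to linearize the joint recursion, decouple it via the Jordan similarity of ${\bf W}^{\top}$, analyze the dominant (synchronization) mode and the subdominant Jordan blocks separately, and close with a stable martingale central limit theorem. The hypothesis $\tau<1-(2c)^{-1}$ will serve as the integrability criterion ensuring that contributions from the non-dominant eigenvalues appear at the same $\sqrt n$ scale as the synchronization mode. Writing $\Delta{\bf M}_{n+1}={\bf X}_{n+1}-{\bf W}^{\top}{\bf Z}_n$ and using ${\bf W}^{\top}{\bf 1}={\bf 1}$, I would first derive
\begin{align*}
{\bf Z}_{n+1}-Z_{\infty}{\bf 1}&=[(1-r_n){\bf I}+r_n{\bf W}^{\top}]({\bf Z}_n-Z_{\infty}{\bf 1})+r_n\Delta{\bf M}_{n+1},\\
{\bf N}_{n+1}-Z_{\infty}{\bf 1}&=\bigl(1-\tfrac{1}{n+1}\bigr)({\bf N}_n-Z_{\infty}{\bf 1})+\tfrac{1}{n+1}\bigl[{\bf W}^{\top}({\bf Z}_n-Z_{\infty}{\bf 1})+\Delta{\bf M}_{n+1}\bigr].
\end{align*}
Setting $\widehat{\bf Z}_n=\widetilde{\bf Q}^{\top}({\bf Z}_n-Z_{\infty}{\bf 1})$ and $\widehat{\bf N}_n=\widetilde{\bf Q}^{\top}({\bf N}_n-Z_{\infty}{\bf 1})$, the identity $\widetilde{\bf Q}^{\top}{\bf W}^{\top}=\widetilde{\bf J}^{\top}\widetilde{\bf Q}^{\top}$ (obtained by transposing ${\bf W}\widetilde{\bf Q}=\widetilde{\bf Q}\widetilde{\bf J}$) decouples the linear part into the scalar $1$-eigenmode and one block per non-dominant Jordan block $({\bf J}_t,\lambda_t,\rho_t)$.

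For the synchronization mode, the first coordinate $\widehat Z_{n,1}={\bf q}_1^{\top}({\bf Z}_n-Z_{\infty}{\bf 1})$ is a pure reinforcement martingale, and since it tends to $0$ by Theorem~\ref{th7}, equals the tail $-\sum_{k\ge n}r_k{\bf q}_1^{\top}\Delta{\bf M}_{k+1}$. A stable tail-CLT with $r_k\sim c/k$ and $\mathbb{E}[\Delta{\bf M}_{k+1}\Delta{\bf M}_{k+1}^{\top}\mid{\cal F}_k]\to Z_{\infty}(1-Z_{\infty}){\bf I}$ yields $\sqrt n\,\widehat Z_{n,1}\to{\cal N}(0,c^{2}Z_{\infty}(1-Z_{\infty})\|{\bf q}_1\|^{2})$ stably; transforming back via the transposed identity ${\bf I}={\bf p}_1{\bf q}_1^{\top}+{\bf P}{\bf Q}^{\top}$ with ${\bf p}_1=N^{-1/2}{\bf 1}$ produces the $\widetilde{\bm\Sigma}_1$ contribution. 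For each non-dominant block, iterate the decoupled recursion to obtain $\widehat{\bf Z}_n^{(t)}=\sum_{k=0}^{n-1}{\bm \Pi}_{k+1}^{n}(\lambda_t)\,r_k{\bf Q}_t^{\top}\Delta{\bf M}_{k+1}+o(1)$ with ${\bm \Pi}_{k+1}^{n}(\lambda_t)=\prod_{j=k+1}^{n-1}[(1-r_j){\bf I}+r_j{\bf J}_t^{\top}]$. A binomial expansion against the nilpotent part of ${\bf J}_t^{\top}$, together with the asymptotics $\prod_j(1-r_j(1-\lambda_t))\sim(k/n)^{c(1-\lambda_t)}$, writes every entry of ${\bm\Pi}$ as $(k/n)^{c(1-\lambda_t)}$ times an explicit polynomial in $\log(n/k)$ of degree $\le\rho_t-1$. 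Squaring these kernels for the quadratic variation, summing over $k$, and passing to the integral $\int_0^{\infty}v^{t+s}e^{-(c(2-\lambda_u-\lambda_v)-1)v}\,dv$ via $v=\log(n/k)$ reproduces exactly the factors $(t+s)!/[-1+c(2-\lambda_u-\lambda_v)]^{t+s+1}$ in \eqref{eq1}; the integrability of these Beta-type sums is equivalent to $2c(1-\tau)>1$, i.e.\ to the standing hypothesis.

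The treatment of ${\bf N}_n$ is parallel but now couples two propagator kernels: the empirical-average kernel $\prod_j(1-(j+1)^{-1})$ and the reinforcement kernel with ${\bf J}_t^{\top}$. An Abel summation converts the driver term involving $\widehat{\bf Z}_k$ into a single martingale sum, and the step-size mismatch $c\ne 1$ between $r_k\sim c/k$ and $1/(k+1)$ is precisely what generates the $(c-1)$ coefficients, the ratios ${\cal N}_i/{\cal D}_i$, and the bi-eigenvalue kernel ${\cal H}$ of \eqref{eq65}, \eqref{eq66}. Once every coordinate has been written as a martingale $\sum_k{\bf A}_{n,k}\Delta{\bf M}_{k+1}$ with deterministic coefficients, stable convergence follows from the martingale stable CLT (as in \cite{r1,r2}) upon verifying (a) that the conditional quadratic variation converges to the identified limit times ${\cal F}_{\infty}$-measurable factor $Z_{\infty}(1-Z_{\infty})$, which follows from ${\bf Z}_k\to Z_{\infty}{\bf 1}$ together with the kernel asymptotics above, and (b) the conditional Lindeberg condition, which is immediate from $|\Delta{\bf M}_{k+1}|\le 2$. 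Conjugating by $\widetilde{\bf Q}^{-\top}=\widetilde{\bf P}^{\top}$ finally casts the covariance in the ${\bf P}\cdot{\bf P}^{\top}$ (resp.\ $\widetilde{\bf P}\cdot\widetilde{\bf P}^{\top}$) sandwich of \eqref{eq1}--\eqref{eq66}.

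The main obstacle is the combinatorial bookkeeping for the Jordan blocks and, especially, the double-kernel calculation of $\widehat{\bm\Sigma}_{\bf NN}$. Three intertwined issues must be handled: (i) the nilpotent parts of ${\bf J}_t^{\top}$ and ${\bf J}_v^{\top}$ each produce $\log$-polynomial correction factors, and their product yields the $(t+s)!$-weighted double sum over the Jordan-block positions of $(i,j)$; (ii) the mismatch of time-scales in the ${\bf N}$-dynamics forces the explicit evaluation of mixed sums such as $\sum_k k^{c(\lambda-1)-2}(\log k)^{p}$, which are packaged in the auxiliary functions ${\cal H},{\cal N}_i,{\cal D}_i$ of Appendix~\ref{sec0}; and (iii) all resulting sums must be shown to converge---not merely be bounded---uniformly over the $(u,v)$-pair of Jordan blocks, which requires a dominated-convergence argument that relies on the spectral gap $2c(1-\tau)>1$. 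Once these explicit limits are identified and matched against the closed forms of the theorem, the stable CLT itself is routine.
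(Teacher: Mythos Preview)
Your outline matches the paper's strategy closely: decompose via the Jordan structure of ${\bf W}$, treat the dominant mode as a tail martingale and the non-dominant part by iterating the linear recursion, identify the limiting quadratic variation via the propagator asymptotics $\prod_j(1-r_j(1-\lambda_t))\sim(k/n)^{c(1-\lambda_t)}$ together with the nilpotent $\log$-polynomial corrections, and apply a stable martingale CLT. Two differences are worth flagging. First, instead of your Abel summation the paper writes a single $(2N-1)$-dimensional recursion for ${\bm\theta}_n=(\widehat{\bf Z}_n,\widehat{\bf N}_n)$ driven by a block lower-triangular matrix ${\bf S}_U$; the resulting propagator ${\bf C}_{k+1,n}=\prod_m({\bf I}-r_m{\bf S}_U)$ carries an off-diagonal block ${\bf C}^{31}$ encoding the coupling of the two time scales, whose entries are computed explicitly (Lemma~\ref{leww1}). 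This is somewhat cleaner bookkeeping than substituting one solution into the other, though both routes should give the same answer. Second, the rigorous passage from the discrete quadratic-variation sum (with random factor ${\bf q}_i^\top\Delta{\bf M}_{k+1}\Delta{\bf M}_{k+1}^\top{\bf q}_j$) to your integral limit is not a dominated-convergence argument but a weighted strong law, Lemma~\ref{le10}, which imposes additional regularity conditions on the kernel $v_{n,k}$ that must be checked separately (Lemmas~\ref{le12} and~\ref{le14}).

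There is one point where your proposal has a genuine gap. You handle the synchronization mode by a tail-CLT (summing over $k\ge n$) and the non-dominant modes by a forward-CLT (summing over $k\le n$), and then assert that ``every coordinate has been written as a martingale $\sum_k{\bf A}_{n,k}\Delta{\bf M}_{k+1}$.'' But the tail sum $-\sum_{k\ge n}r_k{\bf q}_1^\top\Delta{\bf M}_{k+1}$ cannot be embedded in the same triangular array as the forward sums, so a single application of the martingale CLT does not deliver the \emph{joint} limit. The paper closes this by invoking Lemma~\ref{le11} (from \cite{r10}), which allows one to combine a stably convergent sequence with one that converges stably in the strong sense into a joint limit with product kernel; the additive form $\widetilde{\bm\Sigma}_1+\widehat{\bm\Sigma}_{\bullet\bullet}$ of the covariance then follows from the resulting independence of the two Gaussian components. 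You need either to invoke such a combination lemma explicitly or to explain another route to the joint statement.
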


\begin{remark}
The complexity of the asymptotic covariance matrix in Theorem \ref{th5}, particularly in $\widehat{\bm \Sigma}_{\bf NN}$, arises directly from the presence of Jordan blocks of order greater than 1 in $\mathbf{W}$. The off-diagonal elements within these blocks induce coupling between the dynamics associated with generalized eigenvectors for the same eigenvalue. Consequently, the calculation of second moments necessitates tracking these dependencies, leading inherently to the combinatorial terms encapsulated by the auxiliary function $\mathcal{H}(\cdot)$. Thus, $\mathcal{H}(\cdot)$ precisely represents the computational structure emerging from these higher-order Jordan blocks.
\end{remark}

	Under the setting $\gamma=1$, we now turn to the threshold case $\tau=1-(2c)^{-1}$. The following theorem describes the corresponding asymptotic behavior.
	\begin{theorem}[Convergence Rate for $\gamma=1$, $\tau=1-(2c)^{-1}$]\label{th13}
		Under Assumptions \ref{as1} and \ref{as2}, when $N\ge2$, $\gamma=1$, $\tau=1-(2c)^{-1}$, we have
		\begin{equation*}
			\frac{\sqrt{n}}{(\log n)^{\rho-1/2}}\begin{pmatrix}
				{\bf Z}_n-Z_\infty{\bf 1}\\ {\bf N}_n-Z_\infty{\bf 1}
			\end{pmatrix}\to{\cal N}\left({\bf 0},Z_\infty(1-Z_\infty)\begin{pmatrix}
				\widehat{\bm\Sigma}^\ast_{\bf ZZ} & \;\;\widehat{\bm\Sigma}^\ast_{\bf ZN}\\
				\widehat{\bm\Sigma}^{\ast\top}_{\bf ZN} & \;\;\widehat{\bm\Sigma}^\ast_{\bf NN}
			\end{pmatrix}\right)\ \ \ \ \ \ \text{\textit{stably}},
		\end{equation*}
		For any $u, v \in \{1, 2, \dots, T\}$, and for indices $i$, $j$ satisfying ${\cal I}_{u-1}< i \le {\cal I}_u$ and ${\cal I}_{v-1}< j \le {\cal I}_v$, respectively, the matrix $\widehat{\bm\Sigma}^\ast_{\bf ZZ}$ is given by
		\begin{align}\label{eq68}
			&\qquad \widehat{\bm\Sigma}^\ast_{\bf ZZ}={\bf P}\widehat{\bm S}^\ast_{\bf ZZ}{\bf P}^\top,\ \ {\rm and}\\
			& [\widehat{\bm S}^\ast_{\bf ZZ}]_{i,j}=\begin{cases}
				\frac{c^{2\rho}}{2\rho-1}{\bf q}^\top_{i-\rho+2}{\bf q}_{j-\rho+2}, & \text{if}\ (i,j)=({\cal I}_u,{\cal I}_v),\ \rho_u=\rho_v=\rho,\ \text{and}\\
				& \lambda_u+\lambda_v= 2-c^{-1};\\
				0, & \text{for}\ (i,j)\neq({\cal I}_u,{\cal I}_v),\ \text{or}\ \rho_u\rho_v<\rho^2,\ \text{or}\\
				& \lambda_u+\lambda_v\neq 2-c^{-1}. \non
			\end{cases}
		\end{align}
		The matrix $\widehat{\bm\Sigma}^\ast_{\bf ZN}$ is given by
		\begin{align}\label{eq67}
			&\qquad\widehat{\bm \Sigma}^\ast_{\bf ZN}={\bf P}\widehat{\bf S}^\ast_{\bf ZN}\widetilde{\bf P}^\top,\ \ {\rm and}\\
			&[\widehat{\bm S}^\ast_{\bf ZN}]_{i,1}=0,\non\\
			&[\widehat{\bm S}^\ast_{\bf ZN}]_{i,j+1}=\begin{cases}
				\frac{c^{2\rho-1}}{2\rho-1}\frac{\lambda_v{\bf q}^\top_{i-\rho+2}{\bf q}_{j-\rho+2}}{1-\lambda_u}, & \text{for}\ (i,j)=({\cal I}_u,{\cal I}_v),\ \rho_u=\rho_v=\rho,\ \text{and}\\ &\lambda_u+\lambda_v= 2-c^{-1};\\
				0, & \text{for}\ (i,j)\neq({\cal I}_u,{\cal I}_v),\ \text{or}\ \rho_u\rho_v<\rho^2,\  \text{or}\\
				&\lambda_u+\lambda_v\neq 2-c^{-1}. \non
			\end{cases}
		\end{align}
		And the matrix $\widehat{\bm\Sigma}^\ast_{\bf NN}$ is given by
		\begin{align}\label{eq69}
			&\qquad\widehat{\bm \Sigma}^\ast_{\bf NN}=\widetilde{\bf P}\widehat{\bf S}^\ast_{\bf SN}\widetilde{\bf P}^\top,\ \ {\rm and}\\
			&[\widehat{\bm S}^\ast_{\bf NN}]_{1,1}=0,~~~[\widehat{\bm S}^\ast_{\bf NN}]_{1,j+1}=[\widehat{\bm S}^\ast_{\bf NN}]_{j+1,1}=0,\non\\
			&[\widehat{\bm S}^\ast_{\bf NN}]_{i+1,j+1}=\begin{cases}
				\frac{c^{2\rho-2}}{2\rho-1}\frac{\lambda_u\lambda_v{\bf q}^\top_{i-\rho+2}{\bf q}_{j-\rho+2}}{(1-\lambda_u)(1-\lambda_v)}, & \text{for}\ (i,j)=({\cal I}_u,{\cal I}_v),\ \rho_u=\rho_v=\rho,\ \text{and}\\
				& \lambda_u+\lambda_v= 2-c^{-1};\\
				0, & \text{for}\ (i,j)\neq({\cal I}_u,{\cal I}_v),\ \text{\textit{or}}\ \rho_u\rho_v<\rho^2,\ \text{or}\\ &\lambda_u+\lambda_v\neq 2-c^{-1}. \non
			\end{cases}
		\end{align}
	\end{theorem}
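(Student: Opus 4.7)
The plan is to extend the martingale CLT framework used for Theorem~\ref{th5} with a refined spectral analysis tuned to the critical resonance $\tau = 1-(2c)^{-1}$, where Jordan blocks generate a new logarithmic scaling factor. First, I would project the centered processes onto the generalized eigenbasis by setting $\widetilde{\bf Z}_n = {\bf P}^\top({\bf Z}_n - Z_\infty{\bf 1})$ and $\widetilde{\bf N}_n = \widetilde{\bf P}({\bf N}_n - Z_\infty{\bf 1})$. Using ${\bf P}^\top{\bf W} = {\bf J}^\top{\bf P}^\top$ together with ${\bf P}^\top{\bf 1} = {\bf 0}$ from \eqref{eq55}, the dynamics \eqref{eq3} give
\begin{equation*}
\widetilde{\bf Z}_{n+1} = \bigl({\bf I} - r_n({\bf I} - {\bf J}^\top)\bigr)\widetilde{\bf Z}_n + r_n{\bf P}^\top\Delta{\bf M}_{n+1},
\end{equation*}
where $\Delta{\bf M}_{n+1} = {\bf X}_{n+1} - \mathbb{E}[{\bf X}_{n+1}\mid{\cal F}_n]$ is a bounded martingale increment whose conditional covariance converges almost surely to $Z_\infty(1-Z_\infty){\bf I}$ by Theorem~\ref{th7}. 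An analogous recursion with step size $(n+1)^{-1}$ holds for $\widetilde{\bf N}_n$. Solving explicitly yields $\widetilde{\bf Z}_n = \sum_k r_{k-1}{\bf\Pi}_{k,n}{\bf P}^\top\Delta{\bf M}_k$ with weights ${\bf\Pi}_{k,n} = \prod_{j=k}^{n-1}\bigl({\bf I} - r_j({\bf I}-{\bf J}^\top)\bigr)$.

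The second step is a block-by-block asymptotic analysis of these weights. Splitting ${\bf J}^\top$ into its diagonal part $\lambda{\bf I}$ and its nilpotent part, and using $r_j\sim c/j$, the entry of a Jordan block at intra-block displacement $d\in\{0,\dots,\rho_t-1\}$ behaves asymptotically like $(k/n)^{c(1-\lambda)}\,[c\log(n/k)]^d/d!$. Substituting this into the variance computation and replacing sums by integrals yields contributions of the form $n^{-2c\mathrm{Re}(1-\lambda)}\int_1^n k^{-2+2c\mathrm{Re}(1-\lambda)}[\log(n/k)]^{2d}\,dk$. At the critical line $2c\mathrm{Re}(1-\lambda)=1$ this integral equals $n^{-1}[\log n]^{2d+1}/(2d+1)$, simultaneously producing the normalization $\sqrt n/(\log n)^{\rho-1/2}$ (maximized at $d=\rho-1$) and the coefficient $1/(2\rho-1)$ appearing in \eqref{eq68}. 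For cross-covariances between blocks $u,v$, the analogous product yields the exponent $c(2-\lambda_u-\bar\lambda_v)$; matching this to the critical rate forces the selection rule $\lambda_u+\lambda_v=2-c^{-1}$, while eigenvalues off the critical line or Jordan blocks of sub-maximal size decay strictly faster and are annihilated by the normalization. This explains why the limit covariance is supported precisely on $(i,j)=({\cal I}_u,{\cal I}_v)$ with $\rho_u=\rho_v=\rho$. The extra factors $\lambda_\bullet/(1-\lambda_\bullet)$ in \eqref{eq67}--\eqref{eq69} arise from Abel summation on the ${\bf N}$-recursion: its step size $(n+1)^{-1}$ differs from $r_n$, and the resulting kernel picks up an additional factor $\lambda/(1-\lambda)$ per ${\bf N}$-index.

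The proof then concludes through a stable martingale CLT, of the same form invoked for Theorem~\ref{th5}: the Lindeberg condition is immediate since $|\Delta{\bf M}_{n+1}|\le 2$, and convergence of the conditional quadratic variation to the announced limit reduces to the deterministic spectral integrals above by virtue of Theorem~\ref{th7}. The principal technical obstacle I anticipate is controlling the sub-leading contributions rigorously: one must show that sub-maximal Jordan blocks, sub-extremal intra-block indices $d<\rho-1$, and eigenvalues with $\mathrm{Re}(\lambda)<\tau$ all contribute corrections of order $(\log n)^{2\rho-3}$ or smaller after normalization. A secondary difficulty lies in the joint $({\bf Z}_n,{\bf N}_n)$-analysis: the two distinct step sizes $r_n$ and $(n+1)^{-1}$ generate different convolution kernels in \eqref{eq67} and \eqref{eq69}, and matching them — particularly the double convolution underlying $\widehat{\bm S}^\ast_{\bf NN}$ — will require careful bookkeeping to produce exactly the factors $\lambda_u\lambda_v/[(1-\lambda_u)(1-\lambda_v)]$ together with the vanishing of the row/column indexed by the dominant eigenvector in $\widetilde{\bf P}$.
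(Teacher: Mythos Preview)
Your overall strategy---project onto the Jordan basis, analyze the product matrices block by block, identify the critical logarithmic integral, and close with a stable martingale CLT---is essentially the paper's approach. Two points need correction before it becomes the paper's proof.

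First, you have the projection backwards. The relation you invoke, ${\bf P}^\top{\bf 1}={\bf 0}$, does not appear in \eqref{eq55}; what holds is ${\bf p}_1^\top{\bf Q}={\bf 0}$, i.e.\ ${\bf Q}^\top{\bf 1}={\bf 0}$ (since ${\bf 1}=\sqrt{N}\,{\bf p}_1$). Likewise the dynamics \eqref{eq6} involve ${\bf W}^\top$, and it is ${\bf Q}^\top{\bf W}^\top={\bf J}^\top{\bf Q}^\top$ that produces a clean recursion, not ${\bf P}^\top{\bf W}$. The paper accordingly works with $\widehat{\bf Z}_n={\bf P}{\bf Q}^\top{\bf Z}_n$ and iterates to obtain \eqref{eq10} with weights ${\bf T}_{k+1,n}=\prod_j[{\bf I}-r_j({\bf I}-{\bf J}^\top)]$, whose off-diagonal entries are computed in Lemma~\ref{le1} as $R^{(q,s)}_{n,k}p_{n,s}l_{k,s}$ with $R^{(q,s)}_{n,k}\sim c^q(\log n-\log k)^q$ (no $1/d!$; the sum there is over ordered distinct tuples). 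Your heuristic with the $1/d!$ would produce the wrong constant.

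Second, for the joint $({\bf Z}_n,{\bf N}_n)$ limit the paper does not run two recursions and reconcile them by Abel summation. Instead it stacks $\bm\theta_n=(\widehat{\bf Z}_n,\widehat{\bf N}_n)^\top$ into a single $2N$-dimensional system $\bm\theta_{n+1}=({\bf I}-r_n{\bf U})\bm\theta_n+r_n({\bf V}\Delta{\bf M}_{\theta,n+1}+{\bf R}_{\theta,n+1})$ with explicit block matrices ${\bf U},{\bf V}$, projects via $({\bf P}_\theta,{\bf Q}_\theta)$, and computes the block product ${\bf C}_{k+1,n}$ in Lemma~\ref{leww1}. The factors $\lambda_v/(1-\lambda_u)$ and $\lambda_u\lambda_v/[(1-\lambda_u)(1-\lambda_v)]$ then fall out mechanically from the ${\bf C}^{31}$ block (see \eqref{eqww1}--\eqref{eqww2}), and the key scalar limit is reduced to the single formula \eqref{eq76}. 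This joint treatment also handles the coupling---the $\widehat{\bf N}$-recursion contains $\widehat{\bf Z}$ through the term $c^{-1}{\bf P}{\bf J}^\top{\bf Q}^\top\widehat{\bf Z}_n$---which your separate-recursion sketch does not make explicit. Finally, the paper separates off the contribution of $\widetilde Z_n-Z_\infty$ (Theorem~\ref{th8}) and shows it vanishes under the $(\log n)^{-(\rho-1/2)}$ normalization, rather than including $Z_\infty$ inside the projected coordinates.
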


	\begin{remark}
		Theorems \ref{th3}, \ref{th5} and \ref{th13} establish that the convergence rate of the stochastic process $({\bf Z}_n,{\bf N}_n)_n$ depends on both the order of the step size $\gamma$ and the second-largest real part of the eigenvalues of the adjacency matrix $\mathbf{W}$. When $\prod_{i=2}^{T} \rho_i=1$, corresponding to Jordan blocks of order one and hence a diagonalizable ${\bf W}$,
 the convergence rates and covariance structures 
coincide with those in Theorems 3.2, 3.4, and 3.5 of \cite{r2}. In contrast, if $\prod_{i=2}^{T}\rho_i>1$, reflecting the non-diagonalizability of {\bf W}, the asymptotic covariance matrices differ across all regimes considered, and the convergence rate is also affected in the case $\gamma=1$, $\tau=1-(2c)^{-1}$. These differences arise because the off-diagonal entries in the Jordan decomposition of ${\bf W}$ contribute additional structural components to the covariance matrix.
	\end{remark}

	To formulate the pairwise synchronization rate, we introduce the following notation. 
	Let $[{\bf P}]_{i,\cdot}$ and $[{\bf P}]_{j,\cdot}$ denote the $i$th and $j$th rows of the matrix ${\bf P}$, respectively. Define
	\begin{align*}
		&{\bf p}_{i,j}=({\bf e}^\top_i-{\bf  e}^\top_j){\bf P}=[{\bf P}]_{i,\cdot}-[{\bf P}]_{j,\cdot},\ \ \text{and}\\
		&\widetilde{\bf p}_{i,j}=({\bf  e}^\top_i-{\bf  e}^\top_j)\widetilde{\bf P}=(0,[{\bf P}]_{i,\cdot}-[{\bf P}]_{j,\cdot}).
	\end{align*}
	Within these notations, the theorem below characterizes the synchronization rate between any two agents in the population ${\cal G}$.

	\begin{theorem}[Synchronization Rate]\label{th4}
		Under Assumptions \ref{as1} and \ref{as2}, for any $i,j\in\{1,2,\dots,N\}$, $i\neq j$, it holds that:
		
		{\rm (a)} If $1/2<\gamma<1$, then stably
		\begin{equation*}
			n^{\frac{\gamma}{2}}(Z_{ni}-Z_{nj})\to{\cal N}(0,Z_\infty(1-Z_\infty){\bm\Sigma}_{\gamma,i,j}),
		\end{equation*}
		where for any $u, v \in \{1, 2, \dots, T\}$, and for indices $i$, $j$ satisfying ${\cal I}_{u-1}< i \le {\cal I}_u$ and ${\cal I}_{v-1}< j \le {\cal I}_v$, respectively, the element
		${\bm\Sigma}_{\gamma,i,j}=[\widehat{\bm\Sigma}_{\gamma}]_{i,i}+[\widehat{\bm\Sigma}_{\gamma}]_{j,j}-2[\widehat{\bm\Sigma}_{\gamma}]_{i,j}$, and the matrix $\widehat{\bm \Sigma}_\gamma$ is given by
		\begin{align}\label{eq64}
			&\widehat{\bm\Sigma}_\gamma={\bf P}\widehat{\bf S}_\gamma{\bf P}^\top,\ \ {\rm and}\\
			&	[\widehat{\bf S}_\gamma]_{i,j}=\sum_{s=0}^{j-{\cal I}_{v-1}-1} \sum_{t=0}^{i-{\cal I}_{u-1}-1} \frac{c(t+s)!}{(2 -\lambda_u-\lambda_v)^{t+s+1}}{\bf q}^\top_{i-t+1}{\bf q}_{j-s+1}.\non
		\end{align}

		{\rm (b)} If $\gamma=1$ and $\tau<1-(2c)^{-1}$, then  stably
		\begin{equation*}
			\sqrt{n}\begin{pmatrix}
				Z_{ni}-Z_{nj}\\
				N_{ni}-N_{nj}
			\end{pmatrix}\to{\cal N}\left(0,Z_\infty(1-Z_\infty)\begin{pmatrix}
				{\bf p}_{i,j}\widehat{\bf S}_{\bf ZZ}{\bf p}^\top_{i,j} & \;\;{\bf p}_{i,j}\widehat{\bf S}_{\bf ZN}\widetilde{\bf p}^\top_{i,j}\\
				\widetilde{\bf p}_{i,j}\widehat{\bf S}^\top_{\bf ZN}{\bf p}^\top_{i,j} & \;\;\widetilde{\bf p}_{i,j}\widehat{\bf S}_{\bf NN}\widetilde{\bf p}^\top_{i,j}
			\end{pmatrix}\right).
		\end{equation*}

		{\rm (c)} If $\gamma=1$, $\tau=1-(2c)^{-1}$, then stably
		\begin{equation*}
			{\frac{\sqrt{n}}{(\log n)^{\rho-1/2}}}\begin{pmatrix}
				Z_{ni}-Z_{nj}\\
				N_{ni}-N_{nj}
			\end{pmatrix}\to{\cal N}\left(0,Z_\infty(1-Z_\infty)\begin{pmatrix}
				{\bf p}_{i,j}\widehat{\bf S}^\ast_{\bf ZZ}{\bf p}^\top_{i,j} & \;\;{\bf p}_{i,j}\widehat{\bf S}^\ast_{\bf ZN}\widetilde{\bf p}^\top_{i,j}\\
				\widetilde{\bf p}_{i,j}\widehat{\bf S}^{\ast\top}_{\bf ZN}{\bf p}^\top_{i,j} & \;\;\widetilde{\bf p}_{i,j}\widehat{\bf S}^\ast_{\bf NN}\widetilde{\bf p}^\top_{i,j}
			\end{pmatrix}\right).
		\end{equation*}
	\end{theorem}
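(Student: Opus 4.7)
The plan is to handle the three regimes separately. Parts (b) and (c) follow directly from the joint CLTs in Theorems \ref{th5} and \ref{th13} by a linear projection argument, while part (a) requires a finer analysis because its scaling $n^{\gamma/2}$ is strictly faster than the $n^{\gamma-1/2}$ scaling of Theorem \ref{th3}, so a naive projection only produces a degenerate limit.

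For parts (b) and (c), apply the linear map $\mathrm{diag}\bigl(({\bf e}_i^\top-{\bf e}_j^\top),({\bf e}_i^\top-{\bf e}_j^\top)\bigr)$ to the scaled joint vector and invoke the continuous mapping theorem. The key observation is that $({\bf e}_i-{\bf e}_j)^\top{\bf 1}=0$, which annihilates the rank-one block $\widetilde{\bm\Sigma}_1=\widetilde\sigma_1^2{\bf 1}{\bf 1}^\top$ in every quadrant of the limiting covariance. Using the factorizations $\widehat{\bm\Sigma}_{\bf ZZ}={\bf P}\widehat{\bf S}_{\bf ZZ}{\bf P}^\top$, $\widehat{\bm\Sigma}_{\bf ZN}={\bf P}\widehat{\bf S}_{\bf ZN}\widetilde{\bf P}^\top$, $\widehat{\bm\Sigma}_{\bf NN}=\widetilde{\bf P}\widehat{\bf S}_{\bf NN}\widetilde{\bf P}^\top$ together with the definitions of ${\bf p}_{i,j}$ and $\widetilde{\bf p}_{i,j}$, the residual quadratic forms reduce to the matrix entries claimed in the statement; the analogous computation with the starred kernels of Theorem \ref{th13} yields part (c).

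For part (a), the starting point is the dual identity $I={\bf p}_1{\bf q}_1^\top+{\bf P}{\bf Q}^\top$ (obtained by transposing $\widetilde{\bf P}\widetilde{\bf Q}=I$), which together with ${\bf p}_1=N^{-1/2}{\bf 1}$ and $({\bf e}_i-{\bf e}_j)^\top{\bf 1}=0$ yields the exact pointwise representation
\begin{equation*}
Z_{n,i}-Z_{n,j}={\bf p}_{i,j}\bm\eta_n, \qquad \bm\eta_n:={\bf Q}^\top{\bf Z}_n.
\end{equation*}
Using ${\bf W}{\bf Q}={\bf Q}{\bf J}$, the process $\bm\eta_n$ satisfies the stochastic recursion
\begin{equation*}
\bm\eta_{n+1}=\bigl(I+r_n({\bf J}^\top-I)\bigr)\bm\eta_n+r_n{\bf Q}^\top\Delta{\bf M}_{n+1}, \qquad \Delta{\bf M}_{n+1}={\bf X}_{n+1}-{\bf W}^\top{\bf Z}_n.
\end{equation*}
Since every eigenvalue of ${\bf J}^\top$ has real part strictly less than one (by Assumption \ref{as2}), the drift is contractive; by Theorem \ref{th7}, $\mathrm{Var}(\Delta{\bf M}_{n+1}\mid\mathcal{F}_n)\to Z_\infty(1-Z_\infty)I$ almost surely. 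A stable martingale CLT applied to $n^{\gamma/2}\bm\eta_n$ then produces a Gaussian limit whose covariance solves the discrete Lyapunov equation associated with ${\bf J}^\top$, $c$ and $\gamma$. Resolving this equation block by block in the Jordan basis yields the matrix $\widehat{\bf S}_\gamma$ of \eqref{eq64}, and applying the linear functional ${\bf p}_{i,j}$ gives the claimed limit for $Z_{n,i}-Z_{n,j}$.

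The main obstacle is the explicit resolution of the Lyapunov equation in the presence of non-trivial Jordan blocks. The off-diagonal entries of ${\bf J}$ couple generalized eigenvectors sharing a common eigenvalue and generate the factorial and mixed-index sums visible in \eqref{eq64}, with bookkeeping analogous to but more delicate than that underlying $\widehat{\bf S}_{\bf ZZ}$ in Theorem \ref{th5}, since at the intermediate scale $n^{\gamma/2}$ the martingale variance (rather than the decay of the step size) controls the leading order.
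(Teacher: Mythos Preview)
Your proposal is correct and follows essentially the same route as the paper. For parts (b) and (c) the paper projects from Theorem~\ref{th2} (the CLT for $(\widehat{\bf Z}_n,\widehat{\bf N}_n)$) rather than from Theorems~\ref{th5} and~\ref{th13}, but this is equivalent since $({\bf e}_i-{\bf e}_j)^\top{\bf 1}=0$ removes the $\widetilde Z_n{\bf 1}$ contribution either way. For part (a), your process $\bm\eta_n={\bf Q}^\top{\bf Z}_n$ is exactly the object underlying $\widehat{\bf Z}_n={\bf P}\bm\eta_n$, and the stable CLT you sketch for $n^{\gamma/2}\bm\eta_n$ is precisely Theorem~\ref{th9}(a), which the paper has already established by iterating the recursion and evaluating the limiting covariance sums via Lemmas~\ref{le8} and~\ref{le10} (rather than by solving a Lyapunov equation in the abstract); the paper's proof of part (a) is therefore a one-line citation of Theorem~\ref{th9}(a) together with linear invariance, and you can shorten your argument the same way.
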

	
	The phase transition observed in the convergence rates is not an isolated phenomenon. Although our process ${\bf Z}_n$ arises from an interacting multi-agent network, its associated stochastic approximation form \eqref{eq3} shares a deep structural resemblance with those found in the classical literature on single-urn models, most notably the Generalized Friedman's Urn (GFU) (\cite{r25, r27, r28}). Notably, the GFU model is known to satisfy the stochastic approximation algorithm
	\begin{equation*}
		{\bf Z}_{n+1}-{\bf Z}_n = -\frac{{\bf I}-{\bf H}}{n+1}{\bf Z}_n + \frac{\Delta {\bf M}_{n+1}}{n+1} + \frac{{\bf r}_{n+1}}{n+1},
	\end{equation*}
	where $(\Delta {\bf M}_{n+1})_n$ is a martingale difference sequence and $({\bf r}_{n+1})_n$ is a remainder sequence. This iterative structure closely matches \eqref{eq3} in this paper,
	\begin{equation*}
		{\bf Z}_{n+1}-{\bf Z}_n=-r_n({\bf I}-{\bf W}^\top){\bf Z}_n + r_n \Delta {\bf M}_{n+1},
	\end{equation*}
	especially under the setting $r_n\sim\frac{1}{n}$ (i.e., $\gamma=1$ and $c=1$), where the two forms appear to be highly consistent. Moreover, both the replacement matrix ${\bf H}$ in \cite{r25} and the adjacency matrix ${\bf W}$ in the present paper are allowed to be non-diagonalizable. Theorem 3.2 in \cite{r25} establishes that the second-order asymptotic behavior of the normalized urn composition ${\bf Z}_n$ depends critically on the spectral properties of ${\bf H}$. Defining $\tau$ as the second-largest real part among the eigenvalues, the results show that when $\tau<1/2$, the convergence rate is $\sqrt{n}$. When $\tau=1/2$, the rate becomes ${\sqrt{n}}/{(\log n)^{\rho-1/2}}$. The proposed Theorems \ref{th5} and \ref{th13} demonstrate analogous behavior in both convergence rates and covariance matrix structure, corresponding precisely to these two cases. This striking parallelism strongly suggests that the observed consistency stems from the shared mathematical structure of the underlying stochastic approximation processes.

	In conclusion, the top-down influence dynamic is the essential feature that distinguishes these hierarchical systems from standard irreducible ones. While the leading group exclusively determines the synchronization limit, our second-order results demonstrate that the structure of the downstream groups and their connections governs the rate and path by which the synchronization is reached.

	\section{Proofs of Main Results}\label{sec4}
	In this section, we provide detailed proofs of the main theoretical results presented in this paper.
	\subsection{Proof Framework}\label{sec4.1}
	Recall the following update process of $({\bf Z}_n,{\bf N}_n)_n$,
	\begin{equation*}
		\begin{cases}
			{\bf Z}_{n+1}=(1-r_{n}){\bf Z}_{n}+r_{n}{\bf X}_{n+1},\\
			{\bf N}_{n+1}=\left(1-\frac{1}{n+1}\right){\bf N}_n+\frac{1}{n+1}{\bf X}_{n+1}.
		\end{cases}
	\end{equation*}
	For the stochastic process $({\bf Z}_n)_n$, it follows that
	\begin{align}\label{eq6}
		{\bf Z}_{n+1}-{\bf Z}_n&=-r_n{\bf Z}_n+r_n{\bf X}_{n+1}\non\\
		&=-r_n{\bf Z}_n+r_n{\mathbb E}({\bf X}_{n+1}|{\cal F}_n)+r_n[{\bf X}_{n+1}-{\mathbb E}({\bf X}_{n+1}|{\cal F}_n)]\non\\
		&=-r_n{\bf Z}_n+r_n{\bf W}^\top{\bf Z}_n+r_n\Delta{\bf M}_{n+1}\non\\
		&=-r_n({\bf I}-{\bf W}^\top){\bf Z}_n+r_n\Delta{\bf M}_{n+1},
	\end{align}
	where $\Delta{\bf M}_{n+1}={\bf X}_{n+1}-{\mathbb E}({\bf X}_{n+1}|{\cal F}_n)$, and $(\Delta{\bf M}_n)_n$ is a martingale difference sequence. Since ${\bf q}^\top_1({\bf I}-{\bf W}^\top)={\bf 0}$, we obtain
	\begin{equation}\label{eq16}
		{\bf q}^\top_1{\bf Z}_{n+1}-{\bf q}^\top_1{\bf Z}_{n}=r_n{\bf q}^\top_1\Delta{\bf M}_{n+1}.
	\end{equation}
	Therefore, the sequence $({\bf q}^\top_1{\bf Z}_n)_n$ forms a martingale. Recalling that ${\bf I}={\bf p}_1{\bf q}^\top_1+{\bf P}{\bf Q}^\top$, we may decompose ${\bf Z}_n$ as
	\begin{align}\label{eq13}
		{\bf Z}_n={\bf p}_1{\bf q}^\top_1{\bf Z}_n+{\bf P}{\bf Q}^\top{\bf Z}_n=N^{-1/2}{\bf q}^\top_1{\bf Z}_n{\bf 1}+{\bf P}{\bf Q}^\top{\bf Z}_n=\widetilde{Z}_n{\bf 1}+\widehat{\bf Z}_n,
	\end{align}
	where
	\begin{equation*}
		\widetilde{Z}_n:=N^{-1/2}{\bf q}^\top_1{\bf Z}_n,\ \ \widehat{\bf Z}_n:={\bf P}{\bf Q}^\top{\bf Z}_n,
	\end{equation*}
	while we decompose the stochastic process $({\bf N}_n)_n$ as
	\begin{equation}\label{eq61}
		{\bf N}_{n}=\widetilde{Z}_n{\bf 1}+\widehat{\bf N}_n,\ \ {\rm with}\ \widehat{\bf N}_n:={\bf N}_{n}-\widetilde{Z}_n{\bf 1}.
	\end{equation}
	Based on the decompositions in \eqref{eq13} and \eqref{eq61}, we aim to establish the first- and second-order asymptotic properties of ${\bf Z}_n$ by analyzing $\widetilde{Z}_n$, $\widehat{\bf Z}_n$ and $\widehat{\bf N}_n$.
	To establish the first-order convergence of $({\bf Z}_n, {\bf N}_n)_n$ in Theorem \ref{th7}, we begin by proving the following result for the convergence of ${\bf Z}_n$. The convergence of ${\bf N}_n$ then follows from that of ${\bf Z}_n$ via the recursive relation linking them.
	
	\begin{theorem}\label{th10}
		Under Assumptions \ref{as1} and \ref{as2}, there exists a random variable $Z_\infty$ taking values in $[0,1]$ such that
		\begin{equation*}
			\widetilde{Z}_n\overset{a.s.}{\to}Z_\infty,\qquad \widehat{\bf Z}_n\overset{a.s.}{\to}{\bf 0}.
		\end{equation*}
	\end{theorem}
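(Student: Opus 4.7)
The claim decomposes into two independent pieces: almost sure convergence of the ``average'' component $\widetilde{Z}_n = N^{-1/2}\mathbf{q}_1^\top \mathbf{Z}_n$ to some $Z_\infty\in[0,1]$, and almost sure convergence of the ``fluctuation'' component $\widehat{\mathbf{Z}}_n = \mathbf{P}\mathbf{Q}^\top \mathbf{Z}_n$ to $\mathbf{0}$. The two pieces combine via the decomposition \eqref{eq13}, so the plan is to treat them by different martingale arguments: a bounded-martingale convergence argument for $\widetilde{Z}_n$, and a Lyapunov-type stochastic-approximation argument tailored to the Jordan structure of $\mathbf{W}$ for $\widehat{\mathbf{Z}}_n$.

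For the first piece, identity \eqref{eq16} already exhibits $(\widetilde{Z}_n)_n$ as a real-valued martingale with increments of order $r_n$. To apply bounded martingale convergence I would show $\widetilde{Z}_n\in[0,1]$ by analyzing the support of $\mathbf{q}_1$. The key structural observation is a backward induction on the block index: writing $\mathbf{W}\mathbf{q}_1=\mathbf{q}_1$ in block form yields $(\mathbf{I}-\mathbf{W}_{hh})\mathbf{q}_1^{(h)}=\sum_{k>h}\mathbf{W}_{hk}\mathbf{q}_1^{(k)}$. The hypothesis $\|\mathbf{W}_{hh}\|_1<1$ for $h\ge 2$ renders $\mathbf{I}-\mathbf{W}_{hh}$ invertible; starting from $h=S$ (where the right-hand side is empty) and propagating downward through $h=S-1,\ldots,2$, one obtains $\mathbf{q}_1^{(h)}=\mathbf{0}$ for every $h\ge 2$. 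Hence $\mathbf{q}_1$ is supported on $\mathcal{G}_1$, with $\mathbf{q}_1^{(1)}$ the strictly positive Perron right eigenvector of the irreducible column-stochastic $\mathbf{W}_{11}$. Combined with the normalization $\mathbf{1}^\top\mathbf{q}_1=N^{1/2}$ inherited from \eqref{eq54}--\eqref{eq55}, this pins $\widetilde{Z}_n$ into $[0,1]$, so the bounded martingale converges a.s. to some $Z_\infty\in[0,1]$.

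For the second piece, set $\mathbf{Y}_n := \mathbf{Q}^\top \mathbf{Z}_n$ so that $\widehat{\mathbf{Z}}_n = \mathbf{P}\mathbf{Y}_n$; it suffices to show $\mathbf{Y}_n\to\mathbf{0}$ a.s. Combining \eqref{eq6} with the identity $\mathbf{Q}^\top\mathbf{W}^\top = \mathbf{J}^\top\mathbf{Q}^\top$ (a direct consequence of $\mathbf{W}\mathbf{Q}=\mathbf{Q}\mathbf{J}$) gives
\begin{equation*}
    \mathbf{Y}_{n+1} = \bigl[(1-r_n)\mathbf{I} + r_n \mathbf{J}^\top\bigr]\mathbf{Y}_n + r_n \mathbf{Q}^\top \Delta\mathbf{M}_{n+1}.
\end{equation*}
Under Assumption \ref{as2} every eigenvalue $\lambda_t$ of $\mathbf{J}$ satisfies $\mathrm{Re}(\lambda_t)<1$, so I would invoke the classical Jordan-rescaling trick: for any $\epsilon\in(0,1-\tau)$, a block-diagonal similarity $\mathbf{D}_\epsilon$ with geometrically decaying entries compresses the nilpotent super-diagonals of the Jordan blocks and produces a Hilbert norm $\|\cdot\|_\epsilon$ in which
\begin{equation*}
    \bigl\|(1-r_n)\mathbf{I}+r_n\mathbf{J}^\top\bigr\|_\epsilon \;\le\; 1 - r_n(1-\tau-\epsilon)
\end{equation*}
for all $n$ large. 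Setting $V_n=\|\mathbf{Y}_n\|_\epsilon^2$, conditioning on $\mathcal{F}_n$, and using that $\Delta\mathbf{M}_{n+1}$ is a bounded martingale difference (its entries lie in $[-1,1]$) yields an inequality of Robbins--Siegmund form
\begin{equation*}
    \mathbb{E}[V_{n+1}\mid\mathcal{F}_n] \;\le\; \bigl(1-2r_n(1-\tau-\epsilon)\bigr)V_n + C r_n^2.
\end{equation*}
Since $\sum_n r_n=\infty$ and $\sum_n r_n^2<\infty$ whenever $\gamma\in(1/2,1]$, the Robbins--Siegmund theorem delivers $V_n\to 0$ a.s., hence $\mathbf{Y}_n\to\mathbf{0}$ and $\widehat{\mathbf{Z}}_n=\mathbf{P}\mathbf{Y}_n\to\mathbf{0}$ a.s.

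The principal obstacle is the second piece in the non-diagonalizable regime: the Euclidean norm fails to contract under $(1-r_n)\mathbf{I}+r_n\mathbf{J}^\top$ whenever $\mathbf{J}$ carries a Jordan block of size larger than one, because of the nilpotent super-diagonal contribution. The diagonal rescaling $\mathbf{D}_\epsilon$ is precisely the device that absorbs this nilpotent part into an arbitrarily small perturbation in a well-chosen norm, so that the strict contractivity implied by the spectral condition $\mathrm{Re}(\lambda_t)<1$ survives. The summability of $r_n^2$ required for the Robbins--Siegmund step is exactly what the threshold $\gamma>1/2$ guarantees.
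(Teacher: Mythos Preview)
Your proposal is correct and takes essentially the same approach as the paper: bounded martingale convergence for $\widetilde{Z}_n$ (via nonnegativity and normalization of $\mathbf{q}_1$), and for $\widehat{\mathbf{Z}}_n$ a diagonal Jordan rescaling (packaged in the paper as Lemma~\ref{le7}) to obtain a contracting norm, followed by the Robbins--Siegmund almost-supermartingale inequality. The only cosmetic differences are that the paper expresses the contraction constant through $\max_s|\lambda_s|$ rather than your $\tau=\max_s\mathrm{Re}(\lambda_s)$, and identifies the a.s.\ limit as zero via a separate expectation argument (Lemma~\ref{le9}) rather than directly from the $\sum_n r_n V_n<\infty$ conclusion of Robbins--Siegmund combined with $\sum_n r_n=\infty$.
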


	To characterize the second-order asymptotic behavior of the sequence $({\bf Z}_n, {\bf N}_n)_n$, we prove the asymptotic normality of the relevant processes $\widetilde{Z}_n$, $\widehat{\bf Z}_n$, and $\widehat{\bf N}_n$, as established in Theorems \ref{th8}, \ref{th9}, and \ref{th2}. 
	\begin{theorem}\label{th8}
		Under Assumptions \ref{as1} and \ref{as2}, for $1/2<\gamma\le 1$, we have
		\begin{equation*}
			n^{\gamma-\frac{1}{2}}(\widetilde{Z}_n-Z_\infty)\to{\cal N}(0,Z_\infty(1-Z_\infty)\widetilde{\sigma}^2_\gamma)\ \ \ \ \ \ \text{\textit{stably}},
		\end{equation*}
		where $\widetilde{\sigma}^2_\gamma$ is given by \eqref{eq4}.
	\end{theorem}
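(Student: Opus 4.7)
The plan is to recognize $(\widetilde{Z}_n)_n$ as a scalar martingale and apply a stable martingale CLT to its tail sum. From \eqref{eq16} and the definition $\widetilde{Z}_n = N^{-1/2}\mathbf{q}_1^\top \mathbf{Z}_n$, the martingale increments are
\begin{equation*}
\widetilde{Z}_{n+1} - \widetilde{Z}_n = N^{-1/2} r_n \mathbf{q}_1^\top \Delta \mathbf{M}_{n+1}.
\end{equation*}
By Theorem \ref{th10}, $\widetilde{Z}_n \to Z_\infty$ almost surely, so this martingale converges in $L^2$ (its quadratic variation is summable when $\gamma > 1/2$), and we may write the telescoping tail representation
\begin{equation*}
Z_\infty - \widetilde{Z}_n = N^{-1/2} \sum_{k=n}^\infty r_k \mathbf{q}_1^\top \Delta \mathbf{M}_{k+1}.
\end{equation*}
I would then treat $\xi_{n,k} := -n^{\gamma-1/2} N^{-1/2} r_k \mathbf{q}_1^\top \Delta \mathbf{M}_{k+1}$, $k \ge n$, as a triangular array of martingale differences and invoke a stable CLT of Crimaldi/Aletti--Crimaldi--Ghiglietti type.

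The two standard hypotheses are (i) almost sure convergence of the conditional quadratic variation to the candidate limit, and (ii) a conditional Lindeberg (or uniform negligibility) condition. For (i), because the coordinates of $\mathbf{X}_{n+1}$ are conditionally independent Bernoulli with means $(\mathbf{W}^\top \mathbf{Z}_k)_j$,
\begin{equation*}
\mathbb{E}\bigl[(\mathbf{q}_1^\top \Delta \mathbf{M}_{k+1})^2 \mid \mathcal{F}_k\bigr] = \sum_{j=1}^N q_{1j}^2 (\mathbf{W}^\top \mathbf{Z}_k)_j \bigl(1 - (\mathbf{W}^\top \mathbf{Z}_k)_j\bigr).
\end{equation*}
Combining Theorem \ref{th10} with $\mathbf{W}^\top \mathbf{1} = \mathbf{1}$ (column-normalization in Assumption \ref{as2}) gives $\mathbf{W}^\top \mathbf{Z}_k \to Z_\infty \mathbf{1}$ a.s., hence this conditional second moment tends to $\|\mathbf{q}_1\|^2 Z_\infty(1-Z_\infty)$ a.s. A Toeplitz-type tail averaging argument, using $\sum_{k=n}^\infty r_k^2 \sim c^2 (2\gamma-1)^{-1} n^{-(2\gamma-1)}$ (which follows from Assumption \ref{as1} since $\gamma>1/2$), then yields
\begin{equation*}
n^{2\gamma-1} \sum_{k=n}^\infty N^{-1} r_k^2 \, \mathbb{E}\bigl[(\mathbf{q}_1^\top \Delta \mathbf{M}_{k+1})^2 \mid \mathcal{F}_k\bigr] \xrightarrow{a.s.} \frac{c^2 \|\mathbf{q}_1\|^2}{N(2\gamma-1)} Z_\infty(1-Z_\infty) = \widetilde{\sigma}_\gamma^2 Z_\infty(1-Z_\infty).
\end{equation*}
Condition (ii) is automatic because $|\mathbf{q}_1^\top \Delta \mathbf{M}_{k+1}| \le 2\|\mathbf{q}_1\|_1$ is uniformly bounded, so $\max_{k \ge n} |\xi_{n,k}| \le C n^{\gamma-1/2} r_n = O(n^{-1/2}) \to 0$.

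The main technical delicacy is ensuring that the stable CLT for tail sums applies with a random limiting variance $\widetilde{\sigma}_\gamma^2 Z_\infty(1-Z_\infty)$ that depends on the $\mathcal{F}_\infty$-measurable limit $Z_\infty$; this is precisely what stable convergence is designed for, and the almost sure convergence of the conditional variance established above is the key input. A minor subtlety to verify is the uniformity in $n$ of the Toeplitz-type averaging, which follows from the uniform boundedness of the conditional second moments by $\|\mathbf{q}_1\|^2/4$. Once stable convergence is established with the correct variance, the conclusion of Theorem \ref{th8} follows immediately.
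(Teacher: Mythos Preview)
Your proposal is correct and follows essentially the same approach as the paper: the paper's own proof simply observes that $\widetilde{Z}_n$ has exactly the same form as in \cite{r1} and therefore defers entirely to Lemma~4.1 there, while your sketch reconstructs precisely the tail-sum martingale stable-CLT argument that underlies that lemma (conditional variance $\to \|\mathbf{q}_1\|^2 Z_\infty(1-Z_\infty)$ via \eqref{eq19}, normalization by $\sum_{k\ge n} r_k^2 \sim c^2 n^{-(2\gamma-1)}/(2\gamma-1)$, and bounded increments for negligibility).
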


	\begin{theorem}\label{th9}
		Under Assumptions \ref{as1} and \ref{as2}, when $1/2<\gamma<1$, it holds that:
		
		{\rm (a)} The stochastic process $(\widehat{{\bf Z}}_n)_n$ satisfies
		\begin{equation*}
			n^{\gamma/2}\widehat{{\bf Z}}_n\to {\cal N}({\bf 0},Z_\infty(1-Z_\infty)\widehat{\bm \Sigma}_\gamma)\ \ \ \ \ \ \text{\textit{stably}},
		\end{equation*}
		where $\widehat{\bm \Sigma}_\gamma$ ia given by \eqref{eq64}.

		{\rm (b)} The stochastic process $(\widehat{{\bf N}}_n)_n$ satisfies
		\begin{equation*}
			n^{\gamma-\frac{1}{2}}\widehat{{\bf N}}_n\to {\cal N}({\bf 0},Z_\infty(1-Z_\infty)\widehat{\bm \Gamma}_\gamma)\ \ \ \ \ \ \text{\textit{stably}},
		\end{equation*}
		where $\widehat{\bm \Gamma}_\gamma$ is given by \eqref{eq63}.
	\end{theorem}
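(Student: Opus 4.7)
\emph{Approach.} Both parts proceed by reducing to a linear stochastic approximation (SA) recursion and then invoking a stable martingale central limit theorem. Combining the almost-sure limits in Theorem \ref{th10} with the conditional independence of the $X_{n+1,j}$'s across $j$ gives $\mathbb{E}[\Delta{\bf M}_{n+1}\Delta{\bf M}_{n+1}^\top\mid{\cal F}_n]\to Z_\infty(1-Z_\infty){\bf I}$ almost surely, so the $Z_\infty(1-Z_\infty)$ prefactor in each limiting covariance is automatic and the task reduces to identifying the deterministic matrix that it multiplies.

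\emph{Part (a).} Left-multiplying \eqref{eq6} by ${\bf Q}^\top$ and using ${\bf Q}^\top{\bf 1}={\bf 0}$ together with the identity ${\bf Q}^\top({\bf I}-{\bf W}^\top)=({\bf I}-{\bf J}^\top){\bf Q}^\top$, which follows from ${\bf W}={\bf q}_1{\bf p}_1^\top+{\bf Q}{\bf J}{\bf P}^\top$, the variable ${\bf Y}_n:={\bf Q}^\top{\bf Z}_n$ satisfies the closed SA
\begin{equation*}
{\bf Y}_{n+1}=\bigl[{\bf I}-r_n({\bf I}-{\bf J}^\top)\bigr]{\bf Y}_n+r_n{\bf Q}^\top\Delta{\bf M}_{n+1},\qquad\widehat{\bf Z}_n={\bf P}{\bf Y}_n.
\end{equation*}
Since every non-unit eigenvalue of ${\bf W}$ has modulus strictly less than one, ${\bf I}-{\bf J}^\top$ is Hurwitz, and $r_n^2\gg r_n/n$ in the diffusive regime $1/2<\gamma<1$. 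The asymptotic covariance of $n^{\gamma/2}{\bf Y}_n$ is therefore expected to solve the Lyapunov balance
\begin{equation*}
({\bf I}-{\bf J}^\top){\bf V}+{\bf V}({\bf I}-\overline{\bf J})=c\,Z_\infty(1-Z_\infty)\,{\bf Q}^\top{\bf Q}.
\end{equation*}
I will iterate the SA to represent $n^{\gamma/2}{\bf Y}_n$ as a martingale sum plus a transient of order $\exp(-Cn^{1-\gamma})$, check the Lindeberg condition using $\|\Delta{\bf M}_{n+1}\|\le\sqrt{N}$ and $r_n\to 0$, and then compute the limiting conditional-variance sum block by block along the Jordan structure of ${\bf J}^\top$. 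Inside a single size-$\rho_u$ Jordan block, the propagator $\prod_{j=k+1}^{n-1}[{\bf I}-r_j({\bf I}-{\bf J}_u^\top)]$ is asymptotically $\exp\bigl(-(1-\lambda_u)\int_k^n r_t\,dt\bigr)$ multiplied by the nilpotent exponential $\sum_{a\ge 0}\bigl(\int_k^n r_t\,dt\bigr)^a({\bf J}_u^\top-\lambda_u{\bf I})^a/a!$; tensoring two such kernels and integrating against ${\bf Q}^\top{\bf Q}$ reproduces the double sum $\sum_{t,s}c(t+s)!/(2-\lambda_u-\lambda_v)^{t+s+1}\,{\bf q}^\top_{i-t+1}{\bf q}_{j-s+1}$ appearing in \eqref{eq64}. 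Conjugating by ${\bf P}$ and invoking a stable martingale CLT then yields $n^{\gamma/2}\widehat{\bf Z}_n\to{\cal N}({\bf 0},Z_\infty(1-Z_\infty)\widehat{\bm\Sigma}_\gamma)$ stably.

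\emph{Part (b).} For $\widehat{\bf N}_n={\bf N}_n-\widetilde{Z}_n{\bf 1}$, subtracting the martingale increment \eqref{eq16} of $\widetilde{Z}_n$ scaled by ${\bf 1}$ from the update of ${\bf N}_n$, and using ${\bf W}^\top{\bf 1}={\bf 1}$ and ${\bf X}_{n+1}={\bf W}^\top{\bf Z}_n+\Delta{\bf M}_{n+1}$, I obtain
\begin{equation*}
\widehat{\bf N}_{n+1}=\Bigl(1-\tfrac{1}{n+1}\Bigr)\widehat{\bf N}_n+\tfrac{1}{n+1}{\bf W}^\top\widehat{\bf Z}_n+\Bigl[\tfrac{1}{n+1}{\bf I}-r_n N^{-1/2}{\bf 1}{\bf q}_1^\top\Bigr]\Delta{\bf M}_{n+1}.
\end{equation*}
Unrolling with integrating factor $\prod_{j=k+1}^{n-1}(1-\tfrac{1}{j+1})=(k+1)/n$ produces three martingale pieces whose respective scales are $n^{-\gamma/2}$ (from $\widehat{\bf Z}_k$, using part (a)), $n^{-1/2}$ (from the diagonal noise), and $n^{1/2-\gamma}$ (from the rank-one noise $-r_k N^{-1/2}{\bf 1}{\bf q}_1^\top\Delta{\bf M}_{k+1}$). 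For $1/2<\gamma<1$ the rank-one piece is strictly dominant; its conditional variance, evaluated via $\sum_{k\le n}(k+1)^2 r_k^2\sim c^2 n^{3-2\gamma}/(3-2\gamma)$ and ${\bf q}_1^\top{\bf q}_1=\|{\bf q}_1\|^2$, converges after scaling by $n^{2\gamma-1}$ to $Z_\infty(1-Z_\infty)\,\tfrac{c^2\|{\bf q}_1\|^2}{N(3-2\gamma)}{\bf 1}{\bf 1}^\top=Z_\infty(1-Z_\infty)\widehat{\bm\Gamma}_\gamma$. A stable martingale CLT applied to the rank-one part, together with $o_P(1)$ control of the two subdominant pieces, yields (b).

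\emph{Main obstacle.} The abstract SA plus stable-CLT machinery and the $o_P(1)$ bounds in (b) are routine. The real technical difficulty is the Jordan-block resolution of the Lyapunov equation in (a): the Sylvester operator $({\bf I}-{\bf J}^\top)\oplus({\bf I}-\overline{\bf J})$ is non-diagonal and couples generalized eigenvectors within each block, so translating its abstract solution into the explicit double sum \eqref{eq64} demands delicate Abel summation against $r_k\sim ck^{-\gamma}$ and careful bookkeeping of the indices $({\cal I}_{u-1}+1,\ldots,{\cal I}_u)$ and $({\cal I}_{v-1}+1,\ldots,{\cal I}_v)$ across every pair $(u,v)$ of Jordan blocks.
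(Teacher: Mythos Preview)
Your proposal is correct and follows essentially the same route as the paper. For (a) you iterate the recursion for ${\bf Q}^\top{\bf Z}_n$, kill the transient, and apply a stable martingale CLT with a block-by-block variance computation along the Jordan structure---exactly the paper's argument, though the paper carries out the variance identification elementwise via a generalized Toeplitz lemma (their Lemma~\ref{le10}) and a battery of asymptotic estimates (Lemmas~\ref{le1}, \ref{le13}, \ref{le8}, \ref{le12}, \ref{le14}) rather than naming the Lyapunov equation; for (b) your decomposition into a dominant rank-one martingale plus two subdominant pieces coincides with the decomposition $n^{\gamma-3/2}\sum{\bf T}_k+{\bf W}^\top{\bf Q}_n$ that the paper imports from \cite{r2}, with your ${\bf W}^\top\widehat{\bf Z}_k$ term playing the role of ${\bf Q}_n$ and the $o_P$ control supplied by the second-moment bound $\mathbb{E}\|\widehat{\bf Z}_n\|^2=O(n^{-\gamma})$ from part~(a). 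One minor inconsistency: your Lyapunov balance mixes conventions, pairing $\overline{\bf J}$ with the bilinear right-hand side ${\bf Q}^\top{\bf Q}$; to match the paper's $\widehat{\bm\Sigma}_\gamma={\bf P}\widehat{\bf S}_\gamma{\bf P}^\top$ with $2-\lambda_u-\lambda_v$ (no conjugate) in the denominator, the equation should read $({\bf I}-{\bf J}^\top){\bf V}+{\bf V}({\bf I}-{\bf J})=c\,{\bf Q}^\top{\bf Q}$, since the target is the bilinear limit of $n^\gamma{\bf Y}_n{\bf Y}_n^\top$.
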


	\begin{theorem}\label{th2}
		Under Assumptions \ref{as1} and \ref{as2}, when $\gamma=1$, it holds that:
		
		{\rm (a)} When $\tau<1-(2c)^{-1}$,
		\begin{equation*}
			\sqrt{n}\begin{pmatrix}
				\widehat{{\bf Z}}_n\\ \widehat{{\bf N}}_n
			\end{pmatrix}\to{\cal N}\left({\bf 0},Z_\infty(1-Z_\infty)\begin{pmatrix}
				\widehat{\bm\Sigma}_{\bf ZZ} &\;\; \widehat{\bm\Sigma}_{\bf ZN}\\
				\widehat{\bm\Sigma}^\top_{\bf ZN} & \;\;\widehat{\bm\Sigma}_{\bf NN}
			\end{pmatrix}\right)\ \ \ \ \ \ \text{\textit{stably}},
		\end{equation*}
		where $\widehat{\bm\Sigma}_{\bf ZZ},\ \widehat{\bm\Sigma}_{\bf ZN}$ and $\widehat{\bm\Sigma}_{\bf NN}$ are given by \eqref{eq1}, \eqref{eq65} and \eqref{eq66}, respectively.

		{\rm (b)} When $\tau=1-(2c)^{-1}$,
		\begin{equation*}
			\frac{\sqrt{n}}{(\log n)^{\rho-1/2}}\begin{pmatrix}
				\widehat{{\bf Z}}_n\\ \widehat{{\bf N}}_n
			\end{pmatrix}\to{\cal N}\left({\bf 0},Z_\infty(1-Z_\infty)\begin{pmatrix}
				\widehat{\bm\Sigma}^\ast_{\bf ZZ} & \;\;\widehat{\bm\Sigma}^\ast_{\bf ZN}\\
				\widehat{\bm\Sigma}^{\ast\top}_{\bf ZN} &\;\; \widehat{\bm\Sigma}^\ast_{\bf NN}
			\end{pmatrix}\right)\ \ \ \ \ \ \text{\textit{stably}},
		\end{equation*}
		where $\widehat{\bm\Sigma}^\ast_{\bf ZZ},\ \widehat{\bm\Sigma}^\ast_{\bf ZN}$ and $\widehat{\bm\Sigma}^\ast_{\bf NN}$ are given by \eqref{eq68}, \eqref{eq67} and \eqref{eq69}, respectively.
	\end{theorem}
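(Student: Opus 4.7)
The plan is to represent the pair $(\widehat{\bf Z}_n,\widehat{\bf N}_n)$ as affine linear functionals of the bounded martingale increments $\Delta{\bf M}_{k+1}$ and then apply a stable martingale CLT adapted to the Jordan structure of ${\bf W}$. Introducing the Jordan coordinate $\widehat{\bf U}_n:={\bf Q}^\top{\bf Z}_n$, the identity ${\bf Q}^\top{\bf W}^\top={\bf J}^\top{\bf Q}^\top$ (a direct consequence of \eqref{eq55} and the Jordan decomposition) converts \eqref{eq6} into
\begin{equation*}
\widehat{\bf U}_{n+1}=[{\bf I}-r_n({\bf I}-{\bf J}^\top)]\widehat{\bf U}_n+r_n{\bf Q}^\top\Delta{\bf M}_{n+1},
\end{equation*}
with $\widehat{\bf Z}_n={\bf P}\widehat{\bf U}_n$. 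An analogous manipulation of the second line of \eqref{eq3}, using \eqref{eq16} to strip off the drift carried by $\widetilde{Z}_n{\bf 1}$, produces an affine recursion for $\widehat{\bf N}_n$ driven by $\Delta{\bf M}_{n+1}$ with step size $(n+1)^{-1}$ and coupled to $\widehat{\bf Z}_n$ through ${\bf W}^\top{\bf Z}_n=\widetilde{Z}_n{\bf 1}+{\bf Q}{\bf J}{\bf P}^\top\widehat{\bf Z}_n$. Iterating these recursions expresses the joint process as explicit weighted sums over past $\Delta{\bf M}_{k+1}$, with kernels built from the Jordan semigroup $\Phi_{k,n}:=\prod_{j=k}^{n-1}[{\bf I}-r_j({\bf I}-{\bf J}^\top)]$.

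Under Assumption~\ref{as1} with $\gamma=1$, on the Jordan block at $\lambda_t$ of order $\rho_t$,
\begin{equation*}
\Phi_{k,n}\sim\Bigl(\frac{k}{n}\Bigr)^{c(1-\lambda_t)}\sum_{\ell=0}^{\rho_t-1}\frac{[c\log(n/k)]^{\ell}}{\ell!}({\bf J}_t^\top-\lambda_t{\bf I})^{\ell},
\end{equation*}
so each generalized eigenvector pair contributes kernels of the form $(k/n)^{c(2-\lambda_u-\lambda_v)}[\log(n/k)]^{t+s}/k$ to the representation of $(\widehat{\bf Z}_n,\widehat{\bf N}_n)$. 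These polynomial-in-$\log$ corrections, absent in the diagonalizable setting of \cite{r2}, are precisely what generates the auxiliary functions $\mathcal{H}(\cdot)$, $\mathcal{N}_i(\cdot)$ and $\mathcal{D}_i(\cdot)$ once the resulting time integrals are evaluated.

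The scaled pair, $\sqrt{n}(\widehat{\bf Z}_n^\top,\widehat{\bf N}_n^\top)^\top$ in part~(a) or $\frac{\sqrt{n}}{(\log n)^{\rho-1/2}}$ times the pair in part~(b), is thereby realized as the terminal value of a discrete stochastic integral against $\Delta{\bf M}_{k+1}$. Boundedness $|\Delta{\bf M}_{k+1}|\le 2$ supplies the conditional Lindeberg condition, and Theorem~\ref{th10} combined with ${\bf W}^\top{\bf 1}={\bf 1}$ gives
\begin{equation*}
\mathbb{E}[\Delta{\bf M}_{k+1}\Delta{\bf M}_{k+1}^\top\mid\mathcal{F}_k]=\mathrm{Diag}\bigl([{\bf W}^\top{\bf Z}_k]_i(1-[{\bf W}^\top{\bf Z}_k]_i)\bigr)\to Z_\infty(1-Z_\infty){\bf I}\ \text{a.s.}
\end{equation*}
Block by block, the predictable quadratic variation of the scaled martingale is asymptotic to $Z_\infty(1-Z_\infty)$ times integrals of products of two Jordan semigroup kernels of the type $\int_0^1 u^{c(2-\lambda_u-\lambda_v)-2}(-\log u)^{t+s}\,du$, and the stable martingale CLT as invoked in \cite{r1,r2} yields the stated Gaussian limits. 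The phase transition between parts (a) and (b) is decided by whether the integral $\int_1^n k^{-1}[\log(n/k)]^{2\rho-2}(k/n)^{c(2-\lambda_u-\lambda_v)}\,dk$ converges: it does for every index pair when $\tau<1-(2c)^{-1}$, giving the $\sqrt{n}$ rate of (a), but diverges logarithmically along the critical pairs $\lambda_u+\lambda_v=2-c^{-1}$ with maximal Jordan order $\rho$ when $\tau=1-(2c)^{-1}$, forcing the slower rate and the block-supported, degenerate covariance of (b).

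The main obstacle is the explicit closed-form evaluation of the covariance entries \eqref{eq1}, \eqref{eq65}, \eqref{eq66}. The block structure of ${\bf P}\Phi_{k,n}{\bf Q}^\top$ produces, for each pair $(u,v)$, a double sum indexed by generalized eigenvector positions $(t,s)\in\{0,\dots,\rho_u-1\}\times\{0,\dots,\rho_v-1\}$; extracting the coefficients $c^{t+s+2}(t+s)!/[-1+c(2-\lambda_u-\lambda_v)]^{t+s+1}$ in $\widehat{\bf S}_{\bf ZZ}$ reduces to Gamma-integrals, but the cross term $\widehat{\bm\Sigma}_{\bf ZN}$ mixes these with the slower $(k+1)^{-1}$-driven semigroup of $\widehat{\bf N}_n$ and with the correction $(1-c){\bf q}_1^\top\Delta{\bf M}_{k+1}{\bf 1}$ coming from the subtraction of $\widetilde{Z}_n{\bf 1}$, producing the three rational expressions in $(\lambda_u,\lambda_v,c)$ visible in \eqref{eq65}. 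The entry $\widehat{\bm\Sigma}_{\bf NN}$ is the most demanding, since it couples the two semigroups against each other and forces the combinatorial function $\mathcal{H}(\cdot)$; matching the stated closed form against the three nested sums over $(u,v,t,s)$ is where the bulk of the technical labour will lie.
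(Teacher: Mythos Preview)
Your strategy is essentially the paper's: iterate the coupled recursions for $(\widehat{\bf Z}_n,\widehat{\bf N}_n)$ in Jordan coordinates, express the scaled pair as a martingale transform with kernels built from the Jordan semigroup, verify conditional covariance convergence via Theorem~\ref{th10}, and invoke the stable martingale CLT (the paper's Theorem~\ref{th6}). The paper's execution differs in one organizational respect and one substantive detail you have omitted. Organizationally, rather than iterating the two recursions separately, the paper stacks them into a single $(2N)$-dimensional vector ${\bm\theta}_n=(\widehat{\bf Z}_n^\top,\widehat{\bf N}_n^\top)^\top$ governed by a block-triangular drift ${\bf S}_U$ in an extended basis $({\bf P}_\theta,{\bf Q}_\theta)$; the iterated kernel ${\bf C}_{k+1,n}=\prod_{m=k+1}^n({\bf I}-r_m{\bf S}_U)$ then has blocks ${\bf C}^{11},{\bf C}^{31},{\bf C}^{33}$ whose asymptotics (Lemma~\ref{leww1}) encode exactly the convolution of the $\Phi_{k,n}$-semigroup with the $k/n$-semigroup that you would otherwise have to compute by hand, and this is what cleanly produces the ${\cal H}(\cdot)$ structure in $\widehat{\bm\Sigma}_{\bf NN}$. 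Substantively, to write ${\bm\theta}_n$ with a \emph{single} step size $r_n$, the paper replaces $(n+1)^{-1}$ in the $\widehat{\bf N}_n$-recursion by $r_n/c$ and carries the discrepancy as an explicit remainder ${\bf R}_{n+1}=\bigl(\tfrac{1}{(n+1)r_n}-\tfrac{1}{c}\bigr)(\cdots)$; showing that $t_n\sum r_k{\bf C}_{k+1,n}(\cdots){\bf R}_{\theta,k+1}\to 0$ is precisely where the strengthened hypothesis $nr_n-c=O(n^{-1})$ of Assumption~\ref{as1} enters. Your proposal, which keeps the raw step $(n+1)^{-1}$ for $\widehat{\bf N}_n$, will need an equivalent control, and you should flag this rather than treat $r_n$ and $c/n$ as interchangeable.
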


	Assuming the validity of Theorem \ref{th10}, the second-order convergence analysis of $\widetilde{Z}_n$, $\widehat{\mathbf{Z}}_n$ and $\widehat{\mathbf{N}}_n$ in Theorems \ref{th8}--\ref{th2} requires handling of the conditional second moment properties of the martingale difference sequence. For clarity in the subsequent proof, we first establish the following foundational results:
	\begin{equation}\label{eq19}
		{\mathbb E}[(\Delta{\bf M}_{n+1})(\Delta{\bf M}_{n+1})^\top|{\cal F}_n]\overset{a.s.}{\to}Z_\infty(1-Z_\infty){\bf I}.
	\end{equation}
	Recall that $\Delta{\bf M}_{n+1}={\bf X}_{n+1}-{\mathbb E}({\bf X}_{n+1}|{\cal F}_n)$ and ${\mathbb E}({\bf X}_{n+1}|{\cal F}_n)={\bf W}^\top{\bf Z}_n$, we have
	\begin{align*}
		{\mathbb E}[(\Delta{\bf M}_{n+1})(\Delta{\bf M}_{n+1})^\top|{\cal F}_n]=&{\mathbb E}\{[{\bf X}_{n+1}-{\mathbb E}({\bf X}_{n+1}|{\cal F}_n)][{\bf X}_{n+1}-{\mathbb E}({\bf X}_{n+1}|{\cal F}_n)]^\top|{\cal F}_n\}\\
		=&{\mathbb E}({\bf X}_{n+1}{\bf X}^\top_{n+1}|{\cal F}_n)-{\mathbb E}({\bf X}_{n+1}|{\cal F}_n){\mathbb E}({\bf X}^\top_{n+1}|{\cal F}_n).
	\end{align*}
	For all distinct pairs $i,j \in \{1,\ldots,N\}$, the off-diagonal entries of ${\mathbb E}[(\Delta{\bf M}_{n+1})(\Delta{\bf M}_{n+1})^\top|{\cal F}_n]$ satisfy
	\begin{equation*}
		{\mathbb E}(X_{n+1,i}X_{n+1,j}|{\cal F}_n)-{\mathbb E}(X_{n+1,i}|{\cal F}_n){\mathbb E}(X_{n+1,j}|{\cal F}_n)=0.
	\end{equation*}
	For diagonal entries $i \in\{1,\ldots,N\}$, 
	\begin{equation*}
		{\mathbb E}(X^2_{n+1,i}|{\cal F}_n)-{\mathbb E}^2(X_{n+1,i}|{\cal F}_n)=\sum\limits_{h=1}^Nw_{h,j}Z_{n,h}-\bigg(\sum\limits_{h=1}^Nw_{h,j}Z_{n,h}\bigg)^2\overset{a.s.}{\to}Z_\infty(1-Z_\infty),
	\end{equation*}
	where the convergence holds by $\sum_{h=1}^{N}w_{h,j}=1$ and $Z_{n,j}\overset{a.s.}{\to}Z_\infty$ for all $j\in\{1,2,\cdots,N\}$.

	\subsection{Detailed Proof}\label{sec4.2}
	In this section, we present the proofs of Theorems \ref{th7}, \ref{th11}, \ref{th1}, \ref{th3}, \ref{th5}, \ref{th13}, \ref{th4} and Corollary \ref{co1}.
	
	\begin{proof}[Proof of Theorem \ref{th7}]
		To prove Theorem \ref{th7}, we first establish Theorem \ref{th10}. We begin by proving the first part of Theorem \ref{th10}, which concerns the convergence of $\widetilde{Z}_n$. Since ${\bf Z}_0 \in [0,1]^N$ and ${\bf Z}_n$ satisfies the recurrence relation \eqref{eq15}, it follows that ${\bf Z}_n \in [0,1]^N$ for all $n$. Note that ${\bf q}_1^{\top}{\bf p}_1=1$ and $\mathbf{p}_1=N^{-1/2}\mathbf{1}$, so we have $N^{-1/2}\mathbf{q}_1^{\top}\mathbf{1}=1$. Since the components of $\mathbf{q}_1$ are non-negative, $N^{-1/2} \mathbf{q}_1^{\top}$ can be interpreted as a weight vector. Therefore, for all $n$, we have $\min_hZ_{nh}\leq \widetilde{Z}_n=$ $N^{-1/2} \mathbf{q}_1^{\top} \mathbf{Z}_n \leq \max_hZ_{nh}$, which implies $\widetilde{Z}_n \in[0,1]$. From \eqref{eq16}, we have
		\begin{equation*}
			\widetilde{Z}_{n+1}-\widetilde{Z}_n=r_n N^{-1/2} \mathbf{q}_1^{\top} \Delta \mathbf{M}_{n+1},
		\end{equation*}
		thus, $\widetilde{Z}_n$ is a bounded martingale that converges almost surely to a random variable $Z_{\infty}$ taking values in $[0,1]$.

		To prove the second part of Theorem \ref{th10}, concerning the almost sure convergence $\widehat{\bf Z}_n\overset{a.s.}{\to} {\bf 0}$, we proceed as follows. By Lemma \ref{le7}, there exists an invertible block-diagonal matrix ${\bf D}_\beta$ such that
		\begin{equation}\label{eq21}
			{\bf Q}_\beta = {\bf Q}{\bf D}_\beta, \ \  {\bf W}{\bf Q}_\beta = {\bf Q}_\beta{\bf J}_\beta,\ \  {\rm and}\ \ \|{\bf J}_\beta\|_2 \leq \frac{1+\max_{s\in\{1,\cdots,T\}}|\lambda_s|}{2}<1,
		\end{equation}
		where ${\bf J}_\beta$ is associated with ${\bf D}_\beta$. Then, we have $\widehat{\bf Z}_n={\bf P}{\bf Q}^\top{\bf Z}_n={\bf P}({\bf D}^{-1}_\beta)^\top{\bf Q}^\top_\beta{\bf Z}_n$. Defining ${\bf Z}_{{\bf Q}_\beta,n}={\bf Q}^\top_\beta{\bf Z}_n$, it suffices to show that ${\bf Z}_{{\bf Q}_\beta,n}\overset{a.s.}{\to}0$ to conclude $\widehat{\bf Z}_n\overset{a.s.}{\to}0$. 
		Applying left multiplication by ${\bf Q}^\top_\beta$ to both sides of \eqref{eq6} yields
		\begin{align}\label{eq8}
			{\bf Z}_{{\bf Q}_\beta,n+1}-{\bf Z}_{{\bf Q}_\beta,n}&=-r_n({\bf Q}^\top_\beta-{\bf Q}^\top_\beta{\bf W}^\top){\bf Z}_n+r_n{\bf Q}^\top_\beta\Delta{\bf M}_{n+1}\non\\
			&=-r_n({\bf Q}^\top_\beta-{\bf J}^\top_\beta{\bf Q}^\top_\beta){\bf Z}_n+r_n{\bf Q}^\top_\beta\Delta{\bf M}_{n+1}\non\\
			&=-r_n({\bf I}-{\bf J}^\top_\beta){\bf Z}_{{\bf Q}_\beta,n}+r_n{\bf Q}^\top_\beta\Delta{\bf M}_{n+1}.
		\end{align}
		It follows that
		\begin{align}
			&\mathbb{E}[\|{\bf Z}_{{\bf Q}_\beta,n+1}\|^2|{\cal F}_n]=\mathbb{E}\big[\overline{\bf Z}^\top_{{\bf Q}_\beta,n+1}{\bf Z}_{{\bf Q}_\beta,n+1}|{\cal F}_n\big]\non\\
			=&\overline{\bf Z}^\top_{{\bf Q}_\beta,n}\big({\bf I}-r_n({\bf I}-\overline{\bf J}_\beta)\big)\big({\bf I}-r_n({\bf I}-{\bf J}^\top_\beta)  \big){\bf Z}_{{\bf Q}_\beta,n}+r^2_n\mathbb{E}\big[\Delta{\bf M}^\top_{n+1}\overline{\bf Q}_\beta{\bf Q}^\top_\beta\Delta{\bf M}_{n+1}  |{\cal F}_n\big]\non\\
			=&\|\big((1-r_n){\bf I}+r_n{\bf J}^\top_\beta\big) {\bf Z}_{{\bf Q}_\beta,n}\|^2+r^2_n\xi_n\non   \\
			\le&\big((1-r_n)+r_n\|{\bf J}_{\beta}\|_2  \big)^2\|{\bf Z}_{{\bf Q}_\beta,n}\|^2+r^2_n\xi_n   \label{eq22}\\
			\le&\left[1-\left(1-\frac{1+\max_{s\in\{1,\cdots,T\}}|\lambda_s|}{2}\right)r_n\right]^2\|{\bf Z}_{{\bf Q}_\beta,n}\|^2+r^2_n\xi_n   \label{eq23}\\
			\le&(1-ar_n)\|{\bf Z}_{{\bf Q}_\beta,n}\|^2+r^2_n\xi_n,   \label{eq24}
		\end{align}
		where $\xi_n$ is an $\mathcal{F}_n$-measurable bounded random variable, and $a=(1-\max_{s\in\{1,\cdots,T\}}|\lambda_s|)/2>0$. The inequality \eqref{eq22} follows from the submultiplicative property of matrix norms, 
		\eqref{eq23} is obtained directly from \eqref{eq21}, and \eqref{eq24} holds by Assumption \ref{as2}. Hence, there exists a constant $C$ such that
		\begin{equation}\label{eq25}
			\mathbb{E}[\|{\bf Z}_{{\bf Q}_\beta,n+1}\|^2|{\cal F}_n]\le (1-ar_n)\|{\bf Z}_{{\bf Q}_\beta,n}\|^2+Cr^2_n.
		\end{equation}
		Since $\sum_{n=1}^\infty r^2_n<+\infty$, it follows from \cite{r8} that $(\|\mathbf{Z}_{\mathbf{Q}_{\beta,n}}\|)_n$ 
		forms an almost supermartingale and therefore converges almost surely to a finite random variable. Taking expectations on both sides of \eqref{eq23} yields
		\begin{equation*}
			\mathbb{E}\|{\bf Z}_{{\bf Q}_\beta,n+1}\|^2\le (1-ar_n)\mathbb{E}\|{\bf Z}_{{\bf Q}_\beta,n}\|^2+Cr^2_n,
		\end{equation*}
		Furthermore, since $\sum_{n=1}^\infty r_n=+\infty$, by Lemma \ref{le9}, $\mathbb{E}\|{\bf Z}_{{\bf Q}_\beta,n}\|^2$ converges to $0$. Combining this with the almost sure convergence of $\|{\bf Z}_{{\bf Q}_\beta,n}\|^2$, we conclude that
		\begin{equation*}
			\|{\bf Z}_{{\bf Q}_\beta,n}\|^2\overset{a.s.}{\to}0,\ \ \text{ and\ consequently,}\ \ {\bf Z}_{{\bf Q}_\beta,n}\overset{a.s.}{\to}{\bf 0}.
		\end{equation*}
		Thus, Theorem~\ref{th10} is proved, and it follows that ${\bf Z}_n$ converges almost surely to $Z_\infty {\bf 1}$.
		
		We next establish the almost sure convergence of ${\bf N}_n$. Recall the recursive relation
		\begin{equation*}
			{\bf N}_n=\frac{1}{n}\sum\limits_{k=1}^n {\bf X}_k,
		\end{equation*}
		and note that
		\begin{equation*}
			\mathbb{E}({\bf X}_k|{\cal F}_{k-1})={\bf W}^\top{\bf Z}_k\overset{a.s.}{\to}Z_\infty{\bf W}^\top{\bf 1}=Z_\infty{\bf 1}.
		\end{equation*}
		Applying Lemma \ref{le10} with $Y_k=X_{k,h}$, $v_{n,k}=\frac{1}{n}$ and $c_k = 1$ for all $h\in\{1, 2, \dots, N\}$, we then obtain that ${\bf N}_n$ converges to $Z_\infty {\bf 1}$ almost surely.
	\end{proof}

	\begin{proof}[Proof of Theorem \ref{th11} and Corollary \ref{co1}]
		We analyze the properties of the dominant right eigenvector $\mathbf{q}_1$ of the adjacency matrix $\mathbf{W}$. By partitioning $\mathbf{q}_1$ according to the block structure of the columns of $\mathbf{W}$, we write ${\bf q}_1 = ({\bf q}^\top_{11},{\bf q}^\top_{12},\dots,{\bf q}^\top_{1S})^\top$. Since $\mathbf{W}$ is a block upper triangular matrix and 
		$\max_{h\in\{2,\cdots,S\}}\|{\bf W}_{hh}\|_1<1$, it follows from the eigenvalue equation $\mathbf{W} \mathbf{q}_1=\mathbf{q}_1$ that
		\begin{equation}\label{eq7}
			{\bf W}_{11}{\bf q}_{11}={\bf q}_{11},\ \ {\rm and}\ {\bf q}_{12}=\cdots={\bf q}_{1S}={\bf 0}.
		\end{equation}
		From the almost sure convergence of $\widetilde{Z}_n$, we further obtain
		\begin{equation*}
			N^{-1/2}{\bf q}^\top_{11}{\bf Z}^{(1)}_n\overset{a.s.}{\to}Z_\infty,
		\end{equation*}
		where ${\bf Z}^{(1)}_n$ represents the agent inclination corresponding to the subgroup ${\cal G}_1$. Recalling the update equations \eqref{eq49}--\eqref{eq15}, and focusing on ${\bf Z}^{(1)}_n$, we have
		\begin{equation*}
			{\bf Z}^{(1)}_n=(1-r_{n-1}){\bf Z}^{(1)}_{n-1}+r_{n-1}{\bf X}^{(1)}_n,\ \ {\rm and}\ \mathbb{P}({\bf X}^{(1)}_{n}={\bf 1}|{\cal F}_{n-1})={\bf W}^\top_{11}{\bf Z}^{(1)}_{n-1},
		\end{equation*}	
		where ${\bf X}^{(1)}_n$ represents the decisions of agents in subgroup ${\cal G}_1$ at time $n$. Consequently, the process $({\bf Z}^{(1)}_n)_n$ depends exclusively on interactions within ${\cal G}_1$, implying that the limiting variable $Z_\infty$ is determined entirely by subgroup ${\cal G}_1$, while the influence of other subgroups ${\cal G}_k$ ($k \in {2, \dots, S}$) vanishes asymptotically.
		
		Now we prove Corollary \ref{co1}. By Lebesgue's dominated convergence theorem, we have
		\begin{align*}
			{\mathbb E}(Z_\infty)=&{\mathbb E}(\lim\limits_{n\to\infty}\widetilde{Z}_n)={\mathbb E}(\lim\limits_{n\to\infty}N^{-1/2}{\bf q}^\top_1{\bf Z}_n)=\lim\limits_{n\to\infty}{\mathbb E}(N^{-1/2}{\bf q}^\top_1{\bf Z}_n)\\
			=&\lim\limits_{n\to\infty}{\mathbb E}(N^{-1/2}{\bf q}^\top_{11}{\bf Z}^{(1)}_n)=N^{-1/2}{\bf q}^\top_{11}\mathbb{E}({\bf Z}^{(1)}_0),
		\end{align*}
		where the last equality follows from the martingale property of the sequence $({\bf q}^\top_{11}{\bf Z}^{(1)}_n)_n$. This completes the proof.
	\end{proof}

	\begin{proof}[Proof of Theorem \ref{th1}]	
		Recalling the decomposition ${\bf Z}_n=\widetilde{Z}_n{\bf 1}+\widehat{\bf Z}_n$, which is consistent with that in \cite{r1}.  According to Theorem \ref{th10}, $\widetilde{Z}_n\overset{a.s.}{\to}Z_\infty$ and $\widehat{\bf Z}_n\overset{a.s.}{\to}0$, implying that the synchronization limit $Z_\infty$ is entirely determined by the dynamics of $\widetilde{Z}_n$. Theorems 3.5 and 3.6 in \cite{r1} establish two properties of $Z_\infty$ based solely on the structure of $\widetilde{Z}_n$, and their validity does not rely on the diagonalizability of ${\bf W}$. These results thus remain applicable in the current setting. Detailed proofs are omitted.
	\end{proof}

	Next, we establish the second-order convergence of $({\bf Z}_n,{\bf N}_n)_n$, as stated in Theorems \ref{th3}, \ref{th5}, \ref{th13} and \ref{th4}. To this end, we first prove Theorems \ref{th8}, \ref{th9} and \ref{th2} separately.
	\begin{proof}[Proof of Theorem \ref{th8}]
		Theorem \ref{th8} parallels Lemma 4.1 in \cite{r1}, which characterizes the second-order asymptotic behavior of $\widetilde{Z}_n$. The proof of Lemma 4.1 relies solely on the explicit form of $\widetilde{Z}_n$. Since the expression of $\widetilde{Z}_n$ in our setting is identical to that in \cite{r1}, the corresponding arguments remain valid in the present context.
	\end{proof}

	\begin{proof}[Proof of Theorems \ref{th9}]
		We begin by proving part (a) of Theorem \ref{th9}, which establishes the almost sure convergence of $\widehat{\bf Z}_n$. 
		Since ${\bf P}{\bf Q}^\top\widehat{\bf Z}_n=\widehat{\bf Z}_n$, left-multiplying both sides of \eqref{eq8} by ${\bf P}$ yields
		\begin{align}\label{eq9}
			\widehat{\bf Z}_{n+1}&=\widehat{\bf Z}_n-r_n{\bf P}({\bf I}-{\bf J}^\top){\bf Q}^\top\widehat{\bf Z}_n+r_n{\bf P}{\bf Q}^\top\Delta{\bf M}_{n+1}\non\\
			&=[{\bf I}-r_n{\bf P}({\bf I}-{\bf J}^\top){\bf Q}^\top]\widehat{\bf Z}_n+r_n{\bf P}{\bf Q}^\top\Delta{\bf M}_{n+1}\non\\
			&={\bf P}[{\bf I}-r_n({\bf I}-{\bf J}^\top)]{\bf Q}^\top\widehat{\bf Z}_n+r_n{\bf P}{\bf Q}^\top\Delta{\bf M}_{n+1}.
		\end{align}
		Iterating this relation for $n\ge m_0$, where $m_0$ is chosen sufficiently large such that $(1-\tau)r_j<1/2$ for all $j>m_0$, we obtain
		\begin{equation}\label{eq10}
			\widehat{\bf Z}_{n+1}={\bf A}_{m_0,n}\widehat{\bf Z}_{m_0}+\sum\limits_{k=m_0}^n {\bf A}_{k+1,n}{\bf B}_k,
		\end{equation}
		where the matrices ${\bf A}_{k+1,n}$ and ${\bf B}_k$ are given by
		\begin{equation}\label{eq2}
			{\bf A}_{k+1,n}={\bf P}\prod\limits_{j=k+1}^n[{\bf I}-r_j({\bf I}-{\bf J}^\top)]{\bf Q}^\top,\ \ {\bf A}_{n+1,n}={\bf I},\ \  \rm{and\ \ } {\bf B}_k=r_k{\bf P}{\bf Q}^\top\Delta{\bf M}_{k+1}.
		\end{equation}
        
		To analyze \eqref{eq10}, we introduce the notation
		\begin{align}\label{eq70}
			p_{n,s}=\prod\limits_{j=m_0}^{n}[1-r_j(1-\lambda_s)],\ \ \ l_{n,s}=p^{-1}_{n,s},
		\end{align}
		and define the corresponding transition matrices
		\begin{equation}\label{eq78}
			{\bf T}_{k+1,n}=\prod\limits_{j=k+1}^n[{\bf I}-r_j({\bf I}-{\bf J}^\top)],\ \ \ {\bf T}^{(s)}_{k+1,n}=\prod\limits_{j=k+1}^n[{\bf I}-r_j({\bf I}-{\bf J}^\top_s)].
		\end{equation}
        By Lemma \ref{le1}, for each $s\in\{1,\dots,T\}$ and for every index $t\in\{1,\dots,\rho_s\}$, the diagonal entries of $\mathbf{T}^{(s)}_{k+1,n}$ satisfy
		\begin{equation*}
			[{\bf T}^{(s)}_{k+1,n}]_{t,t}=\prod\limits_{j=k+1}^n[1-r_j(1-\lambda_s)]=p_{n,s}l_{k,s}.
		\end{equation*}
        In contrast, for all $t\in\{1,\dots,\rho_s\}$ and $q\in\{1,\dots,\rho_s-1\}$, the off-diagonal entry $[\mathbf{T}^{(s)}{k+1,n}]_{t,t-q}$ can be expressed as
		\begin{align}\label{eq51}
			[{\bf T}^{(s)}_{k+1,n}]_{t,t-q}=\sum\limits_{k+1\le j_1\neq\cdots\neq j_q\le n}R^{(q,s)}_{n,k}p_{n,s}l_{k,s},
		\end{align}
		where
		\begin{align*}
			R^{(q,s)}_{n,k}=\sum\limits_{k+1\le j_1\neq\cdots\neq j_q\le n}\frac{r_{j_1}\cdots r_{j_q} }{[1-r_{j_1}(1-\lambda_s)]\cdots[1-r_{j_q}(1-\lambda_s)]}.
		\end{align*}
        
		Next, we analyze the convergence of the terms $n^{\gamma/2}{\bf A}_{m_0,n}\widehat{\bf Z}_{m_0}$ and $n^{\gamma/2}\sum_{k=m_0}^n {\bf A}_{k+1,n}{\bf B}_k$ in \eqref{eq10} separately. From Lemma \ref{le1}, we have that for all $1/2<\gamma<1$ and $0<\varepsilon<1$,
		\begin{align}\label{eq18}
			&n^{\gamma/2}\|{\bf A}_{m_0,n}\widehat{\bf Z}_{m_0}\|=O_{a.s.}(\|{\bf T}_{m_0,n}\|_1)\non\\
            &\qquad\qquad\qquad\quad\,\,=
			O_{a.s.}\Big(n^{(1-\gamma)(\rho-1)}\exp \Big[-(1-\varepsilon) \frac{c(1-\tau^\ast)}{1-\gamma} n^{1-\gamma}\Big]\Big)\overset{a.s.}{\to}0.
		\end{align}
        We now turn to the convergence of $n^{\gamma/2}\sum_{k=m_0}^n \mathbf{A}_{k+1,n}\mathbf{B}_k$. To this end, we apply Theorem \ref{th6} with ${\cal G}_{n,k}={\cal F}_{k+1}$. To verify condition (c2), observe that
		\begin{equation*}
			\sum\limits_{k=m_0}^nn^{\gamma/2}{\bf A}_{k+1,n}{\bf B}_k(n^{\gamma/2}{\bf A}_{k+1,n}{\bf B}_k)^\top=n^\gamma\sum\limits_{k=m_0}^{n-1}{\bf A}_{k+1,n}{\bf B}_k({\bf A}_{k+1,n}{\bf B}_k)^\top+n^\gamma{\bf B}_n{\bf B}^\top_n.
		\end{equation*}
		For all $i,j\in\{1,2,\cdots,N\}$, we have $[n^\gamma{\bf B}_n{\bf B}^\top_n]_{i,j}=O(n^\gamma r^2_n)=o(1)$. Hence, it suffices to establish the convergence of $n^\gamma\sum_{k=m_0}^{n-1}{\bf A}_{k+1,n}{\bf B}_k({\bf A}_{k+1,n}{\bf B}_k)^\top$. Define
		\begin{equation}\label{eq72}
			{\bf H}_{k+1}={\bf Q}^\top\Delta{\bf M}_{k+1}\Delta{\bf M}^\top_{k+1}{\bf Q}.
		\end{equation}
		Then we have
		\begin{align*}
			&n^\gamma\sum\limits_{k=m_0}^{n-1}{\bf A}_{k+1,n}{\bf B}_k({\bf A}_{k+1,n}{\bf B}_k)^\top\\
			=&n^\gamma{\bf P}\left[\sum\limits_{k=m_0}^{n-1}r^2_k\prod\limits_{j=k+1}^n[{\bf I}-r_j({\bf I}-{\bf J}^\top)]{\bf Q}^\top\Delta{\bf M}_{k+1}\Delta{\bf M}^\top_{k+1}{\bf Q}\left(\prod\limits_{j=k+1}^n[{\bf I}-r_j({\bf I}-{\bf J}^\top)]\right)\right]{\bf P}^\top\\
			=&{\bf P}\left[n^\gamma\sum\limits_{k=m_0}^{n-1}r^2_k\begin{pmatrix}
				{\bf T}^{(1)}_{k+1,n} & \cdots & {\bf 0}\\
				\vdots & \ddots & \vdots\\
				{\bf 0} & \cdots & {\bf T}^{(T)}_{k+1,n}
			\end{pmatrix}\begin{pmatrix}
				{\bf H}^{(1,1)}_{k+1} & \cdots & {\bf H}^{(1,T)}_{k+1}\\
				\vdots & \ddots & \vdots\\
				{\bf H}^{(T,1)}_{k+1} & \cdots & {\bf H}^{(T,T)}_{k+1}
			\end{pmatrix}
			\begin{pmatrix}
				\big({\bf T}^{(1)}_{k+1,n}\big)^\top & \cdots & {\bf 0}\\
				\vdots & \ddots & \vdots\\
				{\bf 0} & \cdots & \big({\bf T}^{(T)}_{k+1,n}\big)^\top
			\end{pmatrix}\right]{\bf P}^\top.
		\end{align*}
        For notational convenience, we denote the term inside the brackets in the above expression by $\widehat{\mathbf{S}}_n$. Recall that ${\cal I}_0=0$ and ${\cal I}_t=\sum_{k=1}^t\rho_k$. For any $u,v\in{1,\dots,T}$ and indices $i$ and $j$ satisfying ${\cal I}{u-1}<i\le{\cal I}u$ and ${\cal I}_{v-1}<j\le{\cal I}_v$, we introduce the shifted indices
		\begin{equation*}
			\tilde{i}=i-{\cal I}_{u-1},\ \ \tilde{j}=j-{\cal I}_{v-1}.
		\end{equation*}
		Then, by Lemmas \ref{le1} and \ref{le13}, we obtain the $(i,j)$th entry of matrix $\widehat{\bf S}_n$ is
		\begin{align}\label{eq62}
			&[\widehat{\bf S}_n]_{i,j}=n^\gamma\sum\limits_{k=m_0}^nr^2_k\sum\limits_{s=0}^{\tilde{j}-1}\sum\limits_{t=0}^{\tilde{i}-1}[{\bf T}^{(u)}_{k+1,n}]_{\tilde{i},\tilde{i}-t}[{\bf H}^{(u,v)}_{k+1}]_{\tilde{i}-t,\tilde{j}-s}[{\bf T}^{(v)}_{k+1,n}]_{\tilde{j},\tilde{j}-s}\non\\
			=&n^\gamma\sum\limits_{k=m_0}^nr^2_k\sum\limits_{s=0}^{\tilde{j}-1}\sum\limits_{t=0}^{\tilde{i}-1}      p_{n,u}l_{k,u}R^{(t,u)}_{n,k}[{\bf H}^{(u,v)}_{k+1}]_{\tilde{i}-t,\tilde{j}-s}p_{n,v}l_{k,v}R^{(s,v)}_{n,k}\non\\
			=&\sum\limits_{s=0}^{\tilde{j}-1}\sum\limits_{t=0}^{\tilde{i}-1}n^\gamma p_{n,u}p_{n,v}\sum\limits_{k=m_0}^nr^2_kl_{k,u}l_{k,v}R^{(t,u)}_{n,k}R^{(s,v)}_{n,k}[{\bf H}^{(u,v)}_{k+1}]_{\tilde{i}-t,\tilde{j}-s}\non\\
			=&
			\sum\limits_{s=0}^{\tilde{j}-1}\sum\limits_{t=0}^{\tilde{i}-1}n^\gamma p_{n,u}p_{n,v}\sum\limits_{k=m_0}^nr^2_k  \left[c/(1-\gamma)\right]^{s+t}(n^{1-\gamma}-k^{1-\gamma})^{s+t} l_{k,u}l_{k,v}[{\bf H}^{(u,v)}_{k+1}]_{\tilde{i}-t,\tilde{j}-s}.
		\end{align}
		Here, strictly speaking, in the above expression, there should be a factor $\psi(k,n,\gamma,\lambda_u,\lambda_v)$. Since for any fixed $k_0$, the total contribution of terms with $k\le k_0$ is $o(1)$ and the function $\psi$ tends to 1 as $k\to\infty$, we may replace $\psi_k$ by 1.

		We now establish the convergence of $[\widehat{\bf S}_n]_{i,j}$ by applying Lemma \ref{le10}. The expression in \eqref{eq62} for fixed $s$ and $t$ can be written in the form $\sum_{k=m_0}^{n-1}\frac{v_{n,k}Y_{k+1}}{c_k}$, where
		\begin{align}
			Y_{k+1}&=[{\bf H}^{(u,v)}_{k+1}]_{\tilde{i}-t,\tilde{j}-s},\ \ c_k=\frac{1}{k^\gamma r^2_k},\ \ \text{and}\non\\
            v_{n,k}&=
			\Big(\frac{c}{1-\gamma}\Big)^{s+t}\left(\frac{n}{k}\right)^\gamma (n^{1-\gamma}-k^{1-\gamma})^{s+t}p_{n,u}p_{n,v}l_{k,u}l_{k,v}.\label{eq50}
		\end{align}
		Now we verify the conditions of Lemma \ref{le10}. From \eqref{eq19}, we have
		\begin{align*}
			\mathbb{E}(Y_n|{\cal F}_{n-1})=&\mathbb{E}([{\bf H}^{(u,v)}_n]_{\tilde{i}-t,\tilde{j}-s}|{\cal F}_{n-1})\\
			=&{\bf q}^\top_{i-t+1}\mathbb{E}[\Delta{\bf M}_n (\Delta{\bf M}_n)^\top|{\cal F}_{n-1}]{\bf q}_{j-s+1}\\
			=&Z_\infty(1-Z_\infty){\bf q}^\top_{i-t+1}{\bf q}_{j-s+1}.
		\end{align*}
		Applying Lemma \ref{le8}, we obtain
		\begin{equation*}
			\lim\limits_{n}\sum\limits_{k=m_0}^{n-1}\frac{v_{n,k}}{c_k}=
			\frac{c(t+s)!}{(2-\lambda_u-\lambda_v)^{t+s+1}}.
		\end{equation*}
		Moreover, by choosing $u=1$ in part (a) of Lemma \ref{le12} and Lemma \ref{le14}, we directly obtain
		\begin{equation*}
			\lim\limits_{n}v_{n,k}=0,\ \   \sum\limits_{k=1}^n\frac{|v_{n,k}|}{c_k}=O(1),\ \ \sum\limits_{k=1}^{n}|v_{n,k}-v_{n,k-1}|=O(1).
		\end{equation*}
		Thus, all the conditions of Lemma \ref{le10} are satisfied, which ensures the convergence of $[\widehat{\bf S}_n]_{i,j}$. Hence, condition (c2) of Theorem \ref{th6} is verified. We now turn to the verification of condition (c3). Recalling the definitions of ${\bf A}_{k+1,n}$ and ${\bf B}_k$, we note that there exists a constant $K>0$ such that
		\begin{equation*}
			\left|{\bf A}_{k+1,n}{\bf B}_k\right|\le K r_k|{\bf T}_{k+1,n}|.	
		\end{equation*}
		Then, for any $u>1$, we have
		\begin{align*}
			&\Big(\sup_{m_0\leq k\leq n}\left|n^{\gamma/2}{\bf A}_{k+1,n}{\bf B}_k\right|\Big)^{2u}\\
			\le &n^{\gamma u}\sum_{k=m_0}^{n-1}\left|{\bf A}_{k+1,n}{\bf B}_k\right|^{2u}+n^{\gamma u}\left|{\bf A}_{n+1,n}{\bf B}_n\right|^{2u}\le K^{2u}n^{\gamma u}\sum_{k=m_0}^{n-1}r^{2u}_k\left|{\bf T}_{k+1,n}\right|^{2u}+n^{\gamma u}\left|{\bf B}_n\right|^{2u} \\
			\le&\max\limits_{v\in\{1,\cdots,T\}}n^{\gamma u} O\Big(\left|p_{n,v}\right|^{2u} \sum_{k=m_0}^{n-1}r_k^{2u}(n^{1-\gamma}-k^{1-\gamma})^{\rho_vu}\left|l_{k,v}\right|^{2u}\Big)+n^u O\big(r_n^{2u}\big),
		\end{align*}
		where the final bound follows from \eqref{eq51} and Lemma \ref{le13}. Note that the term $n^u O\big(r_n^{2u}\big)=o(1)$. Furthermore, Lemma \ref{le12} yields
		\begin{equation*}
			\Big(\sup_{m_0\le k \le n}\left|n^{\gamma/2}{\bf A}_{k+1,n}{\bf B}_k\right|\Big)^{2u}=O\big(n^{\gamma u}n^{-\gamma(2u-1)}\big).
		\end{equation*}
		which vanishes for all $u>1$. Consequently, all the required conditions of Theorem \ref{th6} are verified. This completes the proof of part (a) of Theorem \ref{th9}.

		We now proceed to the proof of part (b) of Theorem \ref{th9}. This result has already been established in Theorem 4.2 of \cite{r2}, where the following decomposition was obtained,
		\begin{equation*}
			n^{\gamma-1/2}\widehat{\bf N}_n=n^{\gamma-3/2}\sum\limits_{k=1}^n{\bf T}_k+{\bf W}^\top{\bf Q}_n,
		\end{equation*}	
		with
		\begin{equation*}
			{\bf T}_k=\Delta{\bf M}_k+k(\widetilde{Z}_{k-1}-\widetilde{Z}_k){\bf 1}=\Delta{\bf M}_k-N^{-1/2}kr_k\left({\bf v}^\top_1\Delta{\bf M}_k{\bf 1}\right),\ \ \ {\bf Q}_n=n^{\gamma-3/2}\sum\limits_{k=1}^n\widehat{Z}_{k-1}.
		\end{equation*}
		The paper \cite{r2} showed that
		\begin{equation*}
			n^{\gamma-3/2}\sum\limits_{k=1}^n{\bf T}_k\overset{L}{\to}{\cal N}(0,\widehat{\bm \Gamma}_\gamma),
		\end{equation*}
		where the proof relies on the structure of $\Delta{\bf M}_k$ and $\widetilde{Z}_k$. Since both $\Delta{\bf M}_k$ and $\widetilde{Z}_k$ retain the same form under the present model, the convergence of $n^{\gamma-3/2}\sum_{k=1}^n{\bf T}_k$  remains valid. 
		In addition, it was shown that ${\bf Q}_n\overset{P}{\to}0$ under the condition that $\lim\limits_{n\to\infty}\mathbb{E}[\|\widehat{\bf Z}_n\|^2]=O(n^{-\gamma})$ as $n\to\infty$. The required moment condition is ensured by \eqref{eq62} and Lemma \ref{le12}. Hence, by Slutsky's theorem, it follows that
		\begin{equation*}
			n^{\gamma-1/2}\widehat{\bf N}_n\overset{L}{\to}{\cal N}(0,\widehat{\bm \Gamma}_\gamma).
		\end{equation*}
		Theorem \ref{th9} is proved.
	\end{proof}

		We now proceed to prove Theorem \ref{th3} by leveraging the convergence properties established in Theorems \ref{th8} and \ref{th9}.
	\begin{proof}[Proof of Theorems \ref{th3}]
		Recall the decomposition of ${\bf Z}_n$ and ${\bf N}_n$ as
		\begin{equation*}
			{\bf Z}_n=\widetilde{Z}_n{\bf 1}+\widehat{\bf Z}_n,\ \ \ {\bf N}_n=\widetilde{Z}_n{\bf 1}+\widehat{\bf N}_n.
		\end{equation*}
		By Theorem \ref{th8}, we have
		\begin{equation*}
			n^{\gamma-\frac{1}{2}}(\widetilde{Z}_n-Z_\infty)\to{\cal N}(0,Z_\infty(1-Z_\infty)\widetilde{\sigma}^2_\gamma))\ \ \ \ \ \ \text{\textit{stably}}.
		\end{equation*}
		By Theorem \ref{th9}, we have
		\begin{equation*}
			n^{\gamma/2}\widehat{{\bf Z}}_n\to {\cal N}({\bf 0},Z_\infty(1-Z_\infty)\widehat{\bm \Sigma}_\gamma),\ \ \ \text{and}\ \  n^{\gamma-\frac{1}{2}}\widehat{{\bf N}}_n\to {\cal N}({\bf 0},Z_\infty(1-Z_\infty)\widehat{\bm \Gamma}_\gamma)\ \ \ \ \ \ \text{\textit{stably}}.
		\end{equation*}
		Note that
		\begin{align*}
			n^{\gamma-\frac{1}{2}}\begin{pmatrix}
				{\bf Z}_n-Z_\infty{\bf 1}\\ {\bf N}_n-Z_\infty{\bf 1}
			\end{pmatrix}=&n^{\gamma-\frac{1}{2}}\begin{pmatrix}
				(\widetilde{Z}_n-Z_\infty){\bf 1}+\widehat{\bf Z}_n\\ {\bf N}_n-\widetilde{Z}_n{\bf 1}+(\widetilde{Z}_n-Z_\infty){\bf 1}
			\end{pmatrix}\\
			=&n^{\gamma-\frac{1}{2}}\begin{pmatrix}
				(\widetilde{Z}_n-Z_\infty){\bf 1}\\ \widehat{\bf N}_n+(\widetilde{Z}_n-Z_\infty){\bf 1}\end{pmatrix}
			+n^{\frac{\gamma-1}{2}}n^{\frac{\gamma}{2}}\begin{pmatrix}
				\widehat{\bf Z}_n\\ {\bf 0}
			\end{pmatrix},
		\end{align*}
		The second term converges to $0$ in probability. By Lemma \ref{le11}, the first term satisfies
		\begin{equation*}
			n^{\gamma-\frac{1}{2}}\begin{pmatrix}
				(\widetilde{Z}_n-Z_\infty){\bf 1}\\ {\bf N}_n-\widetilde{Z}_n{\bf 1}+(\widetilde{Z}_n-Z_\infty){\bf 1}\end{pmatrix}\to{\cal N}\left({\bf 0},Z_\infty(1-Z_\infty)\begin{pmatrix}
				\widetilde{\bm\Sigma}_\gamma & \;\;\widetilde{\bm\Sigma}_\gamma\\ \widetilde{\bm\Sigma}_\gamma & \;\;\widetilde{\bm\Sigma}_\gamma+\widehat{\bm \Gamma}_\gamma
			\end{pmatrix}\right),\ \ \ \ \ \text{\textit{stably}}.
		\end{equation*}
		Hence, Theorem \ref{th3} follows directly from Slutsky's theorem.
	\end{proof}

	\begin{proof}[Proof of Theorem \ref{th2}]

		From \eqref{eq3} we have
		\begin{equation*}
			{\bf N}_{n+1}-{\bf N}_n=-\frac{1}{n+1}({\bf N}_n-{\bf W}^\top{\bf Z}_n)+\frac{1}{n+1}\Delta{\bf M}_{n+1},
		\end{equation*}
		By substituting equation \eqref{eq61} together with the expression of $\widetilde{Z}_n$ into the above, we obtain
		\begin{align*}
			\widehat{\bf N}_{n+1}-\widehat{\bf N}_n=&-\frac{1}{n+1}(\widehat{\bf N}_n-{\bf W}^\top\widehat{\bf Z}_n)+\frac{1}{n+1}\Delta{\bf M}_{n+1}-(\widetilde{Z}_{n+1}-\widetilde{Z}_n){\bf 1}\\
			=&-\frac{1}{n+1}(\widehat{\bf N}_n-{\bf P}{\bf J}^\top{\bf Q}^\top\widehat{\bf Z}_n)+\frac{1}{n+1}\Delta{\bf M}_{n+1}-(\widetilde{Z}_{n+1}-\widetilde{Z}_n){\bf 1}.
		\end{align*}
		The recursion can be reformulated as
		\begin{equation*}
			\widehat{\bf N}_{n+1}=(1-r_nc^{-1})\widehat{\bf N}_n+r_nc^{-1}{\bf P}{\bf J}^\top{\bf Q}^\top\widehat{\bf Z}_n+r_n(c^{-1}{\bf I}-N^{-1/2}{\bf 1}{\bf v}^\top_1)\Delta{\bf M}_{n+1}+r_n{\bf R}_{n+1},
		\end{equation*}
		where the remainder term ${\bf R}_{n+1}$ is given by
		\begin{equation}\label{eq74}
			{\bf R}_{n+1}=\left(\frac{1}{(n+1)r_n}-\frac{1}{c}\right)(-\widehat{\bf N}_n+{\bf P}{\bf J}^\top{\bf Q}^\top\widehat{\bf Z}_n+\Delta{\bf M}_{n+1}).
		\end{equation}
		We recall that the dynamics of $\widehat{\bf Z}_n$ evolve according to
		\begin{align}
			\widehat{\bf Z}_{n+1}=\left[{\bf I}-r_n{\bf P}({\bf I}-{\bf J}^\top){\bf Q}^\top\right]\widehat{\bf Z}_n+r_n{\bf P}{\bf Q}^\top\Delta{\bf M}_{n+1}.
		\end{align}
		To unify these two updates, we define the joint process
		\begin{equation*}
			{\bm \theta}_n=\begin{pmatrix}
				\widehat{\bf Z}_n\\
				\widehat{\bf N}_n
			\end{pmatrix},\ \ \Delta{\bf M}_{\theta,n}=\begin{pmatrix}
				\Delta{\bf M}_n\\
				\Delta{\bf M}_n
			\end{pmatrix},\ \ \ \text{and  }\  {\bf R}_{\theta,n}=\begin{pmatrix}
				{\bf 0}\\
				{\bf R}_n
			\end{pmatrix}.
		\end{equation*}
		Then, the joint dynamics can be written as
		\begin{equation*}
			{\bm \theta}_{n+1}=({\bf I}-r_n{\bf U}){\bm\theta}_n+r_n({\bf V}\Delta{\bf M}_{{\theta},n+1}+{\bf R}_{{\theta},n+1}),
		\end{equation*}
		where the matrices ${\bf U}$ and ${\bf V}$ are given by
		\begin{equation*}
			{\bf U}=\begin{pmatrix}
				{\bf P}({\bf I}-{\bf J}^\top){\bf Q}^\top & \;\;{\bf 0}\\
				-c^{-1}{\bf P}{\bf J}^\top{\bf Q}^\top & \;\;c^{-1}{\bf I}
			\end{pmatrix}\ \ \text{and}\ \  {\bf V}=\begin{pmatrix}
				{\bf P}{\bf Q}^\top  & \;\;{\bf 0}\\
				{\bf 0} & \;\;(c^{-1}-1){\bf p}_1{\bf q}^\top_1+c^{-1}{\bf P}{\bf Q}^\top
			\end{pmatrix}.
		\end{equation*}
		To further simplify the analysis, we introduce two orthonormal $(2N)\times(2N-1)$ matrices, denoted ${\bf P}_\theta$ and ${\bf Q}_\theta$, which satisfy ${\bf Q}_\theta^\top {\bf P}_\theta={\bf P}_\theta^\top {\bf Q}_\theta={\bf I}$ and are defined as follows,
		\begin{equation*}
			{\bf P}_{\theta}=\begin{pmatrix}
				{\bf P} & {\bf 0} & {\bf 0}\\
				{\bf 0} & {\bf p}_1 & {\bf P}
			\end{pmatrix}\ \ \text{and}\ \ {\bf Q}_{\theta}=\begin{pmatrix}
				{\bf Q} & {\bf 0} & {\bf 0}\\
				{\bf 0} & {\bf q}_1 & {\bf Q}
			\end{pmatrix}.
		\end{equation*}
		Then,
		\begin{equation*}
			{\bf P}_\theta {\bf Q}^\top_\theta=\begin{pmatrix}
				{\bf P}{\bf Q}^\top & {\bf 0}\\
				{\bf 0} & {\bf I}
			\end{pmatrix}.
		\end{equation*}
		The matrices ${\bf U}$ and ${\bf V}$ can be expressed as ${\bf U}={\bf P}_\theta{\bf S}_U{\bf Q}^\top_\theta$, ${\bf V}={\bf P}_\theta{\bf S}_V{\bf Q}^\top_\theta$, where the matrices ${\bf S}_U$ and ${\bf S}_U$ are $(2N)\times(2N-1)$ matrices defined as
		\begin{equation*}
			{\bf S}_{ U}=\begin{pmatrix}
				{\bf I}-{\bf J}^\top & {\bf 0} & {\bf 0}\\
				{\bf 0}^\top & c^{-1} & {\bf 0}^\top\\
				-c^{-1}{\bf J}^\top & {\bf 0} & c^{-1}{\bf I}
			\end{pmatrix}\ \ \text{and}\ \ {\bf S}_V=\begin{pmatrix}
				{\bf I} & {\bf 0} & {\bf 0}\\
				{\bf 0}^\top & c^{-1}-1 & {\bf 0}^\top\\
				{\bf 0} & {\bf 0} & c^{-1}{\bf I}
			\end{pmatrix}.
		\end{equation*}
		Under this transformation, the dynamics of $({\bm \theta}_n)_n$ are given by
		\begin{equation*}
			{\bm \theta}_{n+1}={\bf P}_{\theta}({\bf I}-r_n{\bf S}_{U}){\bf Q}^\top_{\theta}{\bm \theta}_n+r_n{\bf V}\Delta{\bf M}_{{\theta},n+1}+r_n{\bf R}_{{\theta},n+1},
		\end{equation*}
		Note that we have chosen $m_0$ sufficiently large such that $(1-\tau)r_j<1/2$ holds for all $j>m_0$. Then, by iterating the recursion until $m_0$, we obtain
		\begin{equation}\label{eq71}
			{\bm \theta}_{n+1}={\bf P}_{\theta}{\bf C}_{m_0,n}{\bm Q}^\top_\theta{\bm\theta}_{m_0}+\sum\limits_{k=m_0}^nr_k{\bf P}_{\theta}{\bf C}_{k+1,n}{\bm Q}^\top_\theta{\bf V}\Delta{\bf M}_{\theta,k+1}+\sum\limits_{k=m_0}^nr_k{\bf P}_{\theta}{\bf C}_{k+1,n}{\bm Q}^\top_\theta {\bf R}_{\theta,k+1},
		\end{equation}
		where ${\bf C}_{n+1,n}={\bf I}$, and for $m_0-1\le k\le n-1$,
		\begin{equation}\label{eq77}
			{\bf C}_{k+1,n}=\prod\limits_{m=k+1}^n[{\bf I}-r_m{\bf S}_U]=\begin{pmatrix}
				{\bf C}^{11}_{k+1,n} & {\bf 0} & {\bf 0}\\
				{\bf 0}^\top & c^{22}_{k+1,n} & {\bf 0}^\top\\
				{\bf C}^{31}_{k+1,n} & {\bf 0} & {\bf C}^{33}_{k+1,n}
			\end{pmatrix}.
		\end{equation}
		Note that the blocks ${\bf C}^{11}_{k+1,n}$, ${\bf C}^{31}_{k+1,n}$ and ${\bf C}^{33}_{k+1,n}$ are all $(N-1)\times(N-1)$ matrices. For notational convenience, in the sequel we let $\alpha_u=1-\lambda_u$ for all $1\le u\le T$ and $F_{k+1,n}(\alpha_u)=p_{n,u}l_{k,u}$. Then, by Lemma \ref{leww1}, we have that for all $1\le u\le T$, ${\cal I}_{u-1}\le i\le {\cal I}_u$, and $0\le t\le i-1$, $1\le s\le i-1$,
		\begin{align*}
			&[{\bf C}^{11}_{k+1,n}]_{i,i-t}\sim c^t(\log n-\log k)^tF_{k+1,n}(\alpha_u),\\
			&[{\bf C}^{33}_{k+1,n}]_{i,i}=c^{22}_{k+1,n}=F_{k+1,n}(c^{-1}),\\
			&[{\bf C}^{31}_{k+1,n}]_{i,i}=\begin{cases}
				\frac{1-\alpha_u}{c\alpha_u-1}[F_{k+1,n}(c^{-1})-F_{k+1,n}(\alpha_u)]\ \ &\text{for  }c\alpha_j\neq 1,\\
				(1-c^{-1})F_{k+1,n}(c^{-1})(\log n-\log k)+O(n^{-1})\ \ &\text{for  }c\alpha_j=1,
			\end{cases}\\
			&[{\bf C}^{31}_{k+1,n}]_{i,i-s}\sim
			\left[c^{s-1}(\log n-\log k)^{s-1}-(1-\alpha_u)c^s(\log n-\log k)^s\right]\cdot\\
			&\qquad\qquad\qquad	\begin{cases}\frac{1}{c\alpha_u-1}[F_{k+1,n}(c^{-1})-F_{k+1,n}(\alpha_u)]\ \ &\text{for  }c\alpha_j\neq 1,\\
				\frac{1-c^{-1}}{1-\alpha_u}F_{k+1,n}(c^{-1})(\log n-\log k)+O(n^{-1})\ \ &\text{for  }c\alpha_j=1.
			\end{cases}
		\end{align*}
		We set 
		\begin{equation*}
			t_n=\begin{cases}
				\sqrt{n}\ \ &\text{for } \tau<1-(2c)^{-1},\\
				\frac{\sqrt{n}}{(\log n)^{\rho-1/2}}\ \ &\text{for }\tau=1-(2c)^{-1}.
			\end{cases}
		\end{equation*}	
		We next establish the convergence of $t_n{\bm\theta}_n$ by analyzing the limiting behavior of each term in \eqref{eq71}. We show that the terms $t_n{\bf P}_{\theta}{\bf C}_{m_0,n}{\bm Q}^\top_\theta{\bm\theta}_{m_0}$ and $t_n\sum_{k=m_0}^nr_k{\bf P}_{\theta}{\bf C}_{k+1,n}{\bm Q}^\top_\theta {\bf R}_{\theta,k+1}$ both converge to 0 almost surely, so that the asymptotic distribution of $t_n{\bm\theta}_n$ is determined by the martingale term $t_n\sum_{k=m_0}^nr_k{\bf P}_{\theta}{\bf C}_{k+1,n}{\bm Q}^\top_\theta {\bf V}\Delta{\bf M}_{\theta,k+1}$.

		We first verify that $|t_n{\bf P}_{\theta}{\bf C}_{m_0,n}{\bm Q}^\top_\theta{\bm\theta}_{m_0}|{\to}0$. To this end, we begin by bounding the magnitude of ${\bf C}_{k+1,n}$. From Lemma \ref{le3}, we obtain
		\begin{align*}
			|{\bf C}_{k+1,n}|=&O\Big((\log n-\log k)^{\rho-1} \max\limits_{u\in\{1,\cdots,T\}}|F_{k+1,n}(\alpha_u)|\Big)\\
            &\qquad+O((\log n-\log k)^\rho F_{k+1,n}(c^{-1}))+O(n^{-1})\\
			=&O\big((\log n)^{\rho-1}(k/n)^{c(1-\tau)}\big)+O\big((\log n)^\rho k/n\big),
		\end{align*}
		Therefore, we have
		\begin{align*}
			|t_n{\bf P}_{\theta}{\bf C}_{m_0,n}{\bm Q}^\top_\theta{\bm\theta}_{m_0}|=&O(t_n|{\bf C}_{m_0,n}|)\\
			=&O\big(t_n(\log n)^{\rho-1}n^{-c(1-\tau)}\big)+O(t_n(\log n)^\rho n^{-1})\\
			=&\begin{cases}
				O\big(\sqrt{n}(\log n)^{\rho-1}n^{-c(1-\tau)}\big)\ \ &\text{for }\tau<1-(2c)^{-1},\\
				O\Big(\frac{\sqrt{n}}{n^{\rho-1/2}}(\log n)^{\rho-1}n^{-1/2}\Big)\ \ &\text{for }\tau=1-(2c)^{-1}.
			\end{cases}
		\end{align*}
		When $\tau=1-(2c)^{-1}$, the expression converges to $0$. On the other hand, when $\tau<1-(2c)^{-1}$, the condition $c(1-\tau)>1/2$ guarantees that the entire expression again converges to $0$.

		We next show that $|t_n\sum_{k=m_0}^nr_k{\bf P}_{\theta}{\bf C}_{k+1,n}{\bm Q}^\top_\theta {\bf R}_{\theta,k+1}|{\to}0$. From Assumption \ref{as2} and \eqref{eq74}, we have $|{\bf R}_{\theta,k}|=|{\bf R}_k|=O(k^{-1})$, and hence
		\begin{align*}
			&\bigg|t_n\sum_{k=m_0}^nr_k{\bf P}_{\theta}{\bf C}_{k+1,n}{\bm Q}^\top_\theta {\bf R}_{\theta,k+1}\bigg|\\
			=&O\Big(t_n\sum\limits_{k=m_0}^{n-1}r_kk^{-1}|{\bf C}_{k+1,n}|\Big)+O(t_nr_nn^{-1})\\
			=&O\Big(t_nn^{-c(1-\tau)}(\log n)^{\rho-1}\sum\limits_{k=m_0}^{n-1}r_kk^{-1}k^{c(1-\tau)}\Big)+O\Big(t_nn^{-1}(\log n)^\rho\sum\limits_{k=m_0}^{n-1}r_k\Big)\\
			=&O\Big(t_nn^{-c(1-\tau)}(\log n)^{\rho-1}\sum\limits_{k=m_0}^{n-1}k^{-[2-c(1-\tau)]}\Big)\\
			=&\begin{cases}
				n^{1/2-c(1-\tau)}(\log n)^\rho\ \ &\text{for }\tau=1-c^{-1},\\
				n^{1/2-c(1-\tau)}(\log n)^{\rho-1}n^{c(1-\tau)-1}\ \ &\text{for }\tau<1-(2c)^{-1}\ \text{and }\tau\neq 1-c^{-1},\\
				\frac{\sqrt{n}}{(\log n)^{\rho-1/2}}n^{-1/2}(\log n)^{\rho-1}\ \ &\text{for } \tau=1-(2c)^{-1}.
			\end{cases}
		\end{align*}
		In all three cases, the term converges to zero, thus $|t_n\sum_{k=m_0}^nr_k{\bf P}_{\theta}{\bf C}_{k+1,n}{\bm Q}^\top_\theta {\bf R}_{\theta,k+1}|\to0$.

		Now, we establish the convergence of $t_n\sum_{k=m_0}^nr_k{\bf P}_{\theta}{\bf C}_{k+1,n}{\bm Q}^\top_\theta {\bf V}\Delta{\bf M}_{\theta,k+1}$ by Theorem \ref{th6}. For this purpose, we set ${\cal G}_{n,k}={\cal F}_{k+1}$. We first verify condition (c3). There exists a constant $K_1$ such that
		\begin{equation*}
			|t_nr_k{\bf P}_{\theta}{\bf C}_{k+1,n}{\bm Q}^\top_\theta {\bf V}\Delta{\bf M}_{\theta,k+1}|\le K_1t_nr_k|{\bf C}_{k+1,n}|.
		\end{equation*}	
		Then, by Lemma \ref{le12}, we have that for all $u>1$,
		\begin{align*}
			&\Big(\sup\limits_{m_0\le k\le n}\big|t_nr_k{\bf P}_{\theta}{\bf C}_{k+1,n}{\bm Q}^\top_\theta {\bf V}\Delta{\bf M}_{\theta,k+1}\big|\Big)^{2u}\\
			\le& t^{2u}_n\sum\limits_{k=m_0}^{n-1}K^{2u}_1|r_k{\bf C}_{k+1,n}|^{2u}+K_1t_nr_n\\
			=&\begin{cases}
				O(n^{-(u-1)})\ \ &\text{for }\tau<1-(2c)^{-1}\ \text{and }2u c(1-\tau)>2u-1,\\
				O((\log n)^{-u})\ \ &\text{for }\tau=1-(2c)^{-1}.
			\end{cases}
		\end{align*}
		Therefore, both terms on the right-hand side converge to $0$ for any $u>1$. For the first term, we require the condition $2u c(1-\tau)>2u-1$ to hold. Under the assumption $\tau<1-(2c)^{-1}$, this condition is satisfied for all $u$ when $c(1-\tau)\ge 1$. For the case where $1/2<c(1-\tau)<1$, the condition can still be fulfilled by choosing $u$ within the interval  $\Big(1,\frac{1}{2-2c(1-\tau)}\Big)$. Thus, there exists $u>1$ such that
		\begin{equation*}
			\Big(\sup\limits_{m_0\le k\le n}\big|t_nr_k{\bf P}_{\theta}{\bf C}_{k+1,n}{\bm Q}^\top_\theta {\bf V}\Delta{\bf M}_{\theta,k+1}\big|\Big)^{2u}\to0,
		\end{equation*}
		which verifies condition (c3).

		Now, we verify condition (c2). 
		Note that
		\begin{align*}
			&t^2_n\sum_{k=m_0}^nr^2_k{\bf P}_{\theta}{\bf C}_{k+1,n}{\bm Q}^\top_\theta {\bf V}\Delta{\bf M}_{\theta,k+1}\Delta{\bf M}^\top_{\theta,k+1}{\bf V}^\top{\bm Q}_\theta{\bf C}^\top_{k+1,n}{\bf P}^\top_{\theta}\\
			=&{\bf P}_{\theta}\bigg(t^2_n\sum_{k=m_0}^nr^2_k{\bf C}_{k+1,n}{\bm Q}^\top_\theta {\bf V}\Delta{\bf M}_{\theta,k+1}\Delta{\bf M}^\top_{\theta,k+1}{\bf V}^\top{\bm Q}_\theta{\bf C}^\top_{k+1,n}\bigg){\bf P}^\top_{\theta}\\
			=&{\bf P}_{\theta}\bigg(t^2_n\sum_{k=m_0}^nr^2_k{\bf C}_{k+1,n}{\bm Q}^\top_\theta {\bf P}_\theta{\bf S}_V{\bf Q}^\top_\theta\Delta{\bf M}_{\theta,k+1}\Delta{\bf M}^\top_{\theta,k+1}{\bf Q}_\theta{\bf S}^\top_V{\bf P}^\top_\theta {\bm Q}_\theta{\bf C}^\top_{k+1,n}\bigg){\bf P}^\top_{\theta}\\
			=&{\bf P}_{\theta}\bigg(t^2_n\sum_{k=m_0}^nr^2_k{\bf C}_{k+1,n}{\bf S}_V{\bf Q}^\top_\theta\Delta{\bf M}_{\theta,k+1}\Delta{\bf M}^\top_{\theta,k+1}{\bf Q}_\theta{\bf S}^\top_V{\bf C}^\top_{k+1,n}\bigg){\bf P}^\top_{\theta}\\
			=&{\bf P}_{\theta}\bigg(t^2_n\sum_{k=m_0}^{n-1}r^2_k{\bf C}_{k+1,n}{\bf S}_V{\bf Q}^\top_\theta\Delta{\bf M}_{\theta,k+1}\Delta{\bf M}^\top_{\theta,k+1}{\bf Q}_\theta{\bf S}^\top_V{\bf C}^\top_{k+1,n}\bigg){\bf P}^\top_{\theta}+\\
			&\quad{\bf P}_{\theta}\bigg(t^2_nr^2_n{\bf S}_V{\bf Q}^\top_\theta\Delta{\bf M}_{\theta,n+1}\Delta{\bf M}^\top_{\theta,n+1}{\bf Q}_\theta{\bf S}^\top_V\bigg){\bf P}^\top_{\theta},
		\end{align*}
		where the last term equals to
		\begin{equation*}
			O(t^2_nr^2_n)=\begin{cases}
				O(n^{-1})\to0\ \ &\text{for }\ \tau<1-(2c)^{-1}, \\
				O(n^{-1}(\log n)^{2\rho-1})\to0\ \ &\text{for }\ \tau=1-(2c)^{-1}.
			\end{cases}
		\end{equation*}
		Therefore, it remains to prove the convergence of
		\begin{equation}\label{eq73}
			t^2_n\sum_{k=m_0}^{n-1}r^2_k{\bf C}_{k+1,n}{\bf S}_V{\bf Q}^\top_\theta\Delta{\bf M}_{\theta,k+1}\Delta{\bf M}^\top_{\theta,k+1}{\bf Q}_\theta{\bf S}^\top_V{\bf C}^\top_{k+1,n}.
		\end{equation}
		To this end, we define
		\begin{align*}
			{\bf H}_{\theta,k+1}=&{\bf Q}^\top_\theta\Delta{\bf M}_{\theta,k+1}\Delta{\bf M}^\top_{\theta,k+1}{\bf Q}_\theta\\
			=&\begin{pmatrix}
				{\bf Q}^\top\Delta{\bf M}_{k+1}\Delta{\bf M}^\top_{k+1}{\bf Q} & \;\;{\bf Q}^\top\Delta{\bf M}_{k+1}\Delta{\bf M}^\top_{k+1}{\bf q}_1  & \;\;{\bf Q}^\top\Delta{\bf M}_{k+1}\Delta{\bf M}^\top_{k+1}{\bf Q}\\
				{\bf q}^\top_1\Delta{\bf M}_{k+1}\Delta{\bf M}^\top_{k+1}{\bf Q} & \;\;{\bf q}^\top_1\Delta{\bf M}_{k+1}\Delta{\bf M}^\top_{k+1}{\bf q}_1 & \;\;{\bf q}^\top_1\Delta{\bf M}_{k+1}\Delta{\bf M}^\top_{k+1}{\bf Q}\\
				{\bf Q}^\top\Delta{\bf M}_{k+1}\Delta{\bf M}^\top_{k+1}{\bf Q} & \;\;{\bf Q}^\top\Delta{\bf M}_{k+1}\Delta{\bf M}^\top_{k+1}{\bf q}_1 & \;\;{\bf Q}^\top\Delta{\bf M}_{k+1}\Delta{\bf M}^\top_{k+1}{\bf Q}
			\end{pmatrix}.
		\end{align*}
		The term ${\bf Q}^\top\Delta{\bf M}_{k+1}\Delta{\bf M}^\top_{k+1}{\bf Q}$ was denoted by ${\bf H}_{k+1}$ in \eqref{eq72}, and for all $1\le i,j\le N-1$,
		\begin{equation}\label{eq83}
			\mathbb{E}([{\bf H}_{k+1}]_{i,j}|{\cal F}_k)\overset{a.s.}{\to}Z_\infty(1-Z_\infty){\bf q}^\top_{i+1}{\bf q}_{j+1}.
		\end{equation}
		Define ${\bf h}_{k+1}={\bf Q}^\top\Delta{\bf M}_{k+1}\Delta{\bf M}^\top_{k+1}{\bf q}_1$ and $h_{k+1}={\bf q}^\top_1\Delta{\bf M}_{k+1}\Delta{\bf M}^\top_{k+1}{\bf q}_1$, then for all $1\le i\le N-1$,
		\begin{equation}\label{eq84}
			\mathbb{E}([{\bf h}_{k+1}]_{i,1}|{\cal F}_k)\overset{a.s.}{\to}Z_\infty(1-Z_\infty){\bf q}^\top_{i+1}{\bf q}_1,\ \ \mathbb{E}(h_{k+1}|{\cal F}_k)\overset{a.s.}{\to}Z_\infty(1-Z_\infty){\bf q}^\top_1{\bf q}_1.
		\end{equation}
		To simplify notation, we define ${\bf C}^1_{k+1,n}={\bf C}^{11}_{k+1,n}$, ${\bf C}^3_{k+1,n}=({\bf C}^{31}_{k+1,n}+c^{-1}{\bf C}^{33}_{k+1,n})$, and $c^2_{k+1,n}=(c^{-1}-1)c^{22}_{k+1,n}$. Then, the term \eqref{eq73} can be rewritten as
		\begin{equation}\label{eq75}
			{\bf W}_n
			=t^2_n\sum\limits_{k=m_0}^{n-1}\begin{pmatrix}
				{\bf C}^1_{k+1,n}{\bf H}_{k+1}({\bf C}^1_{k+1,n})^\top & c^2_{k+1,n}{\bf C}^1_{k+1,n}{\bf h}_{k+1} & {\bf C}^1_{k+1,n}{\bf H}_{k+1}({\bf C}^3_{k+1,n})^\top\\
				c^2_{k+1,n}{\bf h}^\top_{k+1}({\bf C}^1_{k+1,n})^\top & (c^2_{k+1,n})^2h_{k+1} & c^2_{k+1,n}{\bf h}_{k+1}({\bf C}^3_{k+1,n})^\top\\
				{\bf C}^3_{k+1,n}{\bf H}_{k+1}({\bf C}^1_{k+1,n})^\top & c^2_{k+1,n}({\bf C}^3_{k+1,n})^\top{\bf h}_{k+1} & {\bf C}^3_{k+1,n}{\bf H}_{k+1}({\bf C}^3_{k+1,n})^\top
			\end{pmatrix},
		\end{equation}
		For all $1\le u\le T$, ${\cal I}_{u-1}\le i\le {\cal I}_u$, and for $0\le t\le i-1$, $1\le s\le i-1$, we have the scalar $c^2_{k+1,n}$ and the elements of $(N-1)\times(N-1)$ matrices ${\bf C}^1_{k+1,n}$ and ${\bf C}^3_{k+1,n}$ are equal to
		\begin{align}
			&[{\bf C}^{1}_{k+1,n}]_{i,i-t}\sim c^t(\log n-\log k)^tF_{k+1,n}(\alpha_u),  \label{eq85}\\
			&c^{2}_{k+1,n}=(c^{-1}-1)F_{k+1,n}(c^{-1}),\label{eq86}\\
			&[{\bf C}^{3}_{k+1,n}]_{i,i}\label{eqww1}\\
            &\qquad=\begin{cases}
				\frac{1}{c\alpha_u-1}[(1-c^{-1})F_{k+1,n}(c^{-1})-(1-\alpha_u)F_{k+1,n}(\alpha_u)]\ \ &\text{for  }c\alpha_j\neq 1,  \\
				[(1-c^{-1})(\log n-\log k)+c^{-1}]F_{k+1,n}(c^{-1})+O(n^{-1})\ \ &\text{for  }c\alpha_j=1,
			\end{cases}  \non\\
			&[{\bf C}^{3}_{k+1,n}]_{i,i-s}\sim
			\left[c^{s-1}(\log n-\log k)^{s-1}+(1-\alpha_u)c^s(\log n-\log k)^s\right]\cdot\label{eqww2}\\
			&\qquad\qquad\qquad	\begin{cases}\frac{1}{c\alpha_u-1}[F_{k+1,n}(c^{-1})-F_{k+1,n}(\alpha_u)]\ \ &\text{for  }c\alpha_j\neq 1,\\
				\frac{1-c^{-1}}{1-\alpha_u}F_{k+1,n}(c^{-1})(\log n-\log k)+O(n^{-1})\ \ &\text{for  }c\alpha_j=1.
			\end{cases}\non
		\end{align}

		The convergence of each term in \eqref{eq75} can be established by combining the following results:
        \begin{align}\label{eq76}
            & t^2_n\sum\limits_{k=m_0}^{n-1}r^2_k(\log n-\log k)^q F_{k+1,n}(x)F_{k+1,n}(y)  \\
            & \qquad \qquad  \overset{a.s.}{\to} \begin{cases}
				\frac{c^2q!}{[-1+c(x+y)]^{q+1}}\ \ &\text{for } c({\rm Re}(x)+{\rm Re}(y))>1,\\
				\frac{c^2}{q+1}\ \ &\text{for } c(x+y)=1,\ \text{and}\\
				& \quad m(1-x)=m(1-y)=\rho,\\ 
				0,\ \ &\text{for } c({\rm Re}(x)+{\rm Re}(y))=1,\\
				& \quad \text{and }\ c(x+y)\neq1,\\
				& \quad \text{or } m(1-x)m(1-y)<\rho^2.
			\end{cases}\non
        \end{align}
		Here, $x,y\in\{c,\alpha_j,\ 1\le j\le S\}$, $q$ is a non-negative integer, and $m(1-x)$ denotes the geometric multiplicity of the eigenvalue $1-x$. Moreover, let $\eta\in\{[{\bf H}_{k+1}]{i,j}, {\bf h}_{i,1}, h_{k+1},\ 1\le i,j\le N-1\}$. Note that we omit the $O(n^{-1})$ terms associated with ${\bf C}^3_{k+1,n}$, since for all integers $q\ge0$, we have
        \begin{align*}
        & t^2_n(\log n)^q\sum\limits_{k=m_0}^{n-1}r^2_k|\eta_{k+1}|O(n^{-2})=\begin{cases}
        O(n^{-2}(\log n)^q)\to0\ \ &\text{for } \tau<1-(2c)^{-1},\\
        O(n^{-2}(\log n)^{1-2\rho}(\log n)^q)\to0\ \ &\text{for } \tau=1-(2c)^{-1}.
        \end{cases} \\
        & t^2_n(\log n)^q\sum\limits_{k=m_0}^{n-1}r^2_k|\eta_{k+1}|O(n^{-1})F_{k+1,n}(c^{-1}) \\
        & \qquad\qquad\qquad\qquad\qquad\quad\qquad = \begin{cases}
        O(n^{-1}(\log n)^{q+1}) \to 0,           & \text{for } \tau<1-(2c)^{-1}, \\
        O(n^{-1}(\log n)^{2-2\rho+q}) \to 0,     & \text{for } \tau=1-(2c)^{-1}.
        \end{cases} \\
        & t^2_n(\log n)^q\sum\limits_{k=m_0}^{n-1}r^2_k|\eta_{k+1}|O(n^{-1})F_{k+1,n}(\alpha_u) \\
        & \qquad\qquad\qquad\qquad\qquad\qquad\quad = \begin{cases}
        O(n^{-c{\rm Re}(\alpha_u)}\log n) \to 0, & \text{for } {\rm Re} (\alpha_u)=1-c^{-1}, \\
        O(n^{-1}) \to 0,                         & \text{for } \tau<1-(2c)^{-1} \text{ and } \\
                                                & \quad {\rm Re}(\alpha_u)\neq 1-c^{-1}, \\
        O(n^{-1}(\log n)^{1-2\rho}) \to 0,       & \text{for } {\rm Re}(\alpha_u)=1-(2c)^{-1}.
        \end{cases}
        \end{align*}

		We now establish the convergence of \eqref{eq76} by applying Lemma \ref{le10}. The equation \eqref{eq76} can be written as $\sum_{k=m_0}^{n-1}\frac{v_{n,k+1}Y_k}{c_k}$, where $Y_{k+1}=\eta_{k+1}$,
		\begin{equation*}
			c_k=\begin{cases}
				1/{kr^2_k}\ \ &\text{for } c[2-{\rm Re}(x)-{\rm Re}(y)]>1,\\
				\log k/(kr^2_k)\ \ &\text{for } c(x+y)=1.
			\end{cases}
		\end{equation*}
		and
		\begin{align}\label{eq45}
			v_{n,k}=\begin{cases}
				\frac{n}{k}(\log n-\log k)^qF_{k+1,n}(x)F_{k+1,n}(y)\ \ &\text{for }
				c({\rm Re}(x)+{\rm Re}(y))>1,\\
				\frac{n\log k}{k(\log n)^{2\rho-1}}(\log n-\log k)^qF_{k+1,n}(x)F_{k+1,n}(y)\  &\text{for }c(x+y)=1.
			\end{cases}
		\end{align}
		The conditional convergence of $Y_k$ is given by \eqref{eq83} and \eqref{eq84}. Moreover,
		\begin{equation*}
			\sum\limits_{n}\frac{\mathbb{E}[|Y_n|^2]}{c^2_n}=\begin{cases}
				O\Big(\sum\limits_{n}n^2r^4_n\Big)<\infty\ \ &\text{for }c({\rm Re}(x)+{\rm Re}(y))>1,\\
				O\Big(\sum\limits_{n}n^2r^4_n/(\log n)^2\Big)<\infty\ \ &\text{for }c(x+y)=1.
			\end{cases}
		\end{equation*}
		By Lemma \ref{le8}, we obtain
		\begin{align*}
			\lim\limits_{n}\sum\limits_{k=m_0}^{n-1}\frac{v_{n,k}}{c_k}=\begin{cases}
				\frac{c^{t+s+2}}{[-1+c(2-\lambda_u-\lambda_v)]^{t+s+1}}\ \ &\text{for } c({\rm Re}(x)+{\rm Re}(y))>1,\\
				\frac{c^{2\rho}}{2\rho-1}\ &\text{for } c(x+y)=1,\ \text{and}\\
				&\ \ m(1-x)=m(1-y)=\rho.\\
				0,\ &\text{for } c({\rm Re}(x)+{\rm Re}(y))=1,\\
				&\ \ \text{and } c(x+y)\neq1,\\
				&\ \  \text{or } m(1-x)m(1-y)<\rho^2.
			\end{cases}
		\end{align*}
		And, by choosing $u=1$ in Lemmas \ref{le12} and \ref{le14}, we immediately obtain the validity of
		\begin{equation*}
			\lim\limits_{n}v_{n,k}=0,\ \   \sum\limits_{k=1}^n\frac{|v_{n,k}|}{c_k}=O(1),\ \ \sum\limits_{k=1}^{n}|v_{n,k}-v_{n,k-1}|=O(1).
		\end{equation*}
		Thus, by Lemma \ref{le10}, the convergence of \eqref{eq76} is established. Hence, condition (c2) is satisfied.

		We have now established \eqref{eq76}. Based on this expression and the combinations of $x$, $y$ and $q$, we can derive the convergence of each term in \eqref{eq75}. The results are presented below, with detailed derivations given in Supplementary Materials.
		For $1\le u,v\le T$, ${\cal I}_{u-1}\le i\le {\cal I}_u$, ${\cal I}_{v-1}\le j\le {\cal I}_v$, we have
		\begin{align*}
			&t^2_n\sum_{k=m_0}^{n-1}r^2_k[{\bf C}^1_{k+1,n}{\bf H}_{k+1}({\bf C}^1_{k+1,n})^\top]_{i,j}\overset{a.s.}{\to}\begin{cases}
				Z_\infty(1-Z_\infty)[\widehat{\bf S}_{{\bf ZZ}}]_{i,j}\ \ \text{for  }\tau<1-(2c)^{-1},\\
				Z_\infty(1-Z_\infty)[\widehat{\bf S}^\ast_{{\bf ZZ}}]_{i,j}\ \ \text{for  }\tau=1-(2c)^{-1}.\\
			\end{cases}\\
			&t^2_n\sum_{k=m_0}^{n-1}r^2_k[c^2_{k+1,n}{\bf C}^1_{k+1,n}{\bf h}_{k+1}   ]_i\overset{a.s.}{\to}\begin{cases}
				Z_\infty(1-Z_\infty)[\widehat{\bf S}_{{\bf ZN}}]_{i,1}\ \ \text{for  }\tau<1-(2c)^{-1},\\
				Z_\infty(1-Z_\infty)[\widehat{\bf S}^\ast_{{\bf ZN}}]_{i,1}\ \ \text{for  }\tau=1-(2c)^{-1}.\\
			\end{cases}\\
			&t^2_n\sum_{k=m_0}^{n-1}r^2_k[{\bf C}^1_{k+1,n}{\bf H}_{k+1}({\bf C}^3_{k+1,n})^\top]_{i,j}\overset{a.s.}{\to}\begin{cases}
				Z_\infty(1-Z_\infty)[\widehat{\bf S}_{{\bf ZN}}]_{i,j+1}\ \ \text{for  }\tau<1-(2c)^{-1},\\
				Z_\infty(1-Z_\infty)[\widehat{\bf S}^\ast_{{\bf ZN}}]_{i,j+1}\ \ \text{for  }\tau=1-(2c)^{-1}.\\
			\end{cases}\\
			&t^2_n\sum_{k=m_0}^{n-1}r^2_k(c^2_{k+1,n})^2h_{k+1}\overset{a.s.}{\to}\begin{cases}
				Z_\infty(1-Z_\infty)[\widehat{\bf S}_{{\bf NN}}]_{1,1}\ \ \text{for  }\tau<1-(2c)^{-1},\\
				Z_\infty(1-Z_\infty)[\widehat{\bf S}^\ast_{{\bf NN}}]_{1,1}\ \ \text{for  }\tau=1-(2c)^{-1}.\\
			\end{cases}\\
			&t^2_n\sum_{k=m_0}^{n-1}r^2_k[c^2_{k+1,n}{\bf h}_{k+1}({\bf C}^3_{k+1,n})^\top]_j\overset{a.s.}{\to}\begin{cases}
				Z_\infty(1-Z_\infty)[\widehat{\bf S}_{{\bf NN}}]_{1,j+1}\ \ \text{for  }\tau<1-(2c)^{-1},\\
				Z_\infty(1-Z_\infty)[\widehat{\bf S}^\ast_{{\bf NN}}]_{1,j+1}\ \ \text{for  }\tau=1-(2c)^{-1}.\\
			\end{cases}\\
			&t^2_n\sum_{k=m_0}^{n-1}r^2_k[{\bf C}^3_{k+1,n}{\bf H}_{k+1}({\bf C}^3_{k+1,n})^\top]_{i,j}\overset{a.s.}{\to}\begin{cases}
				Z_\infty(1-Z_\infty)[\widehat{\bf S}_{{\bf NN}}]_{i+1,j+1}\ \ \text{for  }\tau<1-(2c)^{-1},\\
				Z_\infty(1-Z_\infty)[\widehat{\bf S}^\ast_{{\bf NN}}]_{i+1,j+1}\ \ \text{for  }\tau=1-(2c)^{-1}.\\
			\end{cases}
		\end{align*}
		Recall the definition of ${\bf P}_\theta$, we then obtain
		\begin{align*}
			&{\bf P}_\theta{\bf W}_n{\bf P}^\top_\theta\\
            &\qquad\overset{a.s.}{\to}\begin{cases}
				Z_\infty(1-Z_\infty)\begin{pmatrix}
					{\bf P} & \;\;{\bf 0}\\
					{\bf 0} & \;\;\widetilde{\bf P}
				\end{pmatrix}\begin{pmatrix}
					\widehat{\bf S}_{{\bf ZZ}} & \;\;\widehat{\bf S}_{{\bf ZN}}\\
					\widehat{\bf S}^\top_{{\bf ZN}} & \;\;\widehat{\bf S}_{{\bf NN}}
				\end{pmatrix}\begin{pmatrix}
					{\bf P}^\top & {\bf 0}\\
					{\bf 0} & \widetilde{\bf P}^\top
				\end{pmatrix}=Z_\infty(1-Z_\infty)\begin{pmatrix}
					\widehat{\bm\Sigma}_{{\bf ZZ}} & \;\;\widehat{\bm\Sigma}_{{\bf ZN}}\\
					\widehat{\bm\Sigma}^\top_{{\bf ZN}} & \;\;\widehat{\bm\Sigma}_{{\bf NN}}
				\end{pmatrix},\\
				\qquad\qquad\qquad\qquad\qquad\qquad\qquad\qquad\qquad\qquad\qquad\qquad\quad \text{for  } \tau<1-(2c)^{-1},\\
				Z_\infty(1-Z_\infty)\begin{pmatrix}
					{\bf P} &\;\; {\bf 0}\\
					{\bf 0} & \;\;\widetilde{\bf P}
				\end{pmatrix}\begin{pmatrix}
					\widehat{\bf S}^\ast_{{\bf ZZ}} & \;\;\widehat{\bf S}^\ast_{{\bf ZN}}\\
					\widehat{\bf S}^\ast_{{\bf ZN}} & \;\;\widehat{\bf S}^\ast_{{\bf NN}}
				\end{pmatrix}\begin{pmatrix}
					{\bf P}^\top & \;\;{\bf 0}\\
					{\bf 0} & \;\;\widetilde{\bf P}^\top
				\end{pmatrix}=Z_\infty(1-Z_\infty)\begin{pmatrix}
					\widehat{\bm\Sigma}^\ast_{{\bf ZZ}} & \;\;\widehat{\bm\Sigma}^\ast_{{\bf ZN}}\\
					\widehat{\bm\Sigma}^\ast_{{\bf ZN}} & \;\;\widehat{\bm\Sigma}^\ast_{{\bf NN}}
				\end{pmatrix}\\
				\qquad\qquad\qquad\qquad\qquad\qquad\qquad\qquad\qquad\qquad\qquad\qquad\quad\text{for  } \tau=1-(2c)^{-1}.
			\end{cases}
		\end{align*}
		Theorem \ref{th2} is proved.
	\end{proof}

	We now establish Theorems \ref{th5}, \ref{th13} and \ref{th4} using Theorems \ref{th8} and \ref{th2}.
	\begin{proof}[Proof of Theorem \ref{th5}, \ref{th13} and \ref{th4}]
		By Theorem \ref{th8}, we have
		\begin{equation*}
			\sqrt{n}(\widetilde{Z}_n-Z_\infty)\to{\cal N}(0,Z_\infty(1-Z_\infty)\widetilde{\sigma}^2_\gamma))\ \ \ \ \ \ \text{\textit{stably}}.
		\end{equation*}
		By case (a) of Theorem \ref{th2}, it follows that
		\begin{equation*}
			\sqrt{n}\begin{pmatrix}
				\widehat{{\bf Z}}_n\\ \widehat{{\bf N}}_n
			\end{pmatrix}\to{\cal N}\left({\bf 0},Z_\infty(1-Z_\infty)\begin{pmatrix}
				\widehat{\bm\Sigma}_{\bf ZZ} & \;\;\widehat{\bm\Sigma}_{\bf ZN}\\
				\widehat{\bm\Sigma}^\top_{\bf ZN} & \;\;\widehat{\bm\Sigma}_{\bf NN}
			\end{pmatrix}\right)\ \ \ \ \ \ \text{\textit{stably}}.
		\end{equation*}
		Applying Lemma \ref{le11}, we obtain
		\begin{align*}
			\sqrt{n}\begin{pmatrix}
				{\bf Z}_n-Z_\infty{\bf 1}\\ {\bf N}_n-Z_\infty{\bf 1}
			\end{pmatrix}=&\sqrt{n}\begin{pmatrix}
				(\widetilde{Z}_n-Z_\infty){\bf 1}+\widehat{\bf Z}_n\\ (\widetilde{Z}_n-Z_\infty){\bf 1}+\widehat{\bf N}_n
			\end{pmatrix}\\
			\to&{\cal N}\left({\bf 0},Z_\infty(1-Z_\infty)\begin{pmatrix}
				\widetilde{\bm\Sigma}_1+\widehat{\bm\Sigma}_{\bf ZZ} & \;\;\widetilde{\bm\Sigma}_1+\widehat{\bm\Sigma}_{\bf ZN}\\
				\widetilde{\bm\Sigma}_1+\widehat{\bm\Sigma}^\top_{\bf ZN} & \;\;\widetilde{\bm\Sigma}_1+\widehat{\bm\Sigma}_{\bf NN}
			\end{pmatrix}\right)\ \ \ \ \ \ \text{\textit{stably}}.
		\end{align*}
		This establishes Theorem \ref{th5}.
		
		Next, we prove Theorem \ref{th13}. By case (b) of Theorem \ref{th2}, we have
		\begin{equation*}
			\frac{\sqrt{n}}{(\log n)^{\rho-1/2}}\begin{pmatrix}
				\widehat{{\bf Z}}_n\\ \widehat{{\bf N}}_n
			\end{pmatrix}\to{\cal N}\left({\bf 0},Z_\infty(1-Z_\infty)\begin{pmatrix}
				\widehat{\bm\Sigma}^\ast_{\bf ZZ} & \;\;\widehat{\bm\Sigma}^\ast_{\bf ZN}\\
				\widehat{\bm\Sigma}^{\ast\top}_{\bf ZN} & \;\;\widehat{\bm\Sigma}^\ast_{\bf NN}
			\end{pmatrix}\right)\ \ \ \ \ \ \text{\textit{stably}},
		\end{equation*}
		Then it follows that
		\begin{align*}
			\frac{\sqrt{n}}{(\log n)^{\rho-1/2}}\begin{pmatrix}
				{\bf Z}_n-Z_\infty{\bf 1}\\ {\bf N}_n-Z_\infty{\bf 1}
			\end{pmatrix}=&\frac{\sqrt{n}}{(\log n)^{\rho-1/2}}\begin{pmatrix}
				(\widetilde{Z}_n-Z_\infty){\bf 1}+\widehat{\bf Z}_n\\ (\widetilde{Z}_n-Z_\infty){\bf 1}+\widehat{\bf N}_n
			\end{pmatrix}\\
			=&\frac{\sqrt{n}}{(\log n)^{\rho-1/2}}\begin{pmatrix}
				(\widetilde{Z}_n-Z_\infty){\bf 1}\\
				(\widetilde{Z}_n-Z_\infty){\bf 1}
			\end{pmatrix}+\frac{\sqrt{n}}{(\log n)^{\rho-1/2}}\begin{pmatrix}
				\widehat{{\bf Z}}_n\\ \widehat{{\bf N}}_n
			\end{pmatrix}  \\
			\to&{\cal N}\left({\bf 0},Z_\infty(1-Z_\infty)\begin{pmatrix}
				\widehat{\bm\Sigma}^\ast_{\bf ZZ} & \;\;\widehat{\bm\Sigma}^\ast_{\bf ZN}\\
				\widehat{\bm\Sigma}^{\ast\top}_{\bf ZN} & \;\;\widehat{\bm\Sigma}^\ast_{\bf NN}
			\end{pmatrix}\right)\ \ \ \ \ \ \text{\textit{stably}},
		\end{align*}
		where the first term vanishes in probability, and the convergence is thus determined by the second term.
		
		Finally, Theorem \ref{th4} follows from case (a) of Theorem \ref{th9}, Theorem \ref{th8}, and Theorem \ref{th2}, corresponding respectively to cases (a), (b), and (c), together with the linear invariance property of the multivariate normal distribution.
	\end{proof}

	\section{Statistical Inference}\label{sec5}
	
	A key challenge in the study of social networks is to understand and predict viral information diffusion on platforms like Twitter. Information often propagates through multi-level networks, where an agent's decision to share content is a reinforced process. This reinforcement is driven by both self-reinforcement from repeated personal exposure and social influence from the actions of other agents within their social network. From a statistical perspective, this complex dynamic creates a clear need to formally test a hypothesized influence structure against observational data and to estimate the strength of reinforcement effects with a principled measure of uncertainty.
	
	The asymptotic theory developed in the previous section provides a rigorous foundation to address these questions. In this section, we develop two applications of our CLTs as practical inferential tools. First, we construct hypothesis tests capable of statistically evaluating specific network models. Second, we provide methods for constructing confidence regions for the system's key parameters, thereby quantifying the uncertainty.

	\subsection{Hypothesis Testing for Network Structure}
	In various applied settings, agent interactions often follow a hierarchical or directional pattern. For example, in organizational command chains, decisions are passed from superiors to subordinates, who partially adopt their instructions. In information diffusion, public opinion may spread outward from a central authority. These patterns motivate the use of networks with unidirectional influence as a natural structure under the null hypothesis when applying CLTs for inference. We consider the hypothesis testing problem 
	\begin{equation}\label{eq53}
		\mathbf{H}_0:\mathbf{W}=\mathbf{W}_0\ \ {\rm vs.}\ \ \mathbf{H}_1:\mathbf{W}\neq \mathbf{W}_0,	
	\end{equation}
	which aims to verify whether the network adjacency matrix conforms to the specified structure $\mathbf{W}_0$, thereby revealing the pattern of influence within and between subgroups.

	To facilitate statistical inference, we establish the joint asymptotic distribution of $({\bf Z}_n, {\bf N}_n)_n$ in Theorems \ref{th3}, \ref{th5}, and \ref{th13}. For the regime $1/2<\gamma<1$, the asymptotic covariance matrices of both components ${\bf Z}_n$ and ${\bf N}_n$ primarily depend on the norm of ${\bf q}_1$, which provides limited insight into the structural properties of the adjacency matrix ${\bf W}$. By contrast, the limiting covariance matrix of $\widehat{\bf Z}_n$ in case (a) of Theorem \ref{th9} can be expressed as a linear combination of the eigenvalues $\lambda_u(u\in\{1,2,\ldots,T\})$ and the associated left eigenvectors ${\bf p}_i$ and right eigenvectors ${\bf q}_i(i\in\{1,2,\ldots,N\})$ of ${\bf W}$, thereby providing a more informative characterization of the network structure. When $\gamma=1$, under both $\tau<1-(2c)^{-1}$ and $\tau=1-(2c)^{-1}$, the covariance structures of $\widehat{\bf Z}_n$ and $\widehat{\bf N}_n$ in Theorem \ref{th2} likewise share this spectral representation. Since $\widehat{\bf N}_n$ does not convey additional structural information beyond that of $\widehat{\bf Z}_n$, inference based solely on $\widehat{\bf Z}_n$ suffices for effective testing while reducing complexity. Accordingly, we focus on inference procedures grounded in the CLTs for $\widehat{\bf Z}_n$ established above, which form the basis for constructing the relevant test statistics.

	To construct the test statistics, we use the vector ${\bf q}_1$ and matrices ${\bf P}$, ${\bf Q}$ derived from the null hypothesis adjacency matrix ${\bf W}_0$. Recall that
	\begin{equation*}
		\widetilde{Z}_n=N^{-1/2}{\bf q}^\top_1{\bf Z}_n,\ \ \widehat{\mathbf{Z}}_n=\mathbf{P}\mathbf{Q}^\top \mathbf{Z}_n,
	\end{equation*}
	since the synchronization limit $Z_\infty$ is generally unobservable, we replace it by its estimator $\widetilde{Z}_n$ according to Theorem \ref{th10}. When ${\bf W}={\bf W}_0$, let the ranks of the matrices $\widehat{\bm{\Sigma}}_\gamma$, $\widehat{\bm{\Sigma}}_{\bf ZZ}$ and $\widehat{\bm{\Sigma}}_{\bf ZZ}^\ast$ in Theorem \ref{th9} be $R_1$, $R_2$ and $R_3$, respectively. Moreover, denote their Moore--Penrose generalized inverses as $\widehat{\bm{\Sigma}}_\gamma^\dagger$, $\widehat{\bm{\Sigma}}_{\bf ZZ}^\dagger$, and $(\widehat{\bm{\Sigma}}_{\bf ZZ}^\ast)^\dagger$, respectively. 
	
	Then, the test statistics and their asymptotic distributions  for the hypothesis testing problem \eqref{eq53} under the three scenarios are  given in the following corollary.
	\begin{corollary}
	Assume that Assumptions \ref{as1} and \ref{as2} hold. 
    For hypothesis test  in \eqref{eq53}, we define 
	\begin{itemize}
		\item[(a)] for $1/2<\gamma<1$,
		\begin{equation}\label{eq56}
			T_{\gamma,n}=n^{\gamma}[\widetilde{Z}_n(1-\widetilde{Z}_n)]^{-1}{\bf Z}^\top_n{\bf Q}{\bf P}^\top \widehat{\bm{\Sigma}}_\gamma^\dagger {\bf P}{\bf Q}^\top{\bf Z}_n.
		\end{equation}
		
		\item[(b)] for $\gamma=1$ and $\tau<1-(2c)^{-1}$,
		\begin{equation}\label{eq57}
			T_{1,n}=n[\widetilde{Z}_n(1-\widetilde{Z}_n)]^{-1}{\bf Z}^\top_n{\bf Q}{\bf P}^\top \widehat{\bm{\Sigma}}_{\bf ZZ}^\dagger {\bf P}{\bf Q}^\top{\bf Z}_n.
		\end{equation}
		
		\item[(c)] for $\gamma=1$, $\tau=1-(2c)^{-1}$,
		\begin{equation}\label{eq58}
			T_{1,n}^\ast=\frac{n}{(\log n)^{2\rho-1}}[\widetilde{Z}_n(1-\widetilde{Z}_n)]^{-1}{\bf Z}^\top_n{\bf Q}{\bf P}^\top (\widehat{\bm{\Sigma}}_{\bf ZZ}^\ast)^\dagger {\bf P}{\bf Q}^\top{\bf Z}_n.		
		\end{equation}
	\end{itemize}
	Thus, under the null hypothesis ${\bf H}_0:{\bf W}={\bf W}_0$, it follows that
	\begin{equation*}
		T_{\gamma,n}\overset{L}{\to}\chi^2_{R_1},\ \ T_{1,n}\overset{L}{\to}\chi^2_{R_2},\ \ T_{1,n}^\ast\overset{L}{\to}\chi^2_{R_3}.
	\end{equation*}
\end{corollary}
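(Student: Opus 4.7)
The plan is to derive the three chi-squared limits by combining the stably-convergent central limit theorems for $\widehat{\mathbf{Z}}_n = \mathbf{P}\mathbf{Q}^\top \mathbf{Z}_n$ in Theorems \ref{th9} and \ref{th2} with a Slutsky-type substitution of the unknown factor $Z_\infty(1-Z_\infty)$ by its consistent estimator $\widetilde{Z}_n(1-\widetilde{Z}_n)$. Under $\mathbf{H}_0:\mathbf{W}=\mathbf{W}_0$, the spectral quantities $\mathbf{P}$, $\mathbf{Q}$, $\mathbf{q}_1$ used to build the test statistics are exactly those associated with the true dynamics, so the CLTs apply verbatim with the same matrices $\widehat{\bm{\Sigma}}_\gamma$, $\widehat{\bm{\Sigma}}_{\mathbf{ZZ}}$, $\widehat{\bm{\Sigma}}^\ast_{\mathbf{ZZ}}$ whose pseudoinverses appear in $T_{\gamma,n}$, $T_{1,n}$, $T^\ast_{1,n}$.

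Focusing on case (a), Theorem \ref{th9}(a) gives $n^{\gamma/2}\widehat{\mathbf{Z}}_n \to \mathcal{N}(\mathbf{0}, Z_\infty(1-Z_\infty)\widehat{\bm{\Sigma}}_\gamma)$ stably. By the structure of stable convergence, this limit is mixed normal: jointly with $Z_\infty$, its conditional law given $Z_\infty$ is a centered Gaussian with covariance $Z_\infty(1-Z_\infty)\widehat{\bm{\Sigma}}_\gamma$. Theorem \ref{th10} yields $\widetilde{Z}_n \overset{a.s.}{\to} Z_\infty$, and on the event $\{0<Z_\infty<1\}$, guaranteed to carry full probability (up to the two possible atoms at $\{0,1\}$) under the non-degeneracy condition \eqref{eq60} thanks to Theorem \ref{th1}(b) ruling out interior point masses, we have the key Slutsky input
\begin{equation*}
\frac{\widetilde{Z}_n(1-\widetilde{Z}_n)}{Z_\infty(1-Z_\infty)} \overset{a.s.}{\to} 1.
\end{equation*}

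The next step is the classical algebraic identity: if $\mathbf{Y} \sim \mathcal{N}(\mathbf{0}, \Sigma)$ with $\Sigma$ of rank $R$, then $\mathbf{Y}^\top \Sigma^\dagger \mathbf{Y} \sim \chi^2_R$. This is immediate from the spectral decomposition $\Sigma = \mathbf{U}\Lambda \mathbf{U}^\top$ and $\Sigma^\dagger = \mathbf{U}\Lambda^\dagger \mathbf{U}^\top$, which reduces the quadratic form to a sum of $R$ independent squared standard normals. Applied to the standardized limit of $n^{\gamma/2}\widehat{\mathbf{Z}}_n / \sqrt{Z_\infty(1-Z_\infty)}$, the continuous mapping theorem combined with the stability of the convergence yields
\begin{equation*}
\frac{n^\gamma\, \widehat{\mathbf{Z}}_n^\top \widehat{\bm{\Sigma}}_\gamma^\dagger \widehat{\mathbf{Z}}_n}{Z_\infty(1-Z_\infty)} \overset{L}{\to} \chi^2_{R_1},
\end{equation*}
and Slutsky's theorem then replaces the denominator by $\widetilde{Z}_n(1-\widetilde{Z}_n)$ to give $T_{\gamma,n} \overset{L}{\to} \chi^2_{R_1}$. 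Cases (b) and (c) follow by identical reasoning, invoking Theorem \ref{th2}(a) and (b) with the respective scaling factors $\sqrt{n}$ and $\sqrt{n}/(\log n)^{\rho-1/2}$, and the covariance matrices $\widehat{\bm{\Sigma}}_{\mathbf{ZZ}}$ and $\widehat{\bm{\Sigma}}^\ast_{\mathbf{ZZ}}$.

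The main obstacle is conceptual rather than computational: because the stable limit is mixed normal with random variance $Z_\infty(1-Z_\infty)$, one must argue that standardization by $\widetilde{Z}_n(1-\widetilde{Z}_n)$ asymptotically cancels this randomness to produce a \emph{pure} chi-squared limit. This is resolved precisely by the joint character of stable convergence, which delivers $Z_\infty$ in the limit alongside the Gaussian factor, and by the a.s.\ consistency $\widetilde{Z}_n \to Z_\infty$, forcing the mixing measure of the ratio to degenerate to a point mass at $1$. A secondary point is that the results are meaningful only on the non-degenerate event $\{0<Z_\infty<1\}$, since otherwise the denominator of the test statistic vanishes; under \eqref{eq60} this event has probability one minus the two boundary atoms, which is the appropriate arena for inference. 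The ranks $R_1, R_2, R_3$ are deterministic, being determined by the spectral data of $\mathbf{W}_0$, so the chi-squared degrees of freedom are unambiguously defined.
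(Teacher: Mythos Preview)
Your proposal is correct and is precisely the argument the paper has in mind. In fact, the paper states this corollary without any proof, treating it as an immediate consequence of the stable CLTs for $\widehat{\mathbf Z}_n$ in Theorems~\ref{th9} and~\ref{th2}, the almost-sure consistency $\widetilde Z_n\to Z_\infty$ from Theorem~\ref{th10}, and the standard fact that $\mathbf Y^\top\Sigma^\dagger\mathbf Y\sim\chi^2_{\mathrm{rank}(\Sigma)}$ for $\mathbf Y\sim\mathcal N(\mathbf 0,\Sigma)$; your write-up supplies exactly these missing details, including the correct handling of the random scaling via stable convergence and Slutsky, and the caveat about the event $\{0<Z_\infty<1\}$ that the paper leaves implicit.
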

	
	The power of the proposed test statistics largely depends on the structure of the adjacency matrix $\mathbf{W}_1$ under the alternative hypothesis ${\bf H}_1$, particularly because its spectral properties may differ from those of $\mathbf{W}_0$ under the null hypothesis ${\bf H}_0$. For instance, the eigenvectors associated with the dominant eigenvalue $1$ under ${\bf H}_0$ and ${\bf H}_1$, denoted by $\mathbf{v}_1$ and $\mathbf{v}_1^\ast$ respectively, may not coincide. Nevertheless, due to \eqref{eq54}, \eqref{eq55} and Theorem \ref{th10}, the term $\widetilde{Z}_n(1-\widetilde{Z}_n)$ remains a estimator for $Z_\infty(1-Z_\infty)$. Similarly, although the matrices $\mathbf{P}$ and $\mathbf{Q}$ constructed from $\mathbf{W}_0$ may no longer correspond to ${\bf P}^\ast,\ {\bf Q}^\ast$ of $\mathbf{W}_1$ under ${\bf H}_1$. The difference in the underlying covariance structures $\widehat{\bm{\Sigma}}_\gamma$, $\widehat{\bm{\Sigma}}_1$ and $\widehat{\bm{\Sigma}}_1^\ast$ under ${\bf H}_0$ and ${\bf H}_1$ determines the divergence between the asymptotic distributions of the test statistics, thereby governing the power of the test. When this structural deviation is sufficiently pronounced, the proposed procedures achieve high asymptotic power.

	In the following, we discuss two specific configurations of the null adjacency matrix ${\bf W}_0$, each reflecting a distinct form of hierarchical or directional influence.
	
	\begin{example}[Top--Down Influence Cascade]\label{ex1}
		The first example considers a cascade-like structure given by
		\begin{equation*}
			{\bf W}_0={\bf e}_1{\bf e}^\top_1+(1-\alpha)\sum\limits_{i=2}^{N}{\bf e}_i{\bf e}^\top_i+\alpha\sum\limits_{i=2}^{N}{\bf e}_{i-1}{\bf e}^\top_i.
		\end{equation*}
		which models a top-down influence flow commonly observed in hierarchical organizations or information cascades. The eigenvalues and eigenvectors of this matrix can be explicitly computed, with the eigenvalue $1$ being simple and $1-\alpha$ having multiplicity $N-1$. From these, the Jordan decomposition matrices ${\bf P}$, ${\bf Q}$ and vector ${\bf q}_1$ are derived explicitly, with
		\begin{equation*}
			{\bf q}_1=\sqrt{N}{\bf e}_1,\ {\bf p}_h=-\frac{1}{\sqrt{N}}{\bf e}_h\alpha^{h-2}, \ \ {\bf q}_h=\sqrt{N}\alpha^{2-h}({\bf e}_1-{\bf e}_h)\ \ {\rm for}\ h\ge2.
		\end{equation*}
		Substituting these into equations \eqref{eq4}, \eqref{eq1} and \eqref{eq68}, the corresponding covariance matrices can be explicitly calculated, 
		\begin{align*}
			&\widehat{\Sigma}_\gamma={\bf P}\widehat{S}_\gamma{\bf P}^\top,\ \ [\widehat{S}_\gamma]_{i,j}=\sum\limits_{s=0}^{j-1}\sum\limits_{t=0}^{i-1}\frac{c(t+s)}{(2\alpha)^{t+s+1}}{\bf q}^\top_{i-t+1}{\bf q}_{j-s+1},\ \ \\
			&\widehat{\Sigma}_{\bf ZZ}={\bf P}\widehat{S}_{\bf ZZ}{\bf P}^\top,\ \ [\widehat{S}_{\bf ZZ}]_{i,j}=\sum\limits_{s=0}^{j-1}\sum\limits_{t=0}^{i-1}\frac{c^{t+s+2}(t+s)!}{(-1+2\alpha c)^{t+s+1}}{\bf q}^\top_{i-t+1}{\bf q}_{j-s+1},\ \  \\
			&\widehat{\Sigma}^\ast_{\bf ZZ}={\bf P}\widehat{S}^\ast_{\bf ZZ}{\bf P}^\top,\ \ [\widehat{S}^\ast_{\bf ZZ}]_{i,j}=\frac{c^2(N-1)}{2(N-1)-1}{\bf q}^\top_{i+1}{\bf q}_{j+1}.
		\end{align*}
	\end{example}

	\begin{example}[Two-Group Hierarchical Network]\label{ex2}
		The second example focuses on a two-group hierarchical network represented by the block upper-triangular matrix
		\begin{equation*}
			{\bf W}_0=\begin{pmatrix}
				{\bf W}_{11} & {\bf W}_{12}\\
				{\bf 0} & {\bf W}_{22}
			\end{pmatrix},
		\end{equation*}
		where
		\begin{equation*}
			{\bf W}_{11}=\frac{1-\alpha}{N_1}{\bf 1}_{N_1}{\bf 1}^\top_{N_1}+\alpha{\bf I}_{N_1},\ \ {\bf W}_{12}=\frac{1-\beta}{N_1}{\bf 1}_{N_1}{\bf 1}^\top_{N_2},\ \ {\bf W}_{22}=\beta{\bf I}_{N_2}.
		\end{equation*}
		with $\alpha>\beta$.This structure captures the hierarchical influence flowing from one subgroup to another in a unidirectional manner. The eigenvalue $1$ is simple, $\alpha$ has geometric multiplicity $N_1 - 1$, and $\beta$ has multiplicity $N_2$. The associated Jordan decomposition matrices ${\bf P}$, ${\bf Q}$ and vector ${\bf q}_1$ are explicitly given as follows. The vector ${\bf q}_1=\sum_{i=1}^{N_1}{\bf e}_i$. For $2\le h\le N_1$,
		\begin{equation*}
			{\bf p}_h=\frac{1}{\sqrt{N}}\left(\sum\limits_{i=1}^{N_1}{\bf e}_i-N_1{\bf e}_h\right),\ \ {\bf q}_h=\frac{\sqrt{N}}{N_1}({\bf e}_1-{\bf e}_h),
		\end{equation*}
		and for $N_1< h\le N$,
		\begin{equation*}
			{\bf p}_h=\frac{N_1}{\sqrt{N}}{\bf e}_h,\ \ {\bf q}_h=\frac{\sqrt{N}}{N_1}\left(-\frac{1}{N_1}\sum\limits_{i=1}^{N_1}{\bf e}_i+{\bf e}_h\right).
		\end{equation*}
		These bases facilitate the explicit derivation of covariance matrix entries, which depend on subgroup dimensions and parameters, thereby yielding a block-structured covariance 
		\begin{equation*}
			\widehat{\Sigma}_\gamma={\bf P}\widehat{S}_\gamma{\bf P}^\top,\ \ \widehat{\Sigma}_{\bf ZZ}={\bf P}\widehat{S}_{\bf ZZ}{\bf P}^\top,\ \ \widehat{\Sigma}^\ast_{\bf ZZ}={\bf P}\widehat{S}^\ast_{\bf ZZ}{\bf P}^\top.
		\end{equation*}
		where for $1\le i\le N_1-1$ and $1\le j\le N_1-1$,
		\begin{align*}
			&[\widehat{S}_\gamma]_{i,j}=\sum\limits_{s=0}^{j-1}\sum\limits_{t=0}^{i-1}\frac{c(t+s)}{(2-2\alpha)^{t+s+1}}{\bf q}^\top_{i-t+1}{\bf q}_{j-s+1},\ \ [\widehat{S}^\ast_{\bf ZZ}]_{i,j}=\frac{c^2(N_1-1)}{2(N_1-1)-1}{\bf q}^\top_{i+1}{\bf q}_{j+1},\\
			&[\widehat{S}_{\bf ZZ}]_{i,j}=\sum\limits_{s=0}^{j-1}\sum\limits_{t=0}^{i-1}\frac{c^{t+s+2}(t+s)!}{[-1+(2-2\alpha) c]^{t+s+1}}{\bf q}^\top_{i-t+1}{\bf q}_{j-s+1},
		\end{align*}
		for $1\le i\le N_1-1$ and $N_1\le j\le N-1$,
		\begin{align*}
			&[\widehat{S}_\gamma]_{i,j}=\sum\limits_{s=0}^{j-(N_1-1)-1}\sum\limits_{t=0}^{i-1}\frac{c(t+s)}{(2-\alpha-\beta)^{t+s+1}}{\bf q}^\top_{i-t+1}{\bf q}_{j-s+1},\ \ [\widehat{S}^\ast_{\bf ZZ}]_{i,j}=0,\\
			&[\widehat{S}_{\bf ZZ}]_{i,j}=\sum\limits_{s=0}^{j-(N_1-1)-1}\sum\limits_{t=0}^{i-1}\frac{c^{t+s+2}(t+s)!}{[-1+(2-\alpha-\beta) c]^{t+s+1}}{\bf q}^\top_{i-t+1}{\bf q}_{j-s+1}.
		\end{align*}
		and for $N_1\le i\le N-1$ and $N_1\le j\le N-1$,
		\begin{align*}
			&[\widehat{S}_\gamma]_{i,j}=\sum\limits_{s=0}^{j-(N_1-1)-1}\sum\limits_{t=0}^{i-(N_1-1)-1}\frac{c(t+s)}{(2-2\beta)^{t+s+1}}{\bf q}^\top_{i-t+1}{\bf q}_{j-s+1},\ \ [\widehat{S}^\ast_{\bf ZZ}]_{i,j}=0,\\
			&[\widehat{S}_{\bf ZZ}]_{i,j}=\sum\limits_{s=0}^{j-(N_1-1)-1}\sum\limits_{t=0}^{i-(N_1-1)-1}\frac{c^{t+s+2}(t+s)!}{[-1+(2-2\beta) c]^{t+s+1}}{\bf q}^\top_{i-t+1}{\bf q}_{j-s+1}.
		\end{align*}
	\end{example}
	In the above examples, by computing the Moore--Penrose generalized inverses of the covariance matrices and substituting them into \eqref{eq56}--\eqref{eq58}, we obtain the test statistics required for statistical inference. These two examples enhance the theoretical understanding while representing practically relevant hierarchical and unidirectional influence networks, and provide concrete settings for applying the proposed hypothesis testing procedures to hierarchical and unidirectional influence networks in reinforced stochastic systems.

	\subsection{Construction of Confidence Regions}
	
	Beyond hypothesis testing, our asymptotic results provide a framework for constructing confidence regions for the model's key parameters, thereby quantifying the uncertainty of parameter estimates.

	First, we can construct a confidence interval for the synchronization limit $Z_\infty$. The basis for this is the asymptotic normality of its estimator, $\widetilde{Z}_n$. Specifically, Theorem \ref{th3} establishes the following stable convergence,
	\begin{equation*}
		n^{\gamma-\frac{1}{2}}(\widetilde{Z}_n-Z_\infty) \to {\cal N}(0,Z_\infty(1-Z_\infty)\widetilde{\sigma}^2_\gamma),
	\end{equation*}
	where $\widetilde{\sigma}^2_\gamma=\frac{c^2\|{\bf q}_1\|^2}{N(2\gamma-1)}$. Since the true limit $Z_\infty$ in the variance term is unknown, we replace it with its consistent estimator $\widetilde{Z}_n$. This yields an approximate $100(1-\alpha)\%$ confidence interval for $Z_\infty$ given by
	\begin{equation}\label{eq:ci_z_infty}
		\widetilde{Z}_n \pm z_{\alpha/2} \sqrt{n^{-(2\gamma-1)} \widetilde{Z}_n(1-\widetilde{Z}_n) \widetilde{\sigma}^2_\gamma},
	\end{equation}
	where $z_{\alpha/2}$ is the upper $\alpha/2$ quantile of the standard normal distribution.

	Second, we develop a method to construct confidence regions for the network's key structural parameters, such as the influence parameter $\alpha$ in Example \ref{ex1} or the vector ${\bm\theta}=(\alpha,\beta)$ in Example \ref{ex2}. The procedure is based on the fundamental principle of inverting a hypothesis test. Specifically, a $100(1-\alpha)\%$ confidence region for a true parameter vector is the set of all parameter values for which the corresponding null hypothesis would not be rejected at the significance level $\alpha$. To illustrate, consider constructing a confidence region for the parameter vector ${\bm\theta}=(\alpha,\beta)$ from Example \ref{ex2}. The choice of the test statistic depends on the specific parameter regime. Assuming the system falls within the regime where $\gamma=1$ and $\tau<1-(2c)^{-1}$, we use the corresponding test statistic $T_{1,n}$ from \eqref{eq57}. The resulting $100(1-\alpha)\%$ confidence region is then given by
	\begin{equation}
		\left\{{\bm\theta}:T_{1,n}\le\chi^2_{R_2,1-\alpha}\right\},
	\end{equation}
	where $T_{1,n}$ is the test statistic treated as a function of the parameter vector ${\bm\theta}$, and $\chi^2_{R_2,1-\alpha}$
	is the corresponding critical value from the Chi-squared distribution with $R_2$ degrees of freedom. This provides a practical tool for estimating and quantifying the uncertainty of the underlying influence structure from observed data.

	\section{Simulation Studies}\label{sec7}
	While the almost sure convergence of the joint process $({\bf Z}_n, {\bf N}_n)_n$ can be established via martingale arguments, the distribution of the synchronization limit $Z_\infty$ in such models remains largely unexplored. Therefore, in this section, we conduct simulation studies to numerically explore the properties of the limit $Z_\infty$. Our simulations are designed to investigate two main aspects: first, the overall shape of the limit distribution under various initial conditions; second, the prevalence of polarization, i.e., convergence to the boundaries 0 or 1.

	In our simulations, we consider a two-group hierarchical network with a total of $N=4$ agents. The population is partitioned into a leading subgroup ${\cal G}_1$ of size $N_1=2$ and a downstream subgroup ${\cal G}_2$ of size $N_2=2$. The interactions are governed by the following adjacency matrix ${\bf W}$,
	\begin{equation}
		{\bf W}=\begin{pmatrix}
			\alpha & 1-\alpha & (1-\beta)/2 & (1-\beta)/2 \\
			1-\alpha & \alpha & (1-\beta)/2 & (1-\beta)/2 \\
			0 & 0 & \beta & 0 \\
			0 & 0 & 0 & \beta
		\end{pmatrix}.
	\end{equation}
	In this setup, we fix the downstream self-reinforcement parameter at $\beta=0.5$. To investigate the impact of the leading group's internal structure, we compare two scenarios for its self-reinforcement parameter $\alpha$: a strong case with $\alpha=0.8$ and a weak case with $\alpha=0.2$.
	
	For each of these two network structures, we test six initial state ${\bf Z}_0$ configurations. These configurations are designed to explore the system's behavior under different initial states and are formed by pairing three distinct scenarios for the leading subgroup ${\cal G}_1$ with two for the downstream subgroup ${\cal G}_2$. We consider three scenarios for the initial state of ${\cal G}_1$: a consensus scenario with ${\bf Z}^{(1)}_0=(0.5,0.5)^\top$; an asymmetric scenario with ${\bf Z}^{(1)}_0=(0.1,0.5)^\top$; and a random scenario where the components are drawn i.i.d. from ${\rm U}(0,1)$. Each of these leading group configurations is then paired with downstream initial states for ${\cal G}_2$ of either all zeros, ${\bf Z}^{(2)}_0=(0,0)^\top$, or all ones, ${\bf Z}^{(2)}_0=(1,1)^\top$. The parameters for the step-size sequence $r_n\sim cn^{-\gamma}$ are set to $\gamma=0.9$ and $c=1$. Each simulation for a given scenario is run for $n_{\rm steps}=20000$ iterations, and this process is repeated independently for $n_{\rm sims}=5000$ times to obtain the empirical distribution of the final states.

	Figures \ref{fig1} and \ref{fig2} visualize the simulation results for the strong ($\alpha=0.8$) and weak ($\alpha=0.2$) self-reinforcement cases, respectively. Each figure presents a $3 \times 2$ grid of panels, where each panel displays the estimated probability densities of the final states. 
	The figures visually confirm several theoretical findings. First, within each panel, the density curves for the four agents are indistinguishable, providing strong evidence for the synchronization proven in Theorem \ref{th7}. Second, a row-wise comparison demonstrates the system's invariance to the initial states of the downstream group, which provides strong visual support for Theorem \ref{th11}. Finally, a column-wise comparison reveals the limit distribution's high sensitivity to the initial state of the leading subgroup, corroborating the weighted average structure established in Corollary \ref{co1}. Moreover, comparing the corresponding panels of Figure \ref{fig1} and Figure \ref{fig2} reveals that for the symmetric leading group under consideration, the distribution of the synchronization limit $Z_\infty$ is robust to the change of self-reinforcement parameter $\alpha$.

	\begin{figure}[htbp]
		\centering
		\includegraphics[width=0.9\textwidth]{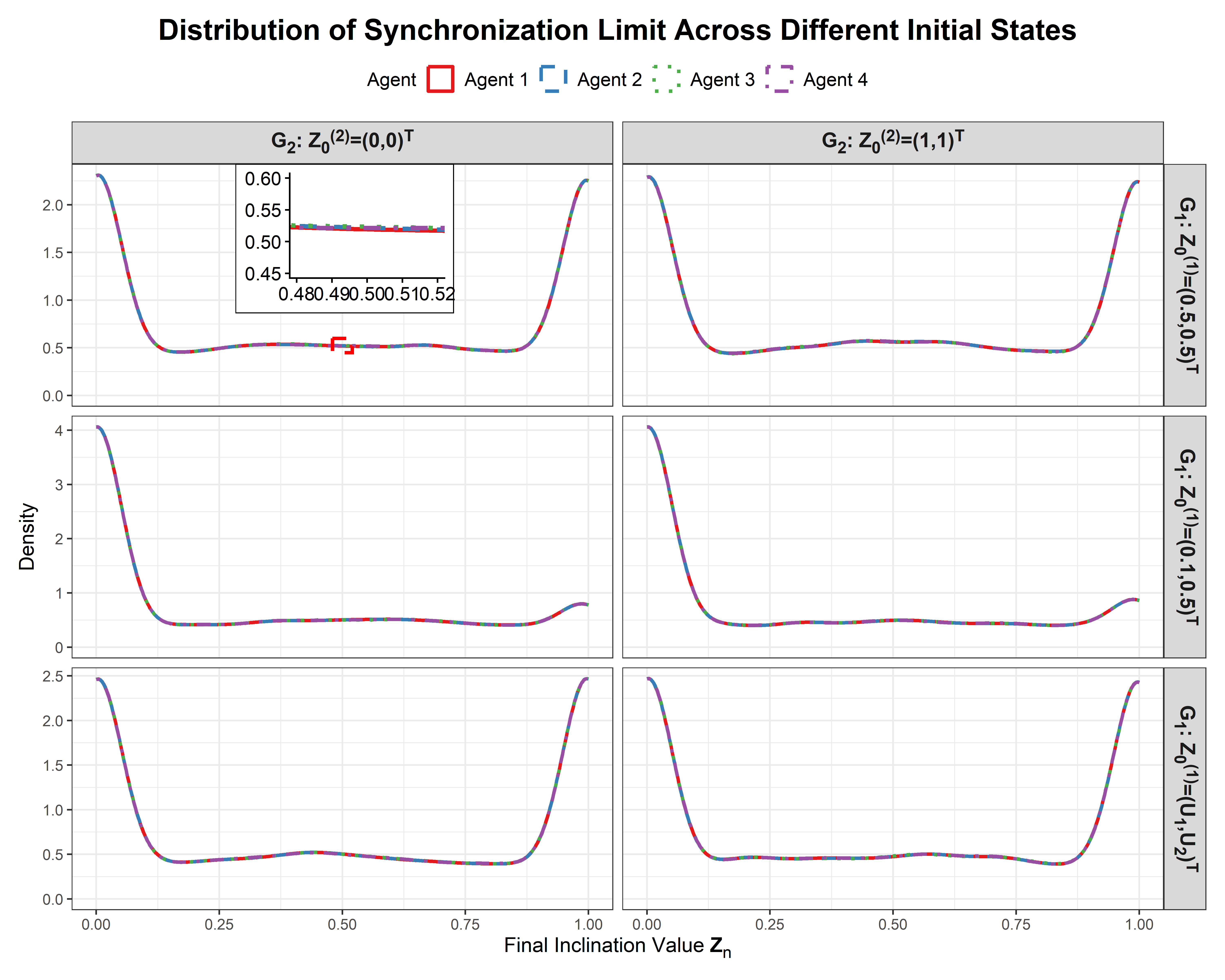}
		\vspace{-15pt}
		\caption{Distribution of the synchronization limit $Z_\infty$ for the two-group hierarchical network with strong self-reinforcement ($\alpha=0.8$). The six panels correspond to different initial states for the leading ${\cal G}_1$ and subsequent ${\cal G}_2$ subgroups. Within each panel, the four overlapping density curves represent the four agents.}
		\label{fig1}
	\end{figure}

	\begin{figure}[htbp]
		\centering
		\includegraphics[width=0.9\textwidth]{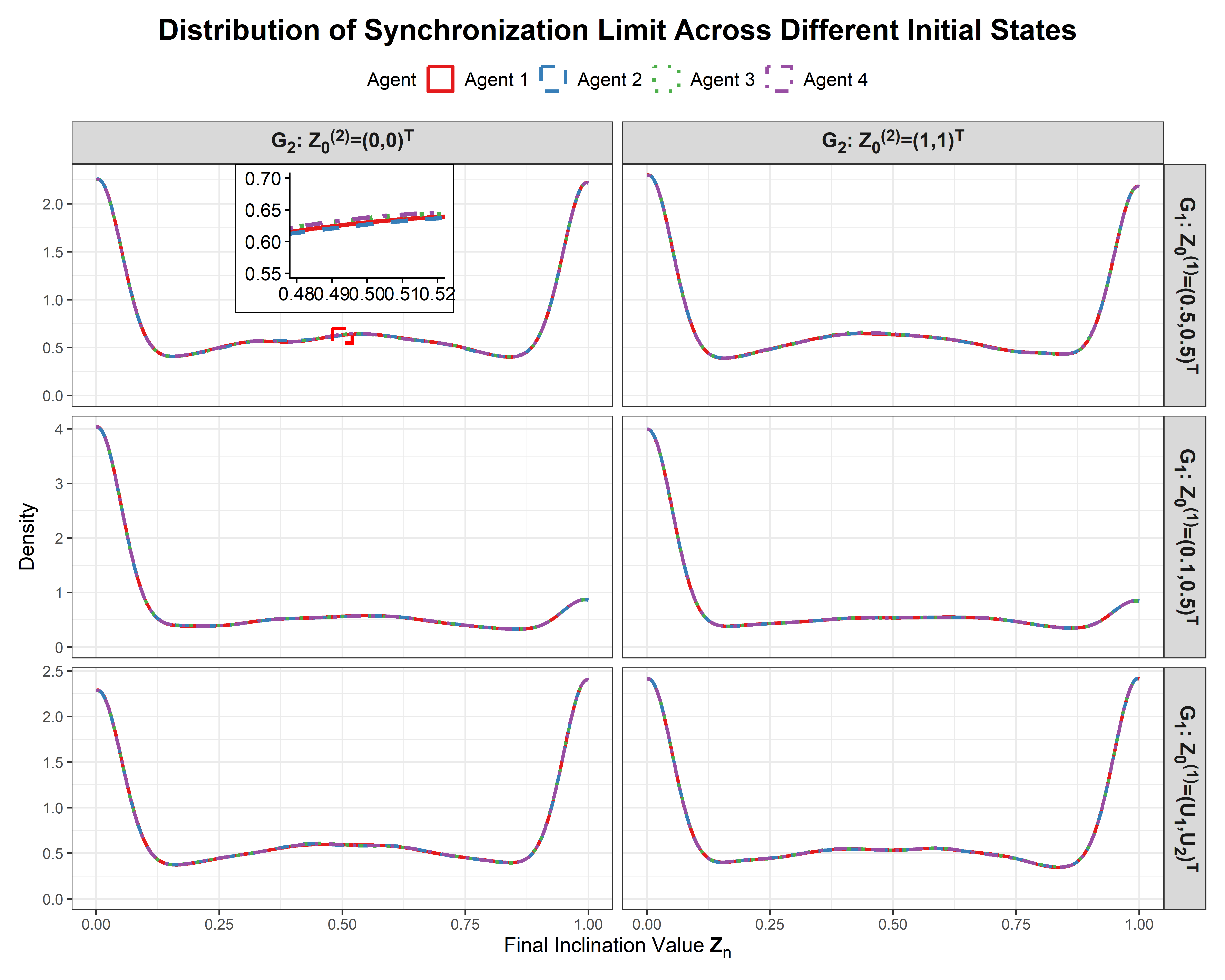}
		\vspace{-15pt}
		\caption{Distribution of the synchronization limit $Z_\infty$ for the two-group hierarchical network with weak self-reinforcement ($\alpha=0.2$). The six panels correspond to different initial states for the leading ${\cal G}_1$ and subsequent ${\cal G}_2$ subgroups. Within each panel, the four overlapping density curves represent the four agents.}
		\label{fig2}
	\end{figure}
	\begin{table}[htbp]
		\centering 
		\caption{Percentage of final states falling into boundary and central regions for different leading group structures $\alpha$ and initial states ${\bf Z}_0$. For the random cases, the initial states of the leading subgroup ($U_1, U_2$) are drawn independently from a ${\rm U}[0,1]$ distribution for each simulation run.}		
		\label{tab1}
		\begin{tabular}{llcccc}
			\toprule
			\multicolumn{2}{c}{Scenario} & \multicolumn{4}{c}{Percentage of final states in interval (\%)} \\
			\cmidrule(lr){1-2} \cmidrule(lr){3-6}
			$\alpha$ & ${\bf Z}_0^\top$ & $[0, 0.05]$ & $(0.05, 0.95)$ & $[0.95, 1]$ & At boundary (0 or 1) \\
			\midrule
			\multirow{6}{*}{\shortstack{\textbf{0.8} \\ (Strong)}} 
			& $(0.5,0.5,0,0)$    & 27.89 & 44.77 & 27.34 & 38.31 \\
			& $(0.5,0.5,1,1)$    & 28.47 & 44.58 & 26.95 & 38.20 \\
			& $(0.1,0.5,0,0)$    & 50.23 & 39.31 & 10.46 & 49.38 \\
			& $(0.1,0.5,1,1)$    & 49.66 & 40.20 & 10.14 & 38.81 \\
			& $(U_1,U_2,0,0)$ & 28.20 & 42.01 & 29.79 & 42.87 \\
			& $(U_1,U_2,1,1)$    & 29.96 & 40.09 & 29.95 & 43.86 \\
			\midrule
			\multirow{6}{*}{\shortstack{\textbf{0.2} \\ (Weak)}} 
			& $(0.5,0.5,0,0)$    & 28.42 & 44.00 & 27.58 & 38.19 \\
			& $(0.5,0.5,1,1)$    & 28.09 & 44.46 & 27.45 & 38.10 \\
			& $(0.1,0.5,0,0)$    & 50.49 & 40.27 & 9.24  & 48.43 \\
			& $(0.1,0.5,1,1)$    & 50.37 & 39.39 & 10.24 & 38.87 \\
			& $(U_1,U_2,0,0)$    & 30.21 & 39.29 & 30.50 & 44.08 \\
			& $(U_1,U_2,1,1)$    & 30.50 & 39.48 & 30.02 & 43.98 \\
			\bottomrule
		\end{tabular}
	\end{table}
	
	To quantitatively analyze the system's behavior, we report in Table \ref{tab1} the percentage of simulation runs where the final state falls into several key regions. As synchronization theory ensures all agents converge to the same limit, these percentages are computed from the pooled data of all four agents for each scenario to provide a more stable estimate of the $Z_\infty$ distribution.
	
	The results in Table \ref{tab1} lead to several key observations. First, they offer numerical evidence that polarization is a prevalent outcome. For instance, in the asymmetric case with self-reinforcement $\alpha=0.8$, the total proportions of runs converging to the boundaries are high. While the total percentages in the two paired scenarios ($49.38\%$ vs. $\ 38.81\%$) appear different, a closer look reveals that the constituent proportions converging to the $[0, 0.05]$ region ($50.23\%$ vs.\ $49.66\%$) and the $[0.95, 1]$ region ($10.46\%$ vs. $10.14\%$) are remarkably close. This slight discrepancy in the totals is attributable to finite-time effects, while the consistency of the components provides numerical corroboration for the conclusion of Theorem \ref{th11} that the limit distribution is independent of the downstream group's initial state. Second, the table also quantitatively confirms the robustness of the system to the self-reinforcement parameter $\alpha$, a phenomenon visually observed in Figures \ref{fig1} and \ref{fig2}. A comparison between the strong ($\alpha=0.8$) and weak ($\alpha=0.2$) cases shows the percentages in each corresponding interval being nearly identical across all initial conditions. This finding confirms that for the specific symmetric structure of the leading group considered, the strength of the self-reinforcement parameter $\alpha$ has a negligible impact on the final distribution of the synchronization limit.
	
	These results underscore a crucial practical implication: the system's ultimate consensus is critically dependent on the initial state of the leading group, while being remarkably robust to both the downstream followers' initial state and the leaders’ self-reinforcement strength $\alpha$.

	\begin{appendix}
    \section*{Auxiliary functions}\label{sec0}
		The auxiliary functions $\mathcal{H}(\cdot)$, $\mathcal{N}_i(\cdot)$, and $\mathcal{D}_i(\cdot)$, which appear in the covariance matrix components of Theorem \ref{th5}, are defined as follows. The functions $\mathcal{N}_i(\cdot)$ are
\begin{align*}
    {\cal N}_1(k,m,\lambda_b,c) &:= \sum_{q=0}^{k-m}[c(1-\lambda_b)]^q,  \\
    {\cal N}_2(k,\lambda_a,\lambda_b,c) &:= \sum_{q=0}^{k}{k+1\choose q}[c(1-\lambda_a)]^q[c(1-\lambda_b)-1]^{k-q},  \\
    {\cal N}_3(k,\lambda_a,c) &:= [c(1-\lambda_a)]^{k+1}, \\
    {\cal N}_4(k,\lambda_a,c) &:=\sum\limits_{q=0}^k[c(1-\lambda_a)-1]^{k-q}.
\end{align*}
The functions $\mathcal{D}_i(\cdot)$ are
\begin{align*}
    {\cal D}_1(k, m, \lambda_a, \lambda_b, c) &:= [c(1-\lambda_a)]^{k+1}[c(1-\lambda_b)]^{k+1-m}, \\
    {\cal D}_2(k, m, \lambda_a, \lambda_b, c) &:= [c(1-\lambda_a)]^{k+1}[c(1-\lambda_b)]^{k+1-m}[-1+c(2-\lambda_a-\lambda_b)]^{k+1}.
\end{align*}
Finally, the function $\mathcal{H}(\cdot)$ is
\begin{align*} 
{\cal H}(k,\lambda_a,\lambda_b,c;C_1,C_2,C_3)&:= k!\sum_{m=0}^{k}{k+1 \choose m}[c(1-\lambda_a)-1]^{k-m}\cdot \\
&\quad\Bigg\{\frac{C_1{\cal N}_1(k,m,\lambda_b,c)}{{\cal D}_1(k, m, \lambda_a, \lambda_b, c)}+\frac{C_2{\cal N}_2(k,\lambda_a,\lambda_b,c)+C_3{\cal N}_3(k,\lambda_a,c)}{{\cal D}_2(k,m,\lambda_a,\lambda_b,c)}\Bigg\}.
\end{align*}

\setcounter{section}{0}             
\renewcommand{\thesection}{\Alph{section}} 

		\section{Technical results}\label{secA}
		This section presents the detailed derivation of the leading-order terms in the second-order convergence analysis. The proofs involve carefully tracking the asymptotic behavior of higher-order remainders and establishing their dominance relations. For brevity, the detailed proofs of the following lemmas are deferred to the Supplementary Material.

		\begin{lemma}[Lemma A.4 of \cite{r1}]\label{le3}
			For $j\in\{1,2,\cdots,S\}$ and for any $\varepsilon \in(0,1)$, we have that
			\begin{equation*}
				|p_{n,j}|=\begin{cases}O\Big(\exp \Big[-(1-\varepsilon) \frac{c (1-{\rm Re}(\lambda_j))}{1-\gamma} n^{1-\gamma}\Big]\Big) & \text { for } 1/2<\gamma<1, \\
					O\left(n^{-(1-\varepsilon) c (1-{\rm Re}(\lambda_j))}\right) & \text { for } \gamma=1\end{cases}
			\end{equation*}
			and
			\begin{equation*}
				|\ell_{n,j}|= \begin{cases}O\Big(\exp \Big[(1+\varepsilon) \frac{c (1-{\rm Re}(\lambda_j))}{1-\gamma} n^{1-\gamma}\Big]\Big) & \text { for } 1/2<\gamma<1, \\ O\big(n^{(1+\varepsilon) c (1-{\rm Re}(\lambda_j))}\big) & \text { for } \gamma=1.\end{cases}	
			\end{equation*}
			Moreover, if we replace \eqref{eq11} with the condition
			\begin{equation*}
				n^\gamma r_n-c=O\left(n^{-\gamma}\right),
			\end{equation*}
			we have that
			\begin{equation}
				|p_{n,j}|= \begin{cases}O\Big(\exp \Big[-\frac{c (1-{\rm Re}(\lambda_j))}{1-\gamma} n^{1-\gamma}\Big]\Big) & \text { for } 1/2<\gamma<1, \\
					O\left(n^{-c (1-{\rm Re}(\lambda_j))}\right) & \text { for } \gamma=1\end{cases}.
			\end{equation}
			and
			\begin{equation}
				\left|\ell_{n, j}\right|= \begin{cases}O\Big(\exp \Big[\frac{c(1-{\rm Re}(\lambda_j))}{1-\gamma} n^{1-\gamma}\Big]\Big) & \text { for } 1/2<\gamma<1, \\ 
					O\left(n^{c (1-{\rm Re}(\lambda_j))}\right) & \text { for } \gamma=1.\end{cases}
			\end{equation}
		\end{lemma}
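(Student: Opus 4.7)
The plan is to reduce all four bounds to a single asymptotic expansion of $\log|p_{n,j}|$, since $|\ell_{n,j}|=|p_{n,j}|^{-1}$. Recalling that $p_{n,j}=\prod_{k=m_0}^{n}[1-r_k(1-\lambda_j)]$ and that $m_0$ is chosen large enough so that $|r_k(1-\lambda_j)|<1/2$, I would take principal logarithms and write
\begin{equation*}
\log p_{n,j}=\sum_{k=m_0}^{n}\log\bigl(1-r_k(1-\lambda_j)\bigr)=-\sum_{k=m_0}^{n}r_k(1-\lambda_j)+\sum_{k=m_0}^{n}\varepsilon_k,
\end{equation*}
where $\varepsilon_k=O(r_k^2)$ comes from the Taylor expansion $\log(1-z)=-z+O(|z|^2)$. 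Taking real parts gives
$\log|p_{n,j}|=-\bigl(1-\mathrm{Re}(\lambda_j)\bigr)\sum_{k=m_0}^{n}r_k+O\bigl(\sum_{k=m_0}^{n}r_k^2\bigr)$.

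The key observation is that under Assumption \ref{as1} with $\gamma\in(1/2,1]$, the remainder $\sum_k r_k^2$ is finite because $r_k^2=O(k^{-2\gamma})$ with $2\gamma>1$; hence it contributes only a bounded additive constant. The leading term is then controlled via $r_k\sim ck^{-\gamma}$: for any $\varepsilon\in(0,1)$ there exists $k_0$ such that $(1-\varepsilon)ck^{-\gamma}\le r_k\le (1+\varepsilon)ck^{-\gamma}$ for $k\ge k_0$, so
\begin{equation*}
\sum_{k=m_0}^{n}r_k=\begin{cases}\dfrac{c\,n^{1-\gamma}}{1-\gamma}(1+o(1)) & \text{for }1/2<\gamma<1,\\[4pt]c\log n\,(1+o(1)) & \text{for }\gamma=1,\end{cases}
\end{equation*}
by Abel summation or a direct integral comparison. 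Absorbing the $(1\pm\varepsilon)$ slack and the $O(1)$ remainder yields exponential bounds with rate $(1\mp\varepsilon)\,c(1-\mathrm{Re}(\lambda_j))/(1-\gamma)$ in the $\gamma<1$ case and polynomial bounds $n^{\mp(1\mp\varepsilon)c(1-\mathrm{Re}(\lambda_j))}$ in the $\gamma=1$ case, establishing the first two displays. The bounds for $\ell_{n,j}$ follow by reciprocation, since $\log|\ell_{n,j}|=-\log|p_{n,j}|$.

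For the sharper estimates under the stronger hypothesis $n^{\gamma}r_n-c=O(n^{-\gamma})$, I would refine the above argument by writing $r_k=ck^{-\gamma}+\delta_k$ with $\delta_k=O(k^{-2\gamma})$. Then $\sum_k|\delta_k|<\infty$ because $2\gamma>1$, and $\sum_{k=m_0}^{n}ck^{-\gamma}$ can be compared to $\int_{m_0}^{n}cx^{-\gamma}\,dx$ with error $O(1)$ via Euler--Maclaurin. This eliminates the $\varepsilon$ slack entirely, giving the sharp rates $c(1-\mathrm{Re}(\lambda_j))/(1-\gamma)$ and $c(1-\mathrm{Re}(\lambda_j))$ respectively.

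The main obstacle I anticipate is bookkeeping rather than conceptual difficulty: one must verify that the passage from $\log p_{n,j}$ back to $|p_{n,j}|$ preserves the claimed constants (which is where taking real parts is essential, since $\lambda_j$ may be complex), and that the $O(1)$ constants absorbed into the exponential or polynomial rates do not interfere with the stated asymptotics. Because this is essentially Lemma A.4 of \cite{r1} transplanted to our setting — and our $p_{n,j}$ has the identical form as in that paper, with $\lambda_j$ now playing the role of their non-dominant eigenvalues — the argument transfers verbatim once one notes that the hierarchical structure of $\mathbf{W}$ does not alter the scalar products defining $p_{n,j}$.
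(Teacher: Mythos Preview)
Your approach is correct and is the standard one. The paper does not supply its own proof of this lemma: it is simply quoted as Lemma~A.4 of \cite{r1}, so there is nothing to compare against beyond noting that your log-plus-Taylor argument is precisely the method used in that reference. The hierarchical structure of $\mathbf{W}$ is indeed irrelevant here, since $p_{n,j}$ depends only on the scalar eigenvalue $\lambda_j$ and the step-size sequence.
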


		\begin{lemma}\label{le2}
			For a fixed $q$, we have that 
			\begin{equation*}
				\sum\limits_{k+1\le j_1\neq\cdots\neq j_q\le n}r_{j_1}\cdots r_{j_q}=\begin{cases}
					O((n^{1-\gamma}-k^{1-\gamma})^q)\  &\text { for } 1/2<\gamma<1,\\
					O((\log n-\log k)^q)\  &\text { for } \gamma=1.
				\end{cases}
			\end{equation*}
		\end{lemma}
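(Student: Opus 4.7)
The plan is to reduce the constrained sum over distinct indices to a one-dimensional sum by dropping the distinctness constraint, and then estimate that sum by integral comparison using Assumption~\ref{as1}. This is a routine combinatorial-analytic estimate rather than a result with genuine subtlety, so the proof should be short.

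First, I would observe that because $r_j \ge 0$ for every $j$, the constrained sum is dominated by the unrestricted product sum:
\begin{equation*}
\sum_{k+1\le j_1\neq\cdots\neq j_q\le n}r_{j_1}\cdots r_{j_q}\;\le\; \sum_{j_1,\ldots,j_q=k+1}^{n} r_{j_1}\cdots r_{j_q}\;=\;\Big(\sum_{j=k+1}^{n} r_j\Big)^{q}.
\end{equation*}
This reduces the problem to estimating the one-dimensional partial sum $S_{k,n}:=\sum_{j=k+1}^{n} r_j$.

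Second, I would invoke Assumption~\ref{as1}: the relation $\lim_{n\to\infty} n^{\gamma} r_n = c$ together with $r_n\in[0,1)$ yields a constant $C>0$ (depending only on $c$ and $\gamma$) such that $r_j\le C j^{-\gamma}$ for all $j\ge 1$. Monotonicity of $t\mapsto t^{-\gamma}$ then gives the integral comparison
\begin{equation*}
S_{k,n}\;\le\;C\sum_{j=k+1}^{n} j^{-\gamma}\;\le\;C\int_{k}^{n} t^{-\gamma}\,dt\;=\;
\begin{cases}
\dfrac{C}{1-\gamma}\bigl(n^{1-\gamma}-k^{1-\gamma}\bigr) & \text{for } 1/2<\gamma<1,\\[4pt]
C\bigl(\log n-\log k\bigr) & \text{for } \gamma=1.
\end{cases}
\end{equation*}
Raising this estimate to the $q$-th power yields the stated $O$-bounds, with an implicit constant of the form $C^{q}$ (respectively $(C/(1-\gamma))^{q}$) that depends only on $q$, $c$, and $\gamma$, as required since $q$ is fixed.

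There is essentially no obstacle in this argument. The only mild point of care is that the asymptotic relation $r_n\sim cn^{-\gamma}$ need not hold uniformly for small $n$, but since $r_j\in[0,1)$ for all $j$, any finite initial segment contributes at most a bounded additive error which is immediately absorbed into the $O(\cdot)$ notation. Consequently, no refinement of the argument is needed, and the lemma follows at once from these two steps.
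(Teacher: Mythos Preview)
Your proposal is correct. The paper defers the proof of this lemma to its Supplementary Material, which is not included in the provided source, so a line-by-line comparison is not possible; that said, the bound-then-integrate argument you give (drop the distinctness constraint to obtain $(\sum_{j=k+1}^n r_j)^q$, then use $r_j\le Cj^{-\gamma}$ and integral comparison) is the standard and essentially only natural route for this elementary estimate, and is almost certainly what the paper does as well.
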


		\begin{lemma}\label{le1}
			Let ${\bf T}^{(s)}_{k+1,n}=\prod\limits_{j=k+1}^n[{\bf I}-r_j({\bf I}-{\bf J}^\top_s)]$. Then, for all $t\in\{1,\dots,\rho_s\}$, the diagonal entry  $[{\bf T}^{(s)}_{k+1,n}]_{t,t}$ is
			\begin{equation*}
				[{\bf T}^{(s)}_{k+1,n}]_{t,t}=\prod\limits_{j=k+1}^n[1-r_j(1-\lambda_s)]=p_{n,s}l_{k,s},
			\end{equation*}
			and for all $t\in\{1,\dots,\rho_s\}$, $q\in\{1,\dots,\rho_s-1\}$, the off-diagonal entry $[{\bf T}^{(s)}_{k+1,n}]_{t,t-q}$ is
			\begin{align*}
				[{\bf T}^{(s)}_{k+1,n}]_{t,t-q}=&\sum\limits_{k+1\le j_1\neq\cdots\neq j_q\le n}\frac{r_{j_1}\cdots r_{j_q} }{[1-r_{j_1}(1-\lambda_s)]\cdots[1-r_{j_q}(1-\lambda_s)]}\prod\limits_{j=k+1}^n[1-r_j(1-\lambda_s)]\\
				&=\sum\limits_{k+1\le j_1\neq\cdots\neq j_q\le n}\frac{r_{j_1}\cdots r_{j_q} }{[1-r_{j_1}(1-\lambda_s)]\cdots[1-r_{j_q}(1-\lambda_s)]}p_{n,s}l_{k,s}.
			\end{align*}
			Moreover, for a fixed constant $m_0$, it holds that
			\begin{equation}\label{eq17}
				|[{\bf T}^{(s)}_{m_0,n}]_{t,t-q}|=\begin{cases}
					O\Big(n^{(1-\gamma)q}\exp \Big[-(1-\varepsilon) \frac{c(1-{\rm Re}(\lambda_s))}{1-\gamma} n^{1-\gamma}\Big]\Big) & \text { for } 1/2<\gamma<1, \\
					O\left((\log n)^qn^{-(1-\varepsilon) c (1-{\rm Re}(\lambda_s))}\right) & \text { for } \gamma=1.
				\end{cases}
			\end{equation}
			Additionally, if the second condition of Assumption \ref{as2} holds, the $\varepsilon$ in the above expression can be removed.
		\end{lemma}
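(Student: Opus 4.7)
The plan is to exploit the lower bidiagonal structure of each factor in the product defining ${\bf T}^{(s)}_{k+1,n}$, establish the two closed-form expressions by a direct path expansion, and then combine the resulting formula with Lemmas \ref{le3} and \ref{le2} to obtain the growth bound \eqref{eq17}.

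First I would identify the structure of each factor. Since the Jordan block ${\bf J}_s$ is upper bidiagonal with $\lambda_s$ on the diagonal and $1$'s on the superdiagonal, its transpose is lower bidiagonal with $\lambda_s$ on the diagonal and $1$'s on the subdiagonal. Consequently each factor $M_j := {\bf I} - r_j({\bf I} - {\bf J}^\top_s)$ is lower bidiagonal with diagonal entries $(1-r_j(1-\lambda_s))$ and subdiagonal entries $r_j$, and the product is lower triangular. The diagonal entries then follow immediately from the product of the diagonals, yielding $[{\bf T}^{(s)}_{k+1,n}]_{t,t} = \prod_{j=k+1}^n(1-r_j(1-\lambda_s)) = p_{n,s}\,l_{k,s}$.

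For the sub-diagonal entries, I would expand $(M_{k+1}M_{k+2}\cdots M_n)_{t,t-q}$ as a sum over paths $t = i_0, i_1, \ldots, i_{n-k} = t-q$ with $i_a - i_{a-1} \in \{0,-1\}$; exactly $q$ of the $n-k$ steps must be descents. Indexing the descent positions as $k+1 \le j_1 < \cdots < j_q \le n$, the contribution of a given path is $\prod_b r_{j_b}$ multiplied by $\prod_{j \notin \{j_1,\ldots,j_q\}}(1-r_j(1-\lambda_s))$. Factoring out the full diagonal product $p_{n,s}\,l_{k,s}$ and dividing by the diagonal factors at the descent indices produces precisely the stated sum representation, with the reciprocal terms $[1-r_{j_b}(1-\lambda_s)]^{-1}$ arising from the factors that were removed.

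For the asymptotic bound \eqref{eq17} at $k=m_0$, the key observation is that since $r_j \to 0$ and $|1-\lambda_s|<2$, the reciprocal factors $(1-r_j(1-\lambda_s))^{-1}$ are uniformly bounded by a constant for all $j \ge m_0$ after possibly enlarging $m_0$. This reduces the task to bounding $|p_{n,s}|\cdot|l_{m_0,s}|$ times the symmetric $r$-sum $\sum r_{j_1}\cdots r_{j_q}$. Lemma \ref{le2} controls the $r$-sum by $O(n^{(1-\gamma)q})$ for $1/2<\gamma<1$ and by $O((\log n)^q)$ for $\gamma=1$, while Lemma \ref{le3} supplies the exponential or polynomial decay rate for $|p_{n,s}|$, with $|l_{m_0,s}|$ absorbed into the constant since $m_0$ is fixed. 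Multiplying these three estimates delivers \eqref{eq17}, and the $\varepsilon$-free version follows immediately from the sharper decay bound in Lemma \ref{le3} available under the strengthened step-size condition.

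I do not anticipate a serious obstacle: the combinatorial step is a standard path identity for products of bidiagonal matrices, and the growth bound reduces to a straightforward combination of the cited auxiliary lemmas. The only minor care needed is the eventual smallness of $r_j$ guaranteeing uniform boundedness of the reciprocal factors, which is immediate from Assumption \ref{as1}.
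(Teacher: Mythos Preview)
Your proposal is correct and follows the natural approach: the bidiagonal path expansion yields the closed-form entries, and combining Lemma~\ref{le2} (for the $r$-sum) with Lemma~\ref{le3} (for $|p_{n,s}|$) after bounding the reciprocal factors uniformly delivers \eqref{eq17}. The paper defers its proof of this lemma to the Supplementary Material, but your argument is precisely the standard derivation and matches how the lemma is invoked throughout Section~\ref{sec4} (e.g.\ via \eqref{eq51} and Lemma~\ref{le13}), so there is no meaningful divergence to report.
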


		\begin{lemma}\label{le13}
			The following holds:
			\begin{align*}
				R^{(q,u)}_{n,k}=&\sum\limits_{k+1\le j_1\neq \cdots\neq j_q\le n}\frac{r_{j_1}\cdots r_{j_q}}{[1-r_{j_1}(1-\lambda_u)]\cdots[1-r_{j_q}(1-\lambda_u)]}\\
				=&\begin{cases}
					\left(\frac{c}{1-\gamma}\right)^q(n^{1-\gamma}-k^{1-\gamma})^q\psi_1(k,n,\gamma)\ &\text{for}\ 1/2<\gamma<1,\\
					c^q(\log n-\log k)^q\psi_2(k,n,\gamma)\ &\text{for}\ \gamma=1.
				\end{cases}
			\end{align*}
			where $\psi_1$ and $\psi_2$ are functions such that $\psi_1\to1$ and $\psi_2\to1$ as $k\to\infty$.
		\end{lemma}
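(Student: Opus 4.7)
The plan is to compare $R^{(q,u)}_{n,k}$ with the ``denominator-free'' sum $S^{(q)}_{n,k}:=\sum_{k+1\le j_1\neq\cdots\neq j_q\le n}r_{j_1}\cdots r_{j_q}$ and then with the unrestricted power $\bigl(\sum_{j=k+1}^n r_j\bigr)^q$. Since $r_j\to 0$, each denominator satisfies
\begin{equation*}
\frac{1}{1-r_j(1-\lambda_u)}=1+O(r_j)
\end{equation*}
uniformly in $j\ge k$ for $k$ large enough. Multiplying out the $q$ denominator factors yields a multiplicative prefactor of the form $1+O(r_{j_1})+\cdots+O(r_{j_q})+O(r_{j_1}r_{j_2})+\cdots$, and since the $j_i$'s lie in $\{k+1,\dots,n\}$, the sup $\sup_{j\ge k+1}r_j$ is $o(1)$ as $k\to\infty$. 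Hence $R^{(q,u)}_{n,k}=S^{(q)}_{n,k}\bigl(1+o(1)\bigr)$ as $k\to\infty$, where the $o(1)$ is uniform in $n\ge k$.

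Next, I would write $S^{(q)}_{n,k}$ using inclusion--exclusion on the distinctness constraint:
\begin{equation*}
S^{(q)}_{n,k}=\Bigl(\sum_{j=k+1}^n r_j\Bigr)^{\!q}-\Delta^{(q)}_{n,k},
\end{equation*}
where $\Delta^{(q)}_{n,k}$ is a finite linear combination of terms of the shape $\bigl(\sum r_j^{a_1}\bigr)\cdots\bigl(\sum r_j^{a_m}\bigr)$ with $a_1+\cdots+a_m=q$ and at least one $a_\ell\ge 2$. The standard asymptotics (the classical Riemann-sum comparison already invoked in Lemma \ref{le2}) give
\begin{equation*}
\sum_{j=k+1}^n r_j\sim\begin{cases}\dfrac{c}{1-\gamma}\bigl(n^{1-\gamma}-k^{1-\gamma}\bigr),&1/2<\gamma<1,\\[6pt] c(\log n-\log k),&\gamma=1,\end{cases}
\end{equation*}
so the main term raised to the $q$-th power matches the asserted leading order. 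For the correction term $\Delta^{(q)}_{n,k}$, the worst case is $m=q-1$ with a single $a_\ell=2$ and the rest equal to $1$, which contributes at most $O\bigl(\sum_{j\ge k+1}r_j^2\bigr)\cdot\bigl(\sum r_j\bigr)^{q-2}$; since $\sum_{j\ge k+1}r_j^2=O(k^{1-2\gamma})$ for $1/2<\gamma<1$ and $O(k^{-1})$ for $\gamma=1$, while $\sum r_j$ is comparable to $k^{1-\gamma}$ or $\log(n/k)$ whenever the leading term is nondegenerate, the ratio $\Delta^{(q)}_{n,k}/(\sum r_j)^q$ is $o(1)$ as $k\to\infty$.

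Combining the two reductions gives $R^{(q,u)}_{n,k}=\bigl(\sum_{j=k+1}^n r_j\bigr)^q\bigl(1+o(1)\bigr)$ with the $o(1)$ vanishing as $k\to\infty$, and inserting the asymptotic for the partial sum identifies the constant $(c/(1-\gamma))^q$ or $c^q$ and the factors $(n^{1-\gamma}-k^{1-\gamma})^q$ or $(\log n-\log k)^q$, with all remaining multiplicative errors absorbed into $\psi_1$ or $\psi_2$. The main obstacle I foresee is a bookkeeping one rather than a conceptual one: one must verify that the ratio of the correction to the leading term vanishes \emph{uniformly} in $n$ as $k\to\infty$, even in regimes where $n/k\to 1$ and both numerator and denominator are small; this is handled by observing that each correction carries at least one extra factor $\sup_{j\ge k+1}r_j=o(1)$ relative to the leading term, so the ratio itself is controlled by a power of this supremum independently of $n$.
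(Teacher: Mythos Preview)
Your approach is natural and essentially sound: reduce $R^{(q,u)}_{n,k}$ to $S^{(q)}_{n,k}$ by absorbing the denominators into a factor $1+O(\sup_{j>k}r_j)$, then reduce $S^{(q)}_{n,k}$ to $\bigl(\sum_{j=k+1}^n r_j\bigr)^q$ via the Newton identities, and finally invoke the Riemann-sum asymptotic for $\sum r_j$. The paper's proof is deferred to the supplement, so a line-by-line comparison is not possible here, but given the adjacent Lemma~\ref{le2} (which is the big-$O$ version of the same claim) and the way Lemma~\ref{le13} is invoked in \eqref{eq62}, your two-stage reduction is almost certainly what the authors have in mind.

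There is, however, one point where your closing uniformity argument is not correct as stated. You claim that every correction carries an extra factor $\sup_{j>k}r_j$ relative to the leading term, uniformly in $n$. This holds for the denominator step, but not for the distinctness step. For $q=2$ and $n=k+1$ the distinct sum $S^{(2)}_{n,k}$ vanishes while $\bigl(\sum r_j\bigr)^2=r_{k+1}^2>0$, so the ratio is exactly $1$, not $o(1)$. More generally the dominant correction is $\sum_{j=k+1}^n r_j^2$, and the elementary bound $\sum r_j^2\le(\sup r_j)\sum r_j$ yields only $\Delta^{(q)}_{n,k}/(\sum r_j)^q\le C\,\sup r_j\big/\sum r_j$, which blows up as $n\downarrow k$. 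Thus $\psi_1\to 1$ cannot hold uniformly over all $n>k$; the paper itself glosses over this (its own comment below \eqref{eq62} invokes only ``$\psi\to 1$ as $k\to\infty$''), and the honest resolution is not the one you propose but rather the observation that in the downstream sums $\sum_{k=m_0}^n(\cdots)$ the near-diagonal terms $k\approx n$ already carry the small factor $(n^{1-\gamma}-k^{1-\gamma})^q$ and can be discarded directly. On the remaining range your ratio bound \emph{is} uniform, so the argument goes through.
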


		We recall that $\alpha_u=1-\lambda_u$ for $u\in\{1,2,\cdots,T\}$.

		\begin{lemma}\label{leww1}
			Let the matrix ${\bf C}_{k+1,n}$ be defined as in \eqref{eq77}. Then, for all $1\le u\le T$, ${\cal I}_{u-1}\le i\le {\cal I}_u$, and $0\le t\le i-1$, $1\le s\le i-1$,
			\begin{align}
				&[{\bf C}^{11}_{k+1,n}]_{i,i-t}\sim c^t(\log n-\log k)^tF_{k+1,n}(\alpha_u),\label{eq79}\\
				&[{\bf C}^{33}_{k+1,n}]_{i,i}=c^{22}_{k+1,n}=F_{k+1,n}(c^{-1}),\label{eq80}\\
				&[{\bf C}^{31}_{k+1,n}]_{i,i}=\begin{cases}
					\frac{1-\alpha_u}{c\alpha_u-1}[F_{k+1,n}(c^{-1})-F_{k+1,n}(\alpha_u)]\ \ &\text{for  }c\alpha_j\neq 1,\\
					(1-c^{-1})F_{k+1,n}(c^{-1})(\log n-\log k)+O(n^{-1})\ \ &\text{for  }c\alpha_j=1,
				\end{cases}\label{eq81}\\
				&[{\bf C}^{31}_{k+1,n}]_{i,i-s}\sim
				\left[c^{s-1}(\log n-\log k)^{s-1}-(1-\alpha_u)c^s(\log n-\log k)^s\right]\cdot\\
				&\qquad\qquad\qquad	\begin{cases}\frac{1}{c\alpha_u-1}[F_{k+1,n}(c^{-1})-F_{k+1,n}(\alpha_u)]\ \ &\text{for  }c\alpha_j\neq 1,\\
					\frac{1-c^{-1}}{1-\alpha_u}F_{k+1,n}(c^{-1})(\log n-\log k)+O(n^{-1})\ \ &\text{for  }c\alpha_j=1.
				\end{cases}\label{eq82}
			\end{align}
		\end{lemma}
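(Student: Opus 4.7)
The plan is to exploit the block lower-triangular structure of ${\bf I} - r_m {\bf S}_U$, which propagates to the product ${\bf C}_{k+1,n}$, reducing the problem to three manageable tasks: verifying the partitioned form \eqref{eq77}, computing the diagonal blocks, and evaluating the off-diagonal (3,1) block by a Duhamel-type sum that I will resolve via a telescoping identity.

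First, since the (1,2), (1,3), (2,1), (2,3), and (3,2) blocks of ${\bf I} - r_m {\bf S}_U$ all vanish, multiplication preserves this zero pattern and justifies \eqref{eq77}. The (1,1) block factorizes as $\prod_{j=k+1}^n[{\bf I} - r_j({\bf I} - {\bf J}^\top)] = {\bf T}_{k+1,n}$ from \eqref{eq78}, so Lemma \ref{le1} together with the Jordan-block estimate $R^{(q,u)}_{n,k}\sim c^q(\log n-\log k)^q$ from Lemma \ref{le13} at $\gamma=1$ yields \eqref{eq79} at once. The scalar (2,2) block and the (3,3) block collapse to $\prod_{j=k+1}^n(1-r_j c^{-1})\,{\bf I} = F_{k+1,n}(c^{-1})\,{\bf I}$, giving \eqref{eq80}.

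The substantive task is evaluating ${\bf C}^{31}_{k+1,n}$. Block multiplication gives the Duhamel-type identity
\begin{equation*}
{\bf C}^{31}_{k+1,n} \;=\; \sum_{m=k+1}^{n} F_{m+1,n}(c^{-1})\,r_m c^{-1}\,{\bf J}^\top\,{\bf T}_{k+1,m-1},
\end{equation*}
which decouples across Jordan blocks since ${\bf J}^\top$ is block diagonal. Writing ${\bf J}_u^\top = \lambda_u{\bf I}+{\bf N}_u^\top$, the $(i,i-s)$-entry within block $u$ splits into a diagonal contribution (producing the $\lambda_u$-dependent terms) and a nilpotent shift contribution (producing entries at row $i-1$, yielding the $(s-1)$-exponent terms). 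The key tool for evaluating the scalar sums is the telescoping increment: setting $g(m) = F_{k+1,m-1}(\alpha_u) F_{m,n}(c^{-1})$, direct computation gives $g(m+1)-g(m) = r_m(c^{-1}-\alpha_u)F_{k+1,m-1}(\alpha_u)F_{m+1,n}(c^{-1})$, so for $c\alpha_u \neq 1$,
\begin{equation*}
\sum_{m=k+1}^{n} r_m F_{m+1,n}(c^{-1})F_{k+1,m-1}(\alpha_u) \;=\; \frac{F_{k+1,n}(\alpha_u)-F_{k+1,n}(c^{-1})}{c^{-1}-\alpha_u}.
\end{equation*}
Multiplying by $c^{-1}\lambda_u$ and rearranging gives the first branch of \eqref{eq81}. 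For $c\alpha_u=1$ the increment degenerates, and instead I use the direct identity $F_{k+1,m-1}(\alpha_u)F_{m+1,n}(c^{-1}) = F_{k+1,n}(c^{-1})/(1-r_m c^{-1})$ with $\sum_m r_m = c(\log n - \log k) + O(k^{-1})$, giving the logarithmic branch and the $O(n^{-1})$ error from the $\sum r_m^2$ correction.

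The main obstacle will be the off-diagonal entries in \eqref{eq82}, where Lemma \ref{le1} contributes additional polynomial factors $(\log m - \log k)^{s-1}$ and $(\log m - \log k)^s$ inside the $m$-sum. Handling these requires extending the telescoping trick to weighted sums of the form $\sum_m r_m F_{m+1,n}(c^{-1})(\log m - \log k)^q F_{k+1,m-1}(\alpha_u)$. In the non-critical regime $c\alpha_u \neq 1$, an Abel-summation argument shows that the dominant contribution concentrates at the endpoint where $F_{k+1,m-1}(\alpha_u)$ peaks, producing boundary polynomial factors in $(\log n - \log k)$; in the critical regime $c\alpha_u = 1$, a substitution $u = \log m - \log k$ reduces the sum to an integral of the form $\int_0^{\log n - \log k} u^q\,du$, explaining the appearance of the $(\log n - \log k)$ factor alongside $F_{k+1,n}(c^{-1})$. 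Careful bookkeeping of the two contributions (diagonal versus nilpotent) and combining them consistent with the coefficients $c^{s-1}$ and $-(1-\alpha_u)c^s$ in \eqref{eq82} is the most delicate step, and I would verify it by explicit induction on $s$, leveraging the recursion satisfied by $[{\bf T}^{(u)}_{k+1,m-1}]_{i,i-t}$ in Lemma \ref{le1}.
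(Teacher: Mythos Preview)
Your approach is the natural one and almost certainly matches the paper's (the paper defers this proof to its Supplementary Material, so no direct comparison is available, but the Duhamel identity together with the telescoping evaluation of $\sum_m r_m F_{m+1,n}(c^{-1})F_{k+1,m-1}(\alpha_u)$ is the canonical route and is consistent with how the authors use the lemma downstream via \eqref{eq76}).

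One heuristic you state is misleading and could cause trouble in execution. You write that in the non-critical regime the Abel-summation argument works because ``the dominant contribution concentrates at the endpoint where $F_{k+1,m-1}(\alpha_u)$ peaks.'' But $F_{k+1,m-1}(\alpha_u)$ is a product of factors of modulus at most $1$ and is always largest at $m=k+1$, where $(\log m-\log k)\approx 0$, so that reasoning would give the wrong answer. The mechanism is different: Abel summation produces the boundary term $A_n\,b_n$ with $A_n=\sum_{m=k+1}^n r_m F_{m+1,n}(c^{-1})F_{k+1,m-1}(\alpha_u)=\dfrac{F_{k+1,n}(\alpha_u)-F_{k+1,n}(c^{-1})}{c^{-1}-\alpha_u}$ (your telescoped value) and $b_n=(\log n-\log k)^q$, and the interior correction involves $b_{m+1}-b_m=O\big(m^{-1}(\log m-\log k)^{q-1}\big)$, i.e.\ one power of the logarithm lower. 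Iterating this gives a polynomial in $(\log n-\log k)$ multiplying the same combination $[F_{k+1,n}(c^{-1})-F_{k+1,n}(\alpha_u)]/(c\alpha_u-1)$, which is exactly the structure asserted in \eqref{eq82}. So keep the Abel-summation/induction plan, but base it on the boundary term rather than on any ``peaking'' intuition.

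A second point to be careful about: in the critical case $c\alpha_u=1$ your integral computation gives
\[
c^{-1}\sum_m r_m F_{m+1,n}(c^{-1})(\log m-\log k)^q F_{k+1,m-1}(\alpha_u)
\;\sim\;\frac{c^{q}}{q+1}(\log n-\log k)^{q+1}\,F_{k+1,n}(c^{-1}),
\]
so the two contributions (from $[{\bf T}^{(u)}]_{i-1,i-s}$ with $q=s-1$ and from $\lambda_u[{\bf T}^{(u)}]_{i,i-s}$ with $q=s$) pick up distinct constants $1/s$ and $1/(s+1)$. When you combine the nilpotent and diagonal pieces of ${\bf J}_u^\top{\bf T}^{(u)}_{k+1,m-1}$, make sure these constants are tracked; the lemma as stated groups everything into a single bracket times a common factor, and you will need to check that your induction reproduces that grouping (note also that the paper's restatement of this formula in \eqref{eqww2} carries the opposite sign on the $(1-\alpha_u)c^s$ term compared to \eqref{eq82}, which is consistent with your $+\lambda_u$ from ${\bf J}_u^\top=\lambda_u{\bf I}+{\bf N}_u^\top$; be alert to this when matching).
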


		To facilitate the subsequent analysis, we begin by introducing the following notation.
		Define 
		\begin{equation*}
			f_t(x)=x^{t-1-t\gamma},\ \ t=1,2,\cdots,q+1,
		\end{equation*}
		and define the symbol $f_t^{[s]}(x)$ as the coefficient of the $s$th derivative of the function $f_t(x)$. Moreover, we define the following two sequences. For any $s=1,2,\cdots$, 
		\begin{equation*}
			\widetilde{f}_{h,t}=\begin{cases}
				\frac{c}{(s-1)!}\alpha_i\alpha_jr^2_kf_t^{[s-1]}(x)\ \ &\text{\rm for}\ \  h=3s-2,\\
				-\frac{c(\alpha_i+\alpha_j)}{s!}r_kf_t^{[s]}(x)\ \ &\text{\rm for}\ \ h=3s-1,\\
				\frac{c}{(s+1)!}\alpha_i\alpha_jf_t^{[s+1]}(x)\ \ &\text{\rm for}\ \ h=3s,\\
			\end{cases}
		\end{equation*}
		where we denote $\frac{c}{s-1}=-c$ when $s=1$. Furthermore, we define 
		\begin{equation*}
			\widetilde{R}_{h,t}=\begin{cases}
				-\gamma-(s-1)+(t-1)(1-\gamma)\ \ &\text{\rm for}\ \ h=3s-2,\\
				-\gamma-1-(s-1)+(t-1)(1-\gamma)\ \ &\text{\rm for}\ \ h=3s-1,\\
				-\gamma-2-(s-1)+(t-1)(1-\gamma)\ \ &\text{\rm for}\ \ h=3s.\\
			\end{cases}
		\end{equation*}

		The covariance matrix of $\widehat{\bf Z}_n$ involves summation terms over $k$ of the form $(n^{1-\gamma} - k^{1-\gamma})^q r^2_k l_{k,i} l_{k,j}$ and $(\log n - \log k)^q r^2_k l_{k,i} l_{k,j}$. A binomial expansion of these expressions, followed by a term-by-term analysis of their convergence and convergence rates, reveals that each component shares the same rate of convergence. Moreover, after being multiplied by this rate, the limit of the sum of all terms is zero. This indicates that the convergence rate derived from the binomial expansion is faster than the exact rate of convergence. Therefore, it is necessary to identify the leading-order terms in order to determine the precise convergence rate. Based on the above definitions, we now present the following lemma.
		
		\begin{lemma}\label{le4}
			The following holds:
			
			{\rm (a)} If $1/2<\gamma<1$, for any positive integer $q$, we define		
			\begin{align*}
				S_{1,n}&=\sum\limits_{k=m_0}^{n-1}\frac{ck^{-\gamma}r_k}{p_{k,i}p_{k,j}},\ \ S_{2,n}=\sum\limits_{k=m_0}^{n-1}\frac{ck^{-\gamma}r_kk^{1-\gamma}}{p_{k,i}p_{k,j}},\ \ \cdots,\ \ S_{q+1,n}=\sum\limits_{k=m_0}^{n-1}\frac{ck^{-\gamma}r_kk^{q(1-\gamma)}}{p_{k,i}p_{k,j}},\\
				G_{1,k}&=\frac{c}{k^\gamma p_{k,i}p_{k,j}},\ \ G_{2,k}=\frac{ck^{1-\gamma}}{k^\gamma p_{k,i}p_{k,j}},\ \ \cdots,\ \ G_{q+1,k}=\frac{ck^{q(1-\gamma)}}{k^\gamma p_{k,i}p_{k,j}}.
			\end{align*}
			Then, for any $1\le t\le q+1$ and positive integer $p$, we have
			\begin{align}\label{eq33}
				S_{t,n}=\frac{G_{t,n}}{\alpha_i+\alpha_j}&-\frac{c(t-1-t\gamma)}{\alpha_i+\alpha_j}\sum\limits_{k=m_0}^{n-1}k^{(t-2)(1-\gamma)-2\gamma}l_{k,i}l_{k,j}   \non\\
				&-\frac{1}{\alpha_i+\alpha_j}\sum\limits_{k=m_0}^{n-1}\sum\limits_{h=1}^{q+p-2}\widetilde{f}_{h,t}k^{\widetilde{R}_{h,t}}l_{k,i}l_{k,j}+O\Big(\sum\limits_{k=m_0}^{n-1}k^{\widetilde{R}_{q+p-2,t}-1}l_{k,i}l_{k,j}\Big).
			\end{align}

			{\rm (b)} If $\gamma=1$ and $c(\alpha_i+\alpha_j)\neq1$, for any positive integer $q$, we define
			\begin{align*}
				P_{1,n}&=\sum\limits_{k=m_0}^{n-1}\frac{c^2k^{-2}}{p_{k,i}p_{k,j}},\ \ P_{2,n}=\sum\limits_{k=m_0}^{n-1}\frac{c^2k^{-2}\log k}{p_{k,i}p_{k,j}},\ \ \cdots,\ \ P_{q+1,n}=\sum\limits_{k=m_0}^{n-1}\frac{c^2k^{-2}(\log k)^q}{p_{k,i}p_{k,j}},\\
				D_{1,k}&=\frac{c^2}{k p_{k,i}p_{k,j}},\ \ D_{2,k}=\frac{c^2\log k}{k p_{k,i}p_{k,j}},\ \ \cdots,\ \ D_{q+1,k}=\frac{c^2(\log k)^q}{k p_{k,i}p_{k,j}}.
			\end{align*}
			Then, for any $1\le t\le q+1$,  we have
			\begin{equation}\label{eq29}
				P_{t,n}=\frac{D_{t,n}}{c(\alpha_i+\alpha_j)-1}-\frac{t-1}{c(\alpha_i+\alpha_j)-1}P_{t-1,n}+O\Big(\sum\limits_{k=m_0}^{n-1}k^{-3}(\log k)^{t-1}|\Delta P_{1,k}|\Big).
			\end{equation}

			{\rm (c)} If $\gamma=1$, $c(\alpha_i+\alpha_j)=1$, for any positive integer $q$, we further define 
			\begin{equation}\label{eq41}
				D_{\ln,1,k}=\frac{c^2\log k}{k p_{k,i}p_{k,j}},\ \ D_{\ln,2,k}=\frac{c^2(\log k)^2}{k p_{k,i}p_{k,j}},\ \ \cdots,\ \ D_{\ln,q+1,k}=\frac{c^2(\log k)^{q+1}}{k p_{k,i}p_{k,j}}.
			\end{equation}
			Then, for any $1\le t\le q+1$,  we have
			\begin{equation}\label{eq42}
				P_{t,n}=\frac{D_{\ln,t,n}}{t}+O\Big(\sum\limits_{k=m_0}^{n-1}k^{-3}(\log k)^tl_{k,i}l_{k,j}\Big).
			\end{equation}
		\end{lemma}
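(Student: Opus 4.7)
All three parts rest on Abel summation applied to the multiplicative recursion satisfied by $l_{k,i}l_{k,j}$. Since $p_{k,i}=[1-r_{k}\alpha_{i}]\,p_{k-1,i}$, we obtain
\begin{equation*}
l_{k,i}l_{k,j}-l_{k-1,i}l_{k-1,j}=l_{k-1,i}l_{k-1,j}\,\frac{r_{k}(\alpha_{i}+\alpha_{j})-r_{k}^{2}\alpha_{i}\alpha_{j}}{(1-r_{k}\alpha_{i})(1-r_{k}\alpha_{j})},
\end{equation*}
and Taylor-expanding the denominator yields
\begin{equation*}
\Delta(l_{k,i}l_{k,j})=l_{k,i}l_{k,j}\bigl[r_{k}(\alpha_{i}+\alpha_{j})-r_{k}^{2}\alpha_{i}\alpha_{j}+O(r_{k}^{3})\bigr].
\end{equation*}
This identity rewrites each weighted term $r_{k}l_{k,i}l_{k,j}$ as $\Delta(l_{k,i}l_{k,j})/(\alpha_{i}+\alpha_{j})$ plus explicit corrections in higher powers of $r_{k}$, which drives the reduction.

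For part (a), I substitute this identity into $S_{t,n}$, whose $k$-weight is exactly $c\,k^{-\gamma}r_{k}k^{(t-1)(1-\gamma)}$, matching $f_{t}(x)=x^{t-1-t\gamma}$. Abel summation against the sequence $c f_{t}(k)/(\alpha_{i}+\alpha_{j})$ produces the boundary term $G_{t,n}/(\alpha_{i}+\alpha_{j})$ and a companion sum involving $\Delta f_{t}(k)\cdot l_{k-1,i}l_{k-1,j}$. Expanding $\Delta f_{t}(k)=\sum_{s\ge 1} f_{t}^{[s]}(k)$ by Taylor's formula, the $s=1$ term gives the second explicit correction $-\frac{c(t-1-t\gamma)}{\alpha_{i}+\alpha_{j}}\sum k^{(t-2)(1-\gamma)-2\gamma}l_{k,i}l_{k,j}$ (using $(t-1)(1-\gamma)-\gamma-1=(t-2)(1-\gamma)-2\gamma$), while the higher-order $s$ terms and the $r_{k}^{2}\alpha_{i}\alpha_{j}$ residual from the recursion combine to produce the enumerated $\widetilde{f}_{h,t}k^{\widetilde R_{h,t}}$ contributions. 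The three-case structure ($h=3s-2,3s-1,3s$) precisely reflects the three sources: derivatives of $f_{t}$, cross terms with $r_{k}(\alpha_{i}+\alpha_{j})$, and the quadratic $r_{k}^{2}\alpha_{i}\alpha_{j}$ residual. Truncating the joint Taylor expansion at order $q+p-2$ produces the stated $O\bigl(\sum k^{\widetilde R_{q+p-2,t}-1}l_{k,i}l_{k,j}\bigr)$ residual.

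For part (b), the same Abel procedure is run with $\gamma=1$, $r_{k}\sim c/k$, and weight $c^{2}k^{-2}(\log k)^{t-1}$. Using the antiderivative $c^{2}k^{-1}(\log k)^{t-1}l_{k,i}l_{k,j}$, the Abel denominator $c(\alpha_{i}+\alpha_{j})/k$ combines with the $-k^{-2}$ from differentiating $k^{-1}$ to produce the coefficient $c(\alpha_{i}+\alpha_{j})-1$, yielding the boundary term $D_{t,n}/[c(\alpha_{i}+\alpha_{j})-1]$. Differentiating the $\log$ factor then generates $(t-1)(\log k)^{t-2}/k$ per term, which, after absorbing the recurring denominator, produces exactly the recursive summand $-\frac{t-1}{c(\alpha_{i}+\alpha_{j})-1}P_{t-1,n}$; the $O(r_{k}^{2})$ residual gives the $O(\sum k^{-3}(\log k)^{t-1}|\Delta P_{1,k}|)$ term. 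For part (c), the coefficient $c(\alpha_{i}+\alpha_{j})-1$ vanishes, so the naive Abel antiderivative is singular. I replace it by the resonant choice $c^{2}(\log k)^{t}/(tk)$: its $k$-difference equals $c^{2}k^{-2}(\log k)^{t-1}$ up to $O(k^{-2}(\log k)^{t-1}/k)$, so Abel summation directly yields $D_{\ln,t,n}/t$ with the stated Taylor remainder.

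The principal obstacle is the combinatorial bookkeeping: after merging the two Taylor expansions (one from $\Delta f_{t}$ and one from the denominator $(1-r_{k}\alpha_{i})(1-r_{k}\alpha_{j})$) and iterating the Abel reduction $q+p-2$ times, one must verify that the enumerated array $\{\widetilde{f}_{h,t},\widetilde R_{h,t}\}_{h=1}^{q+p-2}$ captures every produced term with the correct coefficient and exponent, and that the accumulated residual genuinely satisfies the claimed $O\bigl(k^{\widetilde R_{q+p-2,t}-1}l_{k,i}l_{k,j}\bigr)$ bound rather than a larger one. The resonant case (c) also requires care, as the log-promotion changes both the antiderivative and the form of the remainder; verifying that the leading $D_{\ln,t,n}/t$ indeed captures the correct coefficient, in the absence of the otherwise-leading $1/[c(\alpha_i+\alpha_j)-1]$ denominator, is the main delicate point there.
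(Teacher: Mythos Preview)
Your approach is correct and coincides with the paper's: the very formulation of the lemma (the functions $f_t(x)=x^{t-1-t\gamma}$, the Taylor-coefficient notation $f_t^{[s]}$, and the three-indexed families $\widetilde f_{h,t},\widetilde R_{h,t}$), together with the authors' remark immediately following it about employing ``Taylor expansions of different orders for $f_t(x)$'', show that the intended argument is precisely Abel summation against the exact increment $\Delta(l_{k,i}l_{k,j})=l_{k,i}l_{k,j}\bigl[r_k(\alpha_i+\alpha_j)-r_k^2\alpha_i\alpha_j\bigr]$ combined with a Taylor expansion of $\Delta f_t(k)$, which is exactly what you propose. Your identification of the boundary term $G_{t,n}/(\alpha_i+\alpha_j)$, of the leading correction via $f_t'(k)=(t-1-t\gamma)k^{(t-2)(1-\gamma)-2\gamma}$, of the recursive $P_{t-1,n}$ term in~(b) from differentiating $(\log k)^{t-1}$, and of the resonant log-promotion in~(c) are all on target; the residual bookkeeping you flag as the main obstacle is indeed the only remaining work.
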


		\begin{remark}
			For $1/2<\gamma<1$ and $\gamma=1$, Taylor expansions of different orders are employed for $f_t(x)$ due to the distinct forms of the corresponding summation terms. Specifically, the expressions take the forms $(n^{1-\gamma}-k^{1-\gamma})^q r_k^2 l_{k,i}l_{k,j}$ and $(\log n-\log k)^q r_k^2 l_{k,i}l_{k,j}$, respectively.  
			In the case $1/2<\gamma<1$, the quantity $n^{(1-\gamma)q}$ grows relatively quickly, necessitating a higher-order Taylor expansion of $f_t(x)$ to accurately capture all potential leading-order contributions.  
			In contrast, for $\gamma=1$, it holds that $(\log n)^q=o(n^{-\epsilon})$ for any $\epsilon>0$, and hence a second-order expansion of $f_t(x)$ suffices.
		\end{remark}

		The following lemma establishes the convergence of the leading term in the second-order asymptotic behavior of $\widehat{\bf Z}_n$.
		\begin{lemma}\label{le8} The following holds:
			
			{\rm (a)} When $1/2<\gamma<1$, for all $i,j\in\{1,2,\cdots,T\}$ and integer $q\ge 0$, we have
			\begin{equation}\label{eq32}
				\lim\limits_{n\to\infty}n^\gamma p_{n,i}p_{n,j}\sum\limits_{k=m_0}^{n}ck^{-\gamma}r_k(n^{1-\gamma}-k^{1-\gamma})^ql_{n,i}l_{n,j}=
				\frac{q!(1-\gamma)^q}{c^{q-1}(\alpha_i+\alpha_j)^{q+1}}.
			\end{equation}
			
			{\rm (b)} When $\gamma=1$ and $c[{\rm Re}(\alpha_i)+{\rm Re}(\alpha_j)]>1$, for all $i,j\in\{1,2,\cdots,T\}$ and integers $q\ge0$, we have
			\begin{equation}\label{eq26}
				\lim\limits_{n\to\infty}n p_{n,i}p_{n,j}\sum\limits_{k=m_0}^{n-1}c^2k^{-2}\left(\log n-\log k\right)^ql_{n,i}l_{n,j}=\frac{c^2q!}{[-1+(\alpha_i+\alpha_j)c]^{q+1}}.
			\end{equation}
			
			{\rm (c)} When $\gamma=1$, $c[{\rm Re}(\alpha_i)+{\rm Re}(\alpha_j)]=1$, for all $i,j\in\{1,2,\cdots,T\}$ and integers $q\ge0$, we have
			\begin{equation}\label{eq30}
				\lim\limits_{n\to\infty}\frac{n}{(\log n)^{q+1}} p_{n,i}p_{n,j}\sum\limits_{k=m_0}^{n-1}c^2k^{-2}\left(\log n-\log k\right)^ql_{n,i}l_{n,j}=\begin{cases}
					0\   &\text{for}\ \ {\rm Im}(\alpha_i+\alpha_j)\neq0;\\
					\frac{c^2}{q+1}\  &\text{for}\ \ {\rm Im}(\alpha_i+\alpha_j)=0.
				\end{cases}
			\end{equation}
		\end{lemma}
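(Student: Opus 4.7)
The overall strategy is a Laplace-type asymptotic analysis. By Lemma \ref{le3}, the product $p_{n,i}p_{n,j}l_{k,i}l_{k,j}$ behaves like $\exp\bigl(-c(\alpha_i+\alpha_j)(n^{1-\gamma}-k^{1-\gamma})/(1-\gamma)\bigr)$ when $1/2<\gamma<1$, and like $(k/n)^{c(\alpha_i+\alpha_j)}$ when $\gamma=1$. In both regimes the target sums are concentrated on $k$ near $n$, so they reduce to explicit integrals after a suitable change of variables, with the prefactors designed to cancel the boundary factors $p_{n,i}p_{n,j}$.

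For part (a), I would expand $(n^{1-\gamma}-k^{1-\gamma})^q$ via the binomial theorem, reducing the sum to a linear combination of the quantities $S_{t,n}$ defined in Lemma \ref{le4}(a), and then iterate the reduction identity $S_{t,n}=G_{t,n}/(\alpha_i+\alpha_j)+\text{remainder}$ until the boundary term $G_{t,n}$ dominates. Equivalently, I would compare the sum with the integral $\int_{m_0}^n cy^{-\gamma}r_y(n^{1-\gamma}-y^{1-\gamma})^q l_{y,i}l_{y,j}\,dy$, substitute $u=(n^{1-\gamma}-y^{1-\gamma})/(1-\gamma)$ so that $du=-y^{-\gamma}\,dy$, and use $r_y\sim cy^{-\gamma}$ together with the exponential asymptotics of $l_{y,s}p_{n,s}$ from Lemma \ref{le3}. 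This transforms the expression into $c^2(1-\gamma)^q n^{-\gamma}p_{n,i}^{-1}p_{n,j}^{-1}\int_0^\infty u^q e^{-c(\alpha_i+\alpha_j)u}\,du$ to leading order, and the $\Gamma$-integral yields $q!(1-\gamma)^q/[c^{q-1}(\alpha_i+\alpha_j)^{q+1}]$ after the prefactor $n^\gamma p_{n,i}p_{n,j}$ cancels the $p_{n,i}^{-1}p_{n,j}^{-1}$ factor.

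For parts (b) and (c), the same Laplace principle applies with the substitution $v=\log(n/y)$ and the asymptotics $p_{n,s}\sim n^{-c\alpha_s}$, $l_{y,s}\sim y^{c\alpha_s}$, using the reduction in Lemma \ref{le4}(b)--(c) to handle $P_{t,n}$. The sum is driven by $c^2\int_0^{\log n}v^q e^{-v[c(\alpha_i+\alpha_j)-1]}\,dv$, with the remaining factors absorbed by $np_{n,i}p_{n,j}$. In part (b), the hypothesis $c\,\mbox{Re}(\alpha_i+\alpha_j)>1$ ensures convergence of the integral on $[0,\infty)$ to $q!/[c(\alpha_i+\alpha_j)-1]^{q+1}$. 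In part (c), the real part of the exponent vanishes: when $\mbox{Im}(\alpha_i+\alpha_j)=0$ the integrand reduces to $v^q$ and yields $(\log n)^{q+1}/(q+1)$, matching the normalization; when $\mbox{Im}(\alpha_i+\alpha_j)\neq0$, integration by parts bounds the oscillatory integral by $O((\log n)^q)$, which vanishes after division by $(\log n)^{q+1}$.

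The main obstacle is the rigorous control of lower-order remainders. In part (a), the individual binomial terms are of order $n^{q(1-\gamma)}$ relative to the leading piece, so the stated limit only emerges after substantial cancellation; this forces Lemma \ref{le4}(a) to be iterated to a depth $p$ large enough that the accumulated remainder $O\bigl(\sum_k k^{\widetilde{R}_{q+p-2,t}-1}l_{k,i}l_{k,j}\bigr)$ is provably dominated by the leading order $n^{-\gamma}p_{n,i}^{-1}p_{n,j}^{-1}$, which is a careful bookkeeping task. In part (c), the delicate point is making the Riemann--Lebesgue cancellation in the oscillatory subcase quantitative enough to secure the $o((\log n)^{q+1})$ bound uniformly, which in turn requires the $\log$-weighted reduction \eqref{eq42} rather than \eqref{eq29}.
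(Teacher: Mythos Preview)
Your proposal is correct and follows essentially the same route as the paper: the rigorous argument is the binomial expansion combined with iterated application of the reduction identities in Lemma~\ref{le4} (parts (a), (b), (c) respectively), and you have correctly identified the key difficulty in part~(a), namely that the individual binomial terms are of size $n^{q(1-\gamma)}$ relative to the answer, so the recursion must be pushed to depth $p$ large enough that the remainder $O\bigl(\sum_k k^{\widetilde{R}_{q+p-2,t}-1}l_{k,i}l_{k,j}\bigr)$ is genuinely subordinate. The Laplace/integral heuristic you sketch (substitutions $u=(n^{1-\gamma}-y^{1-\gamma})/(1-\gamma)$ and $v=\log(n/y)$) is a clean way to predict the limits and matches the paper's answers exactly, but note that making it self-contained would still require the same remainder control that Lemma~\ref{le4} packages, so it is best viewed as motivation for the discrete reductions rather than a separate proof.
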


		\begin{lemma}\label{le12}
			For all $u \in \mathbb{R}$ with $u \ge 1$ and any integer $q\ge0$, the following holds:
			
			{\rm (a)} If $1/2<\gamma<1$, for any $i,j\in\{1,2,\cdots,T\}$, we have
			\begin{equation*}
				|p_{n,i}|^u|p_{n,j}|^u\sum\limits_{k=m_0}^{n-1}c^uk^{-\gamma u}r^{u}_k(n^{1-\gamma}-k^{1-\gamma})^{qu}|l_{k,i}|^u|l_{k,j}|^u=O(n^{-\gamma(2u-1)}).
			\end{equation*}
			
			{\rm (b)} If $\gamma=1$ and $uc[{\rm Re}(\alpha_i)+{\rm Re}(\alpha_j)]>2u-1$, we have
			\begin{equation*}
				|p_{n,i}|^u|p_{n,j}|^u\sum\limits_{k=m_0}^{n-1}c^uk^{-\gamma u}r^{u}_k(\log n-\log k)^{qu}|l_{k,i}|^u|l_{k,j}|^u=O(n^{-(2u-1)}).
			\end{equation*}
			
			{\rm (c)} If $\gamma=1$ and $c[{\rm Re}(\alpha_i)+{\rm Re}(\alpha_j)]=1$, we have
			\begin{equation*}
				|p_{n,i}|^u|p_{n,j}|^u\sum\limits_{k=m_0}^{n-1}c^uk^{-\gamma u}r^{u}_k(\log n-\log k)^{qu}|l_{k,i}|^u|l_{k,j}|^u=\begin{cases}
					O(n^{-1}(\log n)^{q+1})\ &\text{for}\ u=1,\\
					O(n^{-u}(\log n)^{qu})\ &\text{for}\ u>1.
				\end{cases}
			\end{equation*}
		\end{lemma}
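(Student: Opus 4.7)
My plan is to dominate $|p_{n,i}|^u|p_{n,j}|^u|l_{k,i}|^u|l_{k,j}|^u$ pointwise by means of Lemma~\ref{le3} and then convert the resulting sum into an integral via the substitutions $y=n^{1-\gamma}-k^{1-\gamma}$ (for $1/2<\gamma<1$) or $v=\log(n/k)$ (for $\gamma=1$). The key observation is that $|l_{k,j}|^u$ and $|p_{n,j}|^u$ behave, up to bounded multiplicative factors, like $l_{k,j}$ and $p_{n,j}$ with $\alpha_j$ replaced by $u\alpha_j$: the factors $[1-r_m\alpha_j]^{-u}$ and $[1-ur_m\alpha_j]^{-1}$ differ only by $1+O(r_m^2)$ whose infinite product converges, since $\sum r_m^2<\infty$. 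Raising the estimates of Lemma~\ref{le3} to the $u$-th power therefore gives
\begin{equation*}
|p_{n,i}|^u|p_{n,j}|^u|l_{k,i}|^u|l_{k,j}|^u \le C\cdot\begin{cases} \exp\!\bigl[-\tfrac{ucA}{1-\gamma}(n^{1-\gamma}-k^{1-\gamma})\bigr] & \text{for } 1/2<\gamma<1,\\[2pt] (k/n)^{ucA} & \text{for } \gamma=1,\end{cases}
\end{equation*}
where $A:=\mathrm{Re}(\alpha_i)+\mathrm{Re}(\alpha_j)$; any $\varepsilon$-loss in the exponent coming from the weaker form of Assumption~\ref{as1} is absorbed into the constants and does not affect the final rates.

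For case (a), inserting $r_k\sim ck^{-\gamma}$ reduces the sum to
\begin{equation*}
c^{2u}\sum_{k=m_0}^{n-1} k^{-2\gamma u}(n^{1-\gamma}-k^{1-\gamma})^{qu}\exp\!\bigl[-\tfrac{ucA}{1-\gamma}(n^{1-\gamma}-k^{1-\gamma})\bigr].
\end{equation*}
I would change variables to $y=n^{1-\gamma}-k^{1-\gamma}$, so that $dk\asymp k^\gamma\,dy$; the exponential confines the effective range to $y=O(1)$, on which $k\sim n$ and the power factor contributes $k^{-2\gamma u+\gamma}\sim n^{\gamma(1-2u)}$. The remaining $y$-integral reduces to a gamma-type integral and contributes a finite constant, yielding the bound $O(n^{-\gamma(2u-1)})$.

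For $\gamma=1$, using $r_k^u\sim c^uk^{-u}$ the same pointwise estimate gives
\begin{equation*}
|p_{n,i}|^u|p_{n,j}|^u\cdot(\text{sum}) \le C' c^{2u}\,n^{-ucA}\sum_{k=m_0}^{n-1}k^{ucA-2u}(\log(n/k))^{qu}.
\end{equation*}
Setting $v=\log(n/k)$ rewrites the sum as $n^{ucA-2u+1}\int_0^{\log n}e^{-v(ucA-2u+1)}v^{qu}\,dv$ up to constants, so combining with $n^{-ucA}$ produces a universal prefactor $n^{-(2u-1)}$ times the integral. Under the strict condition (b), $ucA-2u+1>0$ forces the integral to converge to a finite positive constant, yielding $O(n^{-(2u-1)})$. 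Under (c), where $cA=1$, the integrand's exponent equals $2(1-u)$: for $u=1$ it vanishes, the integrand becomes $v^q$, and the integral equals $(\log n)^{q+1}/(q+1)$, producing the bound $n^{-1}(\log n)^{q+1}$; for $u>1$ the integrand grows exponentially and is dominated by its endpoint $v=\log n$, giving $n^{u-1}(\log n)^{qu}/(u-1)$, so the final bound becomes $n^{-u}(\log n)^{qu}$ after combining with the prefactor.

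The main obstacle I anticipate is the bookkeeping required to pass rigorously from discrete sum to Riemann integral while checking that contributions from the range of $k$ far from $n$, where the bare polynomial factor $k^{-2\gamma u}$ (resp.\ $k^{ucA-2u}$) is largest, are genuinely suppressed by the exponential (resp.\ polynomial) decay of the $(k/n)$-type factor. A secondary subtlety arises in case (c) for $u>1$, where the dominant balance of the $v$-integral sits at the endpoint $v=\log n$ rather than in the interior, so a Watson-type endpoint expansion is needed instead of the Laplace-method estimate that suffices in (a) and the interior-dominated sub-case of (c).
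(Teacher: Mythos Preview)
Your approach is correct and the outline is essentially complete. The paper defers the proof to the supplementary material, but the neighbouring Lemmas~\ref{le4} and~\ref{le8} indicate that the authors work through Abel-summation recursions relating $S_{t,n}$, $P_{t,n}$ to their lower-index counterparts and extract both exact limits and growth rates from that machinery. Your route is more direct: by observing that $|p_{n,j}l_{k,j}|^u$ behaves (up to a convergent product of $1+O(r_m^2)$ factors) like the same product with parameter $u\,\mathrm{Re}(\alpha_j)$, you collapse everything to a single Laplace-type integral and avoid the recursion entirely. What you gain is brevity; what you give up is the exact constants the recursive setup yields for Lemma~\ref{le8}, but those are not required here.

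Two points to tighten. First, for $\gamma=1$ the stronger hypothesis $nr_n-c=O(n^{-1})$ from Assumption~\ref{as1} is in force, so the second half of Lemma~\ref{le3} applies and gives the estimates \emph{without} $\varepsilon$; say this explicitly rather than folding it into ``absorbed into the constants'', since an $\varepsilon$-loss in the exponent would genuinely spoil the $O(n^{-(2u-1)})$ rate in~(b). Second, in case~(a) the range $k\le n/2$ (equivalently $y\gtrsim n^{1-\gamma}$) requires a separate sentence: there $k^{-\gamma(2u-1)}$ is bounded by a constant rather than by $n^{-\gamma(2u-1)}$, but each summand carries a factor $\exp[-\beta' n^{1-\gamma}]$ which kills any polynomial in $n$, so the contribution is $o(n^{-M})$ for every $M$. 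Your acknowledgment of this bookkeeping as the main obstacle is accurate, and the resolution is routine once isolated.
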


		\begin{lemma}\label{le14}
			The sequence $(v_{n,k})_{k=m_0}^n$ defined in equations \eqref{eq50} and \eqref{eq45} satisfies 
			\begin{equation*}
				\lim\limits_{n\to\infty}v_{n,k}=0,\ \	\sum\limits_{k=m_0}^n|v_{n,k}-v_{n,k-1}|=O(1).
			\end{equation*}
		\end{lemma}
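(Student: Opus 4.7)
The plan is to exploit the explicit product structure of $v_{n,k}$ in \eqref{eq50} and \eqref{eq45}. In every regime one can write $v_{n,k} = A_n\,\phi_n(k)\,\psi_n(k)$, where $A_n$ is an $n$-dependent scalar, $\phi_n(k)$ collects the polynomial-in-$k$ factors (for instance $(n^{1-\gamma}-k^{1-\gamma})^{s+t}(n/k)^\gamma$ or $(\log n-\log k)^q(n/k)$), and $\psi_n(k) = F_{k+1,n}(x)F_{k+1,n}(y)$ carries the $l_{k,u}l_{k,v}$-type dependence. The structural facts driving both claims are that $\phi_n(\cdot)$ is positive and monotonically decreasing in $k$ on $[m_0,n]$, while $\psi_n(\cdot)$ is positive and monotonically increasing in $k$.

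For the pointwise limit, fix $k$ and let $n\to\infty$. Then $l_{k,u}l_{k,v}$ is a bounded constant in $n$, so $\psi_n(k)$ is proportional to $p_{n,u}p_{n,v}$, which by Lemma \ref{le3} decays either exponentially in $n^{1-\gamma}$ when $1/2<\gamma<1$ or polynomially at rate $n^{-(c-\varepsilon)({\rm Re}(x)+{\rm Re}(y))}$ when $\gamma=1$. This decay strictly dominates the at-most-polynomial-in-$n$ growth of $A_n\phi_n(k)$, yielding $v_{n,k}\to 0$; in the critical case $\gamma=1$, $c(x+y)=1$, the extra division by $(\log n)^{2\rho-1}$ only strengthens the convergence.

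For the total-variation bound, I plan to change variables to $x = k/n \in [0,1]$ and show that $|v_{n,k}|$ is uniformly bounded in both $n$ and $k$, with only a bounded number of monotonicity intervals. Using $F_{k+1,n}(\alpha) \sim \exp[c\alpha\, n^{1-\gamma}(1-x^{1-\gamma})/(1-\gamma)]$ for $\gamma<1$ and $F_{k+1,n}(\alpha) \sim x^{c\alpha}$ for $\gamma=1$, the $n$-dependent exponential/polynomial factors cancel against the prefactor $A_n\, p_{n,u}p_{n,v}$. For $1/2<\gamma<1$, substituting $y = n^{1-\gamma}(1-x^{1-\gamma})$ yields the bound $|v_{n,k}| \le C\, y^{s+t}\, x^{-\gamma}\, \exp[-c{\rm Re}(\alpha_u+\alpha_v)\,y/(1-\gamma)]$, which is uniformly bounded and unimodal in $y$. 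The $\gamma=1$ cases reduce similarly to bounded majorants of the form $x^{c(x+y)-1}(-\log x)^q$ (non-critical) and $\log k\,(\log n-\log k)^q/(\log n)^{2\rho-1}$ (critical). Applying the discrete product rule
\begin{equation*}
v_{n,k} - v_{n,k-1} = \phi_n(k)\bigl[\psi_n(k)-\psi_n(k-1)\bigr] + \psi_n(k-1)\bigl[\phi_n(k)-\phi_n(k-1)\bigr],
\end{equation*}
together with the monotonicity of $\phi_n$ and $\psi_n$, lets one telescope each signed piece separately, so that the discrete total variation is controlled by $C\sup_k |v_{n,k}| = O(1)$.

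The hardest step will be ensuring uniform boundedness of the majorant in the critical regime $\gamma=1$, $c(x+y)=1$, which requires the permissible exponent to satisfy $q\le 2\rho-2$. The maximum of $\log k(\log n-\log k)^q$ over $k\in[m_0,n]$ is of order $(\log n)^{q+1}$, so the full majorant is $O(1)$ precisely when $q+1\le 2\rho-1$. This bound is dictated by the Jordan structure: $q$ arises as $t+s$ with $0\le t\le\rho_u-1$ and $0\le s\le\rho_v-1$, and nontrivial contributions to the critical-case covariance matrix (see \eqref{eq76}) require $\rho_u=\rho_v=\rho$, automatically forcing $q\le 2\rho-2$. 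Once this structural constraint is exploited, the monotonicity-telescoping argument above concludes the proof in all three regimes.
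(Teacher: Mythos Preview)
The paper defers the proof of this lemma to the Supplementary Material, so a line-by-line comparison is not possible. Your overall strategy is sound: the pointwise limit follows exactly as you say from Lemma~\ref{le3}, and for the total-variation bound the route ``uniform sup-bound on $|v_{n,k}|$ plus a bounded number of sign changes of $k\mapsto v_{n,k}$'' is a legitimate way to conclude $\sum_k|v_{n,k}-v_{n,k-1}|=O(1)$. Your identification of the critical constraint $q\le 2\rho-2$, and the reason it is automatically enforced by the Jordan structure in \eqref{eq76}, is also correct.

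Two gaps remain. First, the concluding ``discrete product rule plus monotonicity of $\phi_n,\psi_n$ telescopes'' step does not by itself bound the total variation: a decreasing factor times an increasing factor can have unbounded variation even when their product is bounded. What actually does the work is your earlier unimodality claim, which you motivate heuristically via the shape $y^{s+t}e^{-ay}$ but do not prove. You will need to differentiate the full explicit expression (including the factors $k^{-\gamma}$, respectively $k^{-1}\log k$) and show the derivative has at most a fixed, $n$-independent number of zeros; the product rule you wrote is useful for organising that computation, but not as a substitute for it.

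Second, the eigenvalues $\lambda_u,\lambda_v$ are in general complex, so $F_{k+1,n}(\alpha_u)$ is complex-valued and phrases like ``$\psi_n$ monotonically increasing'' or ``$|v_{n,k}|$ unimodal'' do not control $\sum_k|v_{n,k}-v_{n,k-1}|$. The clean fix is to split $v_{n,k}$ into real and imaginary parts and run the bounded-sign-change argument on each separately, or alternatively to estimate $|v_{n,k}-v_{n,k-1}|$ directly from the discrete derivative of the explicit product and sum the resulting bound; either way, the complex case needs explicit treatment rather than being absorbed into the real monotonicity picture.
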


			\section{Some auxiliary results}\label{secB}
		This section presents auxiliary results that support the proofs in this paper. We begin with a symmetric variant of Lemma A.1.4 in \cite{r4}.
		\begin{lemma}[Modification of the Jordan Space]\label{le7}
			Let ${\bf J}_\lambda$ be the Jordan block associated to the eigenvalue $\lambda$ and let ${\bf Q}_\lambda$ be a base of the generalized eigenspace associated to $\lambda$ such that $ {\bf WQ}_\lambda={\bf Q}_\lambda {\bf J}_\lambda$. Then, it is possible to replace ${\bf J}_\lambda$ and ${\bf Q}_\lambda$ by a new block ${\bf J}_{\beta, \lambda}$ and a new base ${\bf Q}_{\beta, \lambda}$ such that ${\bf WQ}_{\beta, \lambda}={\bf Q}_{\beta,\lambda}{\bf J}_{\beta,\lambda}$ and
			\begin{equation*}
				\left\|{\bf J}_{\beta, \lambda}\right\|_2 \leq \frac{1+|\lambda|}{2}<1 .
			\end{equation*}
		\end{lemma}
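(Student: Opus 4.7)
The plan is to apply a standard diagonal-rescaling argument to the Jordan basis. Given a Jordan block $\mathbf{J}_\lambda$ of size $k$, decompose it as $\mathbf{J}_\lambda = \lambda \mathbf{I}_k + \mathbf{N}_k$, where $\mathbf{N}_k$ is the nilpotent matrix with ones on the superdiagonal and zeros elsewhere. For a parameter $\beta \in (0,1)$ to be chosen, introduce the diagonal matrix $\mathbf{D}_\beta = \mathrm{diag}(1, \beta, \beta^2, \ldots, \beta^{k-1})$, and define
\begin{equation*}
\mathbf{Q}_{\beta,\lambda} := \mathbf{Q}_\lambda \mathbf{D}_\beta, \qquad \mathbf{J}_{\beta,\lambda} := \mathbf{D}_\beta^{-1} \mathbf{J}_\lambda \mathbf{D}_\beta.
\end{equation*}
The algebraic identity is then immediate: $\mathbf{W}\mathbf{Q}_{\beta,\lambda} = \mathbf{W}\mathbf{Q}_\lambda \mathbf{D}_\beta = \mathbf{Q}_\lambda \mathbf{J}_\lambda \mathbf{D}_\beta = \mathbf{Q}_{\beta,\lambda}\mathbf{J}_{\beta,\lambda}$, so the conjugated basis still realizes the invariance of the generalized eigenspace under $\mathbf{W}$.

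Next I would compute $\mathbf{J}_{\beta,\lambda}$ explicitly. Since conjugation by a diagonal matrix scales the $(i,j)$ entry by $d_j/d_i = \beta^{j-i}$, all superdiagonal entries of $\mathbf{N}_k$ get multiplied by $\beta$ while other entries stay zero, giving $\mathbf{D}_\beta^{-1} \mathbf{N}_k \mathbf{D}_\beta = \beta \mathbf{N}_k$. Hence $\mathbf{J}_{\beta,\lambda} = \lambda \mathbf{I}_k + \beta \mathbf{N}_k$.

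The final step is the spectral-norm bound. By the triangle inequality and the fact that $\|\mathbf{N}_k\|_2 = 1$ (as $\mathbf{N}_k$ is a single shift operator), one has
\begin{equation*}
\|\mathbf{J}_{\beta,\lambda}\|_2 \leq \|\lambda\mathbf{I}_k\|_2 + \beta \|\mathbf{N}_k\|_2 = |\lambda| + \beta.
\end{equation*}
Choosing $\beta = (1-|\lambda|)/2$, which lies in $(0,1)$ precisely because $|\lambda| < 1$ for every non-dominant eigenvalue of $\mathbf{W}$ under Assumption \ref{as2}, yields $\|\mathbf{J}_{\beta,\lambda}\|_2 \leq (1+|\lambda|)/2 < 1$, which is the desired conclusion.

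There is no substantive obstacle in the argument; it is a standard linear-algebra manipulation. The only point deserving care is to ensure the hypothesis $|\lambda|<1$ actually holds for the Jordan blocks being rescaled. This is guaranteed by Assumption \ref{as2}: the irreducibility of $\mathbf{W}_{11}$ together with the Perron--Frobenius theorem yields a simple dominant eigenvalue $1$, while the condition $\max_{h\ge 2}\|\mathbf{W}_{hh}\|_1<1$ places all remaining eigenvalues strictly inside the open unit disk. The construction is valid for both real and complex $\lambda$, and it applies blockwise, so assembling the rescaled blocks into a block-diagonal $\mathbf{D}_\beta$ and $\mathbf{J}_\beta$ across all non-dominant eigenvalues completes the argument used in \eqref{eq21}.
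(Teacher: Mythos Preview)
Your proposal is correct and follows essentially the same diagonal-rescaling strategy as the paper: both conjugate $\mathbf{J}_\lambda$ by a diagonal matrix with geometric entries so that the superdiagonal of the new block carries a small parameter, then bound the spectral norm. Your use of the triangle inequality with $\|\mathbf{N}_k\|_2=1$ is in fact a bit cleaner than the paper's route, which computes $\mathbf{J}_{\beta,\lambda}\overline{\mathbf{J}}_{\beta,\lambda}^{\top}$ explicitly and bounds its column sums before choosing $1/\beta=\sqrt{(1+|\lambda|)/2}-|\lambda|$, but the underlying idea is identical.
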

		\begin{proof}
			For any positive real number $\beta$, we define ${\bf D}_\beta={\rm Diag}\left(1, \frac{1}{\beta}, \frac{1}{\beta^2}, \cdots, \frac{1}{\beta^K}\right)$. We have
			\begin{align*}
				{\bf J}_\lambda {\bf D}_\beta= & \begin{pmatrix}
					\lambda & 1 & 0 & \cdots & 0 \\
					0 & \lambda & 1 & \cdots & 0 \\
					\vdots & \vdots & \vdots & \ddots & \vdots \\
					0 & 0 & 0 & \cdots & 1 \\
					0 & 0 & 0 & \cdots & \lambda
				\end{pmatrix} \begin{pmatrix}
					1 & 0 & 0 & \cdots & 0 \\
					0 & 1/{\beta} & 0 & \cdots & 0 \\
					\vdots & \vdots & \vdots & \ddots & \vdots \\
					0 & 0 & 0 & \cdots & 0 \\
					0 & 0 & 0 & \cdots & 1/{\beta^{K-1}}
				\end{pmatrix}\\
				=& \begin{pmatrix}
					1 & 0 & 0 & \cdots & 0 \\
					0 & 1/{\beta} & 0 & \cdots & 0 \\
					\vdots & \vdots & \vdots & \ddots & \vdots \\
					0 & 0 & 0 & \cdots & 0 \\
					0 & 0 & 0 & \cdots & 1/{\beta^{K-1}}
				\end{pmatrix}\begin{pmatrix}
					\lambda & 1/\beta & 0 & \cdots & 0 \\
					0 & \lambda & 1/\beta & \cdots & 0 \\
					\vdots & \vdots & \vdots & \ddots & \vdots \\
					0 & 0 & 0 & \cdots & 1/\beta \\
					0 & 0 & 0 & \cdots & \lambda
				\end{pmatrix}={\bf D}_\beta{\bf J}_{\beta,\lambda} 
			\end{align*}
			Given relations ${\bf WQ}_\lambda={\bf Q}_\lambda {\bf J}_\lambda$ and ${\bf J}_\lambda {\bf D}_\beta={\bf D}_\beta{\bf J}_{\beta,\lambda}$, setting the new base ${\bf Q}_{\beta,\lambda}={\bf Q}_\lambda{\bf D}_\beta$ leads to
			\begin{equation*}
				{\bf W}{\bf Q}_{\beta,\lambda}={\bf W}{\bf Q}_\lambda{\bf D}_\beta={\bf Q}_\lambda{\bf J}_\lambda{\bf D}_\beta={\bf Q}_\lambda{\bf D}_\beta{\bf J}_{\beta,\lambda}={\bf Q}_{\beta,\lambda}{\bf J}_{\beta,\lambda}.
			\end{equation*}
			
			Since $1/\beta$ becomes adequately small when $\beta$ is sufficiently large. Given that $|\lambda|<1$, we aim to select $1/\beta$ small enough to ensure that $\|{\bf J}_{\beta,\lambda}\|_2$ is sufficiently close to $|\lambda|$, thereby guaranteeing its modulus remains less than $1$. To achieve this, note that
			\begin{equation*}
				{\bf J}_{\beta,\lambda}\overline{{\bf J}}_{\beta,\lambda}^\top=\begin{pmatrix}
					|\lambda|+1/{|\beta|^2} & {\bar{\lambda}}/{\beta} & 0 & \cdots & 0 & 0 & 0 \\
					{\lambda}/{\beta} & |\lambda|+1/{|\beta|^2} & \;\;\bar{\lambda}/\beta & \cdots & 0 & 0 & 0 \\
					\vdots & \vdots & \vdots & & \vdots & \vdots & \vdots \\
					0 & 0 & 0 & \cdots & \;\;{\lambda}/{\beta} & \;\;|\lambda|+1/{|\beta|^2} & \;\;{\bar{\lambda}}/{\beta} \\
					0 & 0 & 0 & \cdots & 0 & {\lambda}/{\beta} & |\lambda|
				\end{pmatrix},
			\end{equation*}
			then we obtain
			\begin{equation*}
				\|{\bf J}_{\beta,\lambda}\|_2\le \max\limits_{j\in\{1,2,\cdots,K\}}\left\{\sum\limits_{i=1}^{K}|[{\bf J}_{\beta,\lambda}\overline{{\bf J}}_{\beta,\lambda}^\top]_{i,j}|\right\}\le\left(|\lambda|+1/\beta\right)^2
			\end{equation*}
			By choosing $1/\beta=\sqrt{\frac{1+|\lambda|}{2}}-|\lambda|>0$, we may now conclude that Lemma \ref{le7} holds.
		\end{proof}
		
		\begin{lemma}[Lemma A.1 in \cite{r9}]\label{le9}
			Let $\left(x_n\right)_n$ be a sequence of positive numbers that satisfies the following equation:
			\begin{equation*}
				x_{n+1}=\left(1-a r_n\right) x_n+K_n r_n^2,
			\end{equation*}
			where $a>0, r_n \geq 0$ and $0 \leq K_n \leq K$. Suppose that
			\begin{equation*}
				\sum_n r_n=+\infty \quad \text { and } \quad \sum_n r_n^2<+\infty.
			\end{equation*}
			Then $\lim _{n \to\infty} x_n=0$.
		\end{lemma}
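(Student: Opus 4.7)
The plan is to iterate the recurrence defining $(x_n)$ from a sufficiently large index, split the resulting expression into a transient initial-condition term and an accumulated perturbation term, and show that both vanish in the limit. Since $\sum_n r_n^2 < +\infty$ forces $r_n \to 0$, I would first choose $n_0$ large enough that $a r_n \in [0, 1)$ for every $n \ge n_0$, so that every factor $1 - a r_n$ lies in $[0, 1]$. Writing $\Pi_{k, n} := \prod_{m = k}^{n}(1 - a r_m)$, the recursion then yields
\begin{equation*}
x_{n+1} \le \Pi_{n_0, n} \, x_{n_0} + K \sum_{j = n_0}^{n} r_j^2 \, \Pi_{j+1, n},
\end{equation*}
which I denote $A_n + B_n$.

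The first term $A_n$ is controlled by the standard inequality $\log(1 - a r_m) \le -a r_m$: since $\sum_m r_m = +\infty$, we get $\Pi_{n_0, n} \le \exp\bigl(-a \sum_{m = n_0}^{n} r_m\bigr) \to 0$, so $A_n \to 0$. For $B_n$, the plan is a split argument: given $\varepsilon > 0$, choose $N \ge n_0$ with $K \sum_{j \ge N} r_j^2 < \varepsilon / 2$, and decompose $B_n = B_n^{(1)} + B_n^{(2)}$ at the threshold $N$. The tail $B_n^{(2)}$ is bounded by $K \sum_{j \ge N} r_j^2 < \varepsilon / 2$ using only $\Pi_{j+1, n} \le 1$, while the head $B_n^{(1)}$ is a fixed finite sum multiplied by $\Pi_{N, n}$, and the latter goes to zero as $n \to \infty$ by the same argument used for $A_n$. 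Sending $\varepsilon \to 0$ then delivers $B_n \to 0$, completing the proof.

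The argument is short and self-contained, so there is no deep obstacle; the only thing that requires care is the split at $N$. Using $\Pi_{j+1, n} \le 1$ uniformly would discard the decay of the product, while attempting to use $\Pi_{j+1, n} \le \Pi_{N, n}$ throughout would still leave a coefficient $\sum_{j \ge n_0} r_j^2$ that is finite but not small a priori. Balancing these two bounds via the threshold $N$ is the key reconciliation, and the finiteness of $\sum_n r_n^2$ together with the divergence of $\sum_n r_n$ is precisely what makes both halves tractable.
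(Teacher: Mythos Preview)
Your argument is correct. The paper does not supply its own proof of this lemma; it simply quotes the statement as Lemma~A.1 of \cite{r9} and uses it as a black box. Your iteration-plus-split argument is the standard way to establish this result and goes through exactly as you describe: the choice of $n_0$ ensuring $ar_n<1$, the bound $\Pi_{n_0,n}\le\exp(-a\sum r_m)\to0$ for the initial-condition term, and the $\varepsilon$-split at $N$ for the accumulated sum are all sound.
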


		\begin{lemma}[Lemma 1 in \cite{r10}]\label{le11}
			Suppose that ${\cal C}_n$ and ${\cal D}_n$ are ${\cal S}$-valued raning) filtration satisfying for all $n$ :
			\begin{equation*}
				\sigma\left({\cal C}_n\right) \subseteq \mathcal{G}_n \quad \text { and } \quad \sigma\left({\cal D}_n\right) \subseteq \sigma\left(\bigcup_n \mathcal{G}_n\right).	
			\end{equation*}
			If ${\cal C}_n$ stably converges to ${\cal M}$ and ${\cal D}_n$ converges to ${\cal N}$ stably in the strong sense, with respect to $\mathcal{G}$, then
			\begin{equation*}
				\left[{\cal C}_n, {\cal D}_n\right] \longrightarrow {\cal M} \otimes {\cal N} \quad \text { stably. }
			\end{equation*}
			Here, ${\cal M} \otimes {\cal N}$ is the kernel on ${\cal S} \times {\cal S}$ such that $({\cal M} \otimes {\cal N})(\omega)={\cal M}(\omega) \otimes {\cal N}(\omega)$ for all $\omega$.
		\end{lemma}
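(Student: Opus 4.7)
The strategy is to verify joint stable convergence via the standard test-function criterion: for every pair of bounded continuous functions $f, g$ on ${\cal S}$ and every event $A \in \mathcal{G}_\infty := \sigma(\bigcup_n \mathcal{G}_n)$,
\[ \mathbb{E}[f({\cal C}_n)\,g({\cal D}_n)\,\mathbf{1}_A] \longrightarrow \mathbb{E}[{\cal M}(f)\,{\cal N}(g)\,\mathbf{1}_A], \]
where ${\cal M}(f)=\int f\,d{\cal M}$ and ${\cal N}(g)=\int g\,d{\cal N}$. The decisive structural asymmetry is that $f({\cal C}_n)$ is $\mathcal{G}_n$-measurable, whereas $g({\cal D}_n)$ need not be, but enjoys the stronger strong-sense stable convergence. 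In the sense of Crimaldi--Letta--Pratelli, this strong-sense convergence amounts to $\mathbb{E}[g({\cal D}_n)\mid\mathcal{G}_n]\to{\cal N}(g)$ in probability, which upgrades to $L^1$ by boundedness of $g$.

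I would first establish the limit for events from the generating $\pi$-system $\bigcup_k \mathcal{G}_k$. Fix $A\in\mathcal{G}_k$; then for all $n\ge k$ the indicator $\mathbf{1}_A$ is $\mathcal{G}_n$-measurable, so combining this with the $\mathcal{G}_n$-measurability of $f({\cal C}_n)$, the tower property yields
\[ \mathbb{E}[f({\cal C}_n)\,g({\cal D}_n)\,\mathbf{1}_A] = \mathbb{E}\bigl[f({\cal C}_n)\,\mathbf{1}_A\,\mathbb{E}[g({\cal D}_n)\mid\mathcal{G}_n]\bigr]. \]
Replacing the inner conditional expectation by its $L^1$ limit ${\cal N}(g)$ costs only $o(1)$, thanks to the uniform bound $|f({\cal C}_n)\mathbf{1}_A|\le\|f\|_\infty$, so
\[ \mathbb{E}[f({\cal C}_n)\,g({\cal D}_n)\,\mathbf{1}_A] = \mathbb{E}[f({\cal C}_n)\,{\cal N}(g)\,\mathbf{1}_A] + o(1). \]
Since ${\cal N}(g)\mathbf{1}_A$ is bounded and $\mathcal{G}_\infty$-measurable, the stable convergence of ${\cal C}_n$ applied to this test random variable delivers $\mathbb{E}[f({\cal C}_n)\,{\cal N}(g)\,\mathbf{1}_A]\to\mathbb{E}[{\cal M}(f)\,{\cal N}(g)\,\mathbf{1}_A]$, which is exactly the claim for $A$ in the $\pi$-system.

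The main obstacle is the passage from $A\in\bigcup_k\mathcal{G}_k$ to general $A\in\mathcal{G}_\infty$. The family of events for which the joint limit holds is readily seen to be a Dynkin system: closure under complements follows from linearity in $\mathbf{1}_A$, and closure under increasing limits from dominated convergence, for which the uniform bound $|f({\cal C}_n)g({\cal D}_n)|\le\|f\|_\infty\|g\|_\infty$ together with the $L^1$ convergence of the conditional expectations in the previous step is what must be carefully verified uniformly in $n$. Since this Dynkin system contains the generating $\pi$-system $\bigcup_k\mathcal{G}_k$, the $\pi$--$\lambda$ theorem extends the identity to all of $\mathcal{G}_\infty$. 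The final upgrade from bounded continuous $f, g$ on ${\cal S}$ to the full joint-kernel identification $[{\cal C}_n,{\cal D}_n]\to{\cal M}\otimes{\cal N}$ on ${\cal S}\times{\cal S}$ is then routine, using a Stone--Weierstrass density argument to approximate arbitrary $h\in C_b({\cal S}\times{\cal S})$ by finite linear combinations of product tensors $f\otimes g$.
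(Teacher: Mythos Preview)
The paper does not prove this lemma; it is stated in Appendix~\ref{secB} purely as a citation of Lemma~1 in Berti, Crimaldi, Pratelli and Rigo (2011), with no accompanying argument. So there is no ``paper's own proof'' to compare against.

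Your outline is essentially the standard proof of this result and is correct in its main line. The key mechanism---conditioning on $\mathcal{G}_n$ to isolate $f({\cal C}_n)\mathbf{1}_A$, then replacing $\mathbb{E}[g({\cal D}_n)\mid\mathcal{G}_n]$ by ${\cal N}(g)$ via the $L^1$ convergence that characterises strong-sense stable convergence, and finally invoking ordinary stable convergence of ${\cal C}_n$ against the bounded $\mathcal{G}_\infty$-measurable test variable ${\cal N}(g)\mathbf{1}_A$---is exactly right. The Dynkin extension also works for the reason you hint at: for $A_m\uparrow A$ one has $|\mathbb{E}[f({\cal C}_n)g({\cal D}_n)(\mathbf{1}_A-\mathbf{1}_{A_m})]|\le\|f\|_\infty\|g\|_\infty\,\mathbb{P}(A\setminus A_m)$, which is small uniformly in $n$, so the interchange of limits is justified without further subtlety.

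One point to tighten: the final Stone--Weierstrass step is not quite routine unless ${\cal S}$ is compact. For a general Polish ${\cal S}$, tensor products $f\otimes g$ are not uniformly dense in $C_b({\cal S}\times{\cal S})$. The clean way to finish is to note that the class of product functions is measure-determining (it separates probability measures on ${\cal S}\times{\cal S}$), and since stable convergence is equivalent to convergence of $\mathbb{E}[h({\cal C}_n,{\cal D}_n)\mathbf{1}_A]$ for a measure-determining class of bounded continuous $h$ and all $A\in\mathcal{G}_\infty$, the identification of the limiting kernel as ${\cal M}\otimes{\cal N}$ follows directly without needing uniform approximation.
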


		\begin{lemma}[Lemma B.1 of \cite{r2}]\label{le10}
			Let ${\cal H}=\left({\cal H}_n\right)_n$ be a filtration and $\left(Y_n\right)_n$ a ${\cal H}$-adaptea sequence of complex random variables such that $E\left[Y_n|{\cal H}_{n-1}\right] \rightarrow Y$ almost surely. Moreover, let $\left(c_n\right)_n$ be a sequence of strictly positive real numbers such that $\sum_n E\left[|Y_n|^2\right]/c_n^2<+\infty$ and let $\left\{v_{n,k}, 1\leq k \leq n\right\}$ be a triangular array of complex numbers such that $v_{n,k}\neq 0$ and
			\begin{align*}
				&\lim _n v_{n, k}=0, \quad \lim _n v_{n, n} \text { exists\ finite, } \quad \lim _n \sum_{k=1}^n \frac{v_{n, k}}{c_k}=\eta \in \mathbb{C}, \\
				&\sum_{k=1}^n \frac{|v_{n, k}|}{c_k}=O(1), \quad \sum_{k=1}^n|v_{n, k}-v_{n, k-1}|=O(1).
			\end{align*}
			Then $\sum_{k=1}^n v_{n, k} Y_k / c_k \xrightarrow{\text { a.s }} \eta Y$.
		\end{lemma}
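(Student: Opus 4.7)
The plan is to decompose $Y_k$ into its predictable and martingale-difference parts with respect to $\mathcal{H}$ and handle the two pieces by separate mechanisms. Setting $\Delta_k := Y_k - \mathbb{E}[Y_k\mid\mathcal{H}_{k-1}]$, I would write
\begin{equation*}
    \sum_{k=1}^n \frac{v_{n,k}}{c_k} Y_k \;=\; \sum_{k=1}^n \frac{v_{n,k}}{c_k}\, \mathbb{E}[Y_k\mid\mathcal{H}_{k-1}] \;+\; \sum_{k=1}^n \frac{v_{n,k}}{c_k}\,\Delta_k \;=:\; A_n + B_n.
\end{equation*}
The sum $A_n$ carries the deterministic drift toward $\eta Y$, while $B_n$ is a weighted martingale-type contribution that should vanish.

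To handle $A_n$, I would run a Toeplitz-style argument. Writing $\mathbb{E}[Y_k\mid\mathcal{H}_{k-1}] = Y + \varepsilon_k$ with $\varepsilon_k\to 0$ a.s., one gets $A_n = Y\sum_{k=1}^n v_{n,k}/c_k + \sum_{k=1}^n v_{n,k}\varepsilon_k/c_k$. The first term tends to $\eta Y$ by hypothesis. For the error term, I would fix $\epsilon>0$, choose $K$ so that $|\varepsilon_k|<\epsilon$ for $k>K$, and split at $K$: the head $\sum_{k\le K}$ vanishes as $n\to\infty$ because $v_{n,k}\to 0$ for each fixed $k$, while the tail is bounded in modulus by $\epsilon \cdot \sum_{k=1}^n |v_{n,k}|/c_k = O(\epsilon)$ from the bounded-mass hypothesis. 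Letting $\epsilon\downarrow 0$ gives $A_n\to \eta Y$ a.s.

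For $B_n$, I would introduce the complex-valued martingale $M_n := \sum_{k=1}^n \Delta_k/c_k$. Since $\mathbb{E}[|\Delta_k|^2]\le \mathbb{E}[|Y_k|^2]$, Doob's $L^2$ criterion combined with $\sum_k \mathbb{E}[|Y_k|^2]/c_k^2<\infty$ gives $M_n\to M_\infty$ almost surely and in $L^2$. Applying Abel summation,
\begin{equation*}
    B_n \;=\; v_{n,n} M_n \;-\; \sum_{k=1}^{n-1} (v_{n,k+1}-v_{n,k}) M_k,
\end{equation*}
and replacing $M_k$ by $M_\infty + (M_k-M_\infty)$, the telescoping sum of the constant part equals $(v_{n,n}-v_{n,1})M_\infty$, so
\begin{equation*}
    B_n \;=\; v_{n,n}(M_n-M_\infty) \;+\; v_{n,1} M_\infty \;-\; \sum_{k=1}^{n-1}(v_{n,k+1}-v_{n,k})(M_k-M_\infty).
\end{equation*}
The first two terms vanish since $v_{n,n}$ tends to a finite limit with $M_n\to M_\infty$ and $v_{n,1}\to 0$. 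The residual sum is controlled by the same $K$-splitting trick as for $A_n$, but now with the bounded-variation hypothesis $\sum_k |v_{n,k+1}-v_{n,k}|=O(1)$ playing the role of the bounded-mass bound. Combining $A_n\to\eta Y$ and $B_n\to 0$ would finish the proof.

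The main obstacle is ensuring uniformity in $n$ throughout the splitting arguments. The pointwise conditions $v_{n,k}\to 0$, $M_k\to M_\infty$, and $\varepsilon_k\to 0$ are not by themselves enough to control weighted sums of length $n$; it is precisely the pairing with the two $O(1)$ conditions on $v_{n,k}$ — absolute mass for $A_n$ and bounded variation for $B_n$ — that enables a dominated-convergence-style closure of both estimates. Correctly identifying which uniform bound is needed in each regime is the delicate point of the argument.
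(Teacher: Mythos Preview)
The paper does not prove this lemma; it is quoted verbatim as Lemma~B.1 of \cite{r2} in Appendix~B among the auxiliary results, with no argument supplied. So there is no in-paper proof to compare against.

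That said, your proposal is correct and is essentially the standard argument for this type of Toeplitz-plus-martingale lemma. The decomposition into the predictable part $A_n$ and the martingale part $B_n$, the Toeplitz treatment of $A_n$ via the bounded-mass hypothesis, and the Abel-summation treatment of $B_n$ via the bounded-variation hypothesis are exactly how such results are proved (and, as far as one can tell from the literature, how \cite{r2} proceeds as well). Your identification of which $O(1)$ condition is needed for which piece is the right one, and the head/tail splitting works because $v_{n,k}\to 0$ for fixed $k$ implies $v_{n,k+1}-v_{n,k}\to 0$ for fixed $k$, while $(M_k-M_\infty)_k$ is almost surely bounded as a convergent sequence.
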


		\begin{theorem}[Proposition 3.1 of \cite{r6}]\label{th6}
			Let $(\mathbf{T}_{n,k})_{n \geq 1,1 \leq k \leq k_n}$ be a triangular array of d-dimensional real random vectors, such that, for each fixed n, the finite sequence $(\mathbf{T}_{n, k})_{1 \leq k \leq k_n}$ is a martingale difference array with respect to a given filtration $(\mathcal{G}_{n, k})_{k \geq 0}$. Moreover, let $(t_n)_n$ be a sequence of real numbers and assume that the following conditions hold:
			
			{\rm (c1)} $\mathcal{G}_{n, k} \subseteq \mathcal{G}_{n+1, k}$ for each $n$ and $1 \leq k \leq k_n${\rm ;}
			
			{\rm (c2)} $\sum_{k=1}^{k_n}\left(t_n \mathbf{T}_{n, k}\right)\left(t_n \mathbf{T}_{n, k}\right)^{\top}=t_n^2 \sum_{k=1}^{k_n} \mathbf{T}_{n, k} \mathbf{T}_{n, k}^{\top} \xrightarrow{P} {\bm\Sigma}$, where ${\bm\Sigma}$ is a random positive semidefinite matrix{\rm ;}
			
			{\rm (c3)} $\sup _{1 \leq k \leq k_n}\left|t_n \mathbf{T}_{n, k}\right| \xrightarrow{L^1} 0$.\\
			Then $t_n \sum_{k=1}^{k_n} \mathbf{T}_{n, k}$ converges stably to the Gaussian kernel ${\cal N}(\mathbf{0}, {\bm\Sigma})$.
		\end{theorem}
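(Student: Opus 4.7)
The plan is to establish stable convergence via the method of conditional characteristic functions, in the spirit of the classical Hall--Heyde martingale CLT combined with the Aldous--Eagleson treatment of stable limits. Set $\boldsymbol{\xi}_{n,k}=t_n\mathbf{T}_{n,k}$, $V_{n,k}=\sum_{j\le k}\boldsymbol{\xi}_{n,j}$, and $U_{n,k}=\sum_{j\le k}\boldsymbol{\xi}_{n,j}\boldsymbol{\xi}_{n,j}^{\top}$. Stable convergence of $V_{n,k_n}$ to the Gaussian kernel ${\cal N}(\mathbf{0},\boldsymbol{\Sigma})$ is equivalent to the assertion that, for every $\mathbf{u}\in\mathbb{R}^{d}$ and every bounded random variable $Z$ measurable with respect to $\mathcal{G}_\infty:=\sigma\bigl(\bigcup_n \mathcal{G}_{n,k_n}\bigr)$, one has
\begin{equation*}
\mathbb{E}\bigl[Z\,e^{i\mathbf{u}^{\top} V_{n,k_n}}\bigr]\;\longrightarrow\;\mathbb{E}\bigl[Z\,\exp\bigl(-\tfrac{1}{2}\mathbf{u}^{\top}\boldsymbol{\Sigma}\mathbf{u}\bigr)\bigr].
\end{equation*}

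The central technical device would be a compensator-telescoping argument. I introduce
\begin{equation*}
M_{n,k}(\mathbf{u})=\exp\bigl(i\mathbf{u}^{\top}V_{n,k}+\tfrac{1}{2}\mathbf{u}^{\top}U_{n,k}\mathbf{u}\bigr),
\end{equation*}
and decompose $M_{n,k_n}(\mathbf{u})-1=\sum_{k=1}^{k_n}[M_{n,k}(\mathbf{u})-M_{n,k-1}(\mathbf{u})]$. Applying a Taylor expansion to the factor $e^{i\mathbf{u}^{\top}\boldsymbol{\xi}_{n,k}}$ inside each increment produces three contributions: a first-order martingale-difference part that vanishes in conditional mean because $\mathbb{E}[\boldsymbol{\xi}_{n,k}\mid\mathcal{G}_{n,k-1}]=\mathbf{0}$; a quadratic contribution that is exactly cancelled by the $U_{n,k}$ compensator; and a cubic remainder bounded by $\|\mathbf{u}\|^{3}|\boldsymbol{\xi}_{n,k}|^{3}$. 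Summing the remainder and using $\sum_k|\boldsymbol{\xi}_{n,k}|^{3}\le\sup_k|\boldsymbol{\xi}_{n,k}|\cdot\mathrm{tr}(U_{n,k_n})$, condition (c3) drives $\sup_k|\boldsymbol{\xi}_{n,k}|\to 0$ in $L^{1}$, while (c2) controls $\mathrm{tr}(U_{n,k_n})$ in probability; a short truncation argument then gives $\mathbb{E}[Z M_{n,k_n}(\mathbf{u})]\to\mathbb{E}[Z]$. A final application of (c2) yields $\exp(\tfrac{1}{2}\mathbf{u}^{\top}U_{n,k_n}\mathbf{u})\to\exp(\tfrac{1}{2}\mathbf{u}^{\top}\boldsymbol{\Sigma}\mathbf{u})$ in probability, and rearranging delivers the target convergence.

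The main obstacle is the stable (rather than merely distributional) nature of the limit: since $\boldsymbol{\Sigma}$ is itself random and $\mathcal{G}_\infty$-measurable, the test variable $Z$ may depend on the whole array in a nontrivial way, and one must be free to condition on arbitrary events in $\mathcal{G}_\infty$ throughout the telescoping step. This is exactly what hypothesis (c1) secures: the nesting $\mathcal{G}_{n,k}\subseteq\mathcal{G}_{n+1,k}$ implies that any bounded $\mathcal{G}_\infty$-measurable $Z$ can be $L^{1}$-approximated by variables measurable with respect to some fixed $\mathcal{G}_{n_0,k_{n_0}}$, so that the factor $Z$ in the argument above can be absorbed into the successive conditional expectations without loss. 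A secondary subtlety is that (c3) is phrased as an $L^{1}$ condition on the maximum rather than a genuine Lindeberg condition, so the cubic remainder bound has to be obtained by truncating $\boldsymbol{\xi}_{n,k}$ at a slowly vanishing threshold and combining (c3) on the truncated part with (c2) on the residual $L^{2}$ mass; this is the only step that requires genuine care, the rest being essentially bookkeeping.
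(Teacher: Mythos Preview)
The paper does not prove this statement at all: Theorem~\ref{th6} is quoted verbatim as Proposition~3.1 of Crimaldi and Pratelli (2005) and used as a black box, so there is no in-paper argument to compare against.

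Your sketch follows the correct overall strategy (conditional characteristic functions, compensated exponential, nesting condition (c1) to upgrade distributional convergence to stable convergence via approximation of $\mathcal{G}_\infty$-measurable test variables), and this is indeed the route taken in the original reference. There is, however, one genuine gap in the write-up. Your compensator
\[
M_{n,k}(\mathbf{u})=\exp\!\bigl(i\mathbf{u}^{\top}V_{n,k}+\tfrac{1}{2}\mathbf{u}^{\top}U_{n,k}\mathbf{u}\bigr)
\]
has modulus $\exp\!\bigl(\tfrac{1}{2}\mathbf{u}^{\top}U_{n,k}\mathbf{u}\bigr)\ge 1$, which is not a priori integrable; condition (c2) controls $U_{n,k_n}$ only in probability, so the expectation $\mathbb{E}[Z\,M_{n,k_n}(\mathbf{u})]$ you write down need not even be finite, let alone converge. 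The truncation you mention is placed at the cubic-remainder step, but the real localization needed is earlier: one introduces the stopping time $\tau_n=\inf\{k:\mathrm{tr}(U_{n,k})>C\}$, runs the entire telescoping argument on the stopped array (where everything is bounded by $e^{C\|\mathbf u\|^2/2}$), and only at the end lets $C\to\infty$ using (c2) to show $\mathbb{P}(\tau_n\le k_n)\to 0$. Without this device the expectations in your argument are formal, and the ``short truncation argument'' you allude to does not repair it because it targets the Lindeberg remainder rather than the integrability of the compensator itself.
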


	\end{appendix}
	
	\begin{acks}[Acknowledgments]
		Li Yang was supported by Key technologies for coordination and interoperation of power distribution service resource under Grant No. 2021YFB2401300. Dandan Jiang was partially supported by NSFC under Grant No. 12571311 and 12326606. Jiang Hu was partially supported by NSFC Grants No. 12171078, No. 12292980, No. 12292982, National Key R $\&$ D Program of China No. 2020YFA0714102, and Fundamental Research Funds for the Central Universities No. 2412023YQ003. Zhidong Bai was partially supported by NSFC Grants No.12171198, No.12271536, and Team Project of Jilin Provincial Department of Science and Technology No.20210101147JC. 

    \vspace{0.3cm} 
    Correspondence concerning this article should be addressed to Dandan Jiang, E-mail: jiangdd@mail.xjtu.edu.cn.
	\end{acks}

	\begin{supplement}
		\stitle{Supplement to ``Asymptotics for Reinforced Stochastic Processes on Hierarchical Networks''}
		\sdescription{This supplementary file provides the proofs of
			the technical lemmas in Appendix \ref{secA} and some computations used in Section \ref{sec4}.}
	\end{supplement}
	
	

\begin{thebibliography}{4}
			
			\bibitem{r11}
			\textsc{Newman, M.} (2010). \textit{Networks: An Introduction}.
			Oxford: Oxford University Press.
			
			\bibitem{r12}
			\textsc{Latora, V.}, \textsc{Nicoasia, V.} and \textsc{Russo, G.} (2017).
			\textit{Complex Networks: Principles, Methods and Applications.}
			Cambridge: Cambridge University Press.
			
			\bibitem{r13}
			\textsc{Bullmore, E.} and \textsc{Sporns, O.} (2009).
			Complex brain networks: graph theoretical analysis of structural and functional systems.
			\textit{Nature Reviews Neuroscience}
			\textbf{10} 186--198.
			
			\bibitem{r14}
			\textsc{Sporns, O.} (2010).
			\textit{Networks of the Brain.}
			Cambridge, MA: The MIT Press.
			
			\bibitem{r15}
			\textsc{Bascompte, J.} and \textsc{Jordano, P.} (2007).
			Plant-Animal Mutualistic Networks: The Architecture of Biodiversity.
			\textit{Annual Review of Ecology, Evolution, and Systematics}
			\textbf{38} 567--593.
			
			\bibitem{r16}
			\textsc{Barab\'{a}si, A. L.} and \textsc{Oltvai, Z. N.} (2004).
			Network biology: understanding the cell's functional organization.
			\textit{Nature Reviews Genetics}
			\textbf{5}(2) 101--113.
			
			\bibitem{r17}
			\textsc{Jackson, M. O.} (2008).
			\textit{Social and Economic Networks.}
			United States: Princeton University Press.
			
			\bibitem{r34}
			\textsc{Zhu, X.}, \textsc{Wang, W.}, \textsc{Wang, H.}, and \textsc{Hardle, W. K.} (2019).
			Network quantile autoregression.
			\textit{Journal of Econometrics}
			\textbf{212} 345--358.
			
			\bibitem{r18}
			\textsc{Iacopini, I.}, \textsc{Di Bona, G.}, \textsc{Ubaldi, E.}, \textsc{Loreto, V.} and \textsc{Latora, V.} (2020).
			Interacting Discovery Processes on Complex Networks.
			\textit{Physical Review Letters}
			\textbf{125} 248301.
			
			\bibitem{r19}
			\textsc{Aral, S.} and \textsc{Nicolaides, C.} (2017).
			Exercise contagion in a global social network.
			\textit{Nature Communications}
			\textbf{8} 14753.
			
			\bibitem{r20}
			\textsc{Jackson, M. O.} and \textsc{Rogers, B. W.} (2007).
			Meeting Strangers and Friends of Friends: How Random Are Social Networks?
			\textit{American Economic Review}
			\textbf{97}(3) 890--915.
			
			\bibitem{r21}
			\textsc{Eggenberger, F.} and \textsc{P\'{o}lya G.} (1923).
			\"{U}ber die Statistik Verketteter Vorg\"{a}nge.
			\textit{Zeitschrift f\"{u}r aneewandte Mathematik und Mechanik}
			\textbf{3} 279--289.
			
			\bibitem{r22}
			\textsc{Smythe, R. T.} (1996).
			Central limit theorems for urn models.
			\textit{Stochastic Processes And Their Applications}
			\textbf{65} 115--137.
			
			\bibitem{r23}
			\textsc{Pemantle, R.} (2007).
			A survey of random processes with reinforcement.
			\textit{Probability Surveys}
			\textbf{4} 1--79.
			
			\bibitem{r24}
			\textsc{Mahmoud, H. M.} (2003).
			P\'{o}lya Urn Models and Connections to Random Trees: A Review.
			\textit{Journal of the Iranian Statistical Society}
			\textbf{2} 53--114.
			
			\bibitem{r25}
			\textsc{Bai, Z. D.} and \textsc{Hu, F.} (2005).
			Asymptotics in randomized urn models.
			\textit{The Annals of Applied Probability}
			\textbf{15} 914--940.
			
			\bibitem{r35}
			\textsc{Mahmoud, H. M.} (2009).
			\textit{P\'{o}lya Urn Models.}
			Boca Raton, FL: CRC Press.

            
			
			\bibitem{r26}
			\textsc{May, C.} and \textsc{Flournoy, N.} (2009).
			Asymptotics in Response-Adaptive Designs Generated by a Two-Color, Randomly Reinforced Urn.
			\textit{The Annals of Statistics}
			\textbf{37} 1058--1078.
			
			\bibitem{r27}
			\textsc{Laruelle, S.} and \textsc{Pag\`{e}s, G.} (2013).
			Randomized Urn Models revisited using Stochastic Approximation.
			\textit{The Annals of Applied Probability}
			\textbf{23} 1409--1436.
			
			\bibitem{r28}
			\textsc{Zhang, L. X.} (2016).
			Central Limit Theorems of a Recursive Stochastic Algorithm with Applications to Adaptive Designs.
			\textit{The Annals of Applied Probability}
			\textbf{26}(6) 3630--3658.
			
			\bibitem{r29}
			\textsc{Kuba, M.} and \textsc{Mahmoud, H.} (2017).
			Two-color balanced affine urn models with multiple drawings.
			\textit{Advances In Applied Mathematics}
			\textbf{90} 1--26.
			
			\bibitem{r30}
			\textsc{Ghiglietti, A.}, \textsc{Vidyashankar A. N.} and \textsc{Rosenberger, W. F.} (2017).
			Central limit theorem for an adaptive randomly reinforced urn model.
			\textit{The Annals of Applied Probability}
			\textbf{27} 2956--3003.
			
			\bibitem{r31}
			\textsc{Aletti, G.}, \textsc{Ghiglietti, A.} and \textsc{Vidyashankar, A. N.} (2018).
			Dynamics of an adaptive randomly reinforced urn.
			\textit{Bernoulli}
			\textbf{24} 2204--2255.
			
			\bibitem{r32}
			\textsc{Yang, L.}, \textsc{Hu, J.} and \textsc{Bai, Z. D.} (2024).
			Revisit of a Diaconis urn model.
			\textit{Stochastic Processes And Their Applications}
			\textbf{172} 104352.
			
			\bibitem{r33}
			\textsc{Yang, L.}, \textsc{Hu, J.}, \textsc{Li, J. H.} and \textsc{Bai, Z. D.} (2024).
			Asymptotic properties of a multicolored random reinforced urn model with an application to multi-armed bandits.
			\textit{arXiv preprint, arXiv:2406.10854}.
			
			\bibitem{r36}
			\textsc{Launay, M.} (2011).
			Interacting urn models.
			\textit{arXiv preprint, arXiv:1101.1410}.
			
			\bibitem{r37}
			\textsc{Dai Pra, P.}, \textsc{Louis, P. Y.} and \textsc{Minelli, I. G.} (2014).
			Synchronization via interacting reinforcement.
			\textit{Journal of Applied Probability}
			\textbf{51}(2) 556--568.
			
			\bibitem{r38}
			\textsc{Crimaldi, I.}, \textsc{Dai Pra, P.} and \textsc{Minelli, I. G.} (2016).
			Fluctuation theorems for synchronization of interacting P\'{o}lya's urns.
			\textit{Stochastic Processes and their Applications}
			\textbf{126} 930--947.
			
			\bibitem{r39}
			\textsc{Marsili, M.} and \textsc{Valleriani, A.} (1998).
			Self organization of interacting p\'{o}lya urns.
			\textit{The European Physical Journal B}
			\textbf{3} 417--420.
			
			\bibitem{r40}
			\textsc{Launay, M.} and \textsc{Limic, V.} (2012).
			Generalized interacting urn models.
			\textit{arXiv preprint, arXiv:1207.5635}.
			
			\bibitem{r41}
			\textsc{Paganoni, A. M.} and \textsc{Secchi, P.} (2004).
			Interacting reinforced urn systems.
			\textit{Advances in Applied Probability}
			\textbf{36}(3) 791--804.
			
			\bibitem{r9}
			\textsc{Crimaldi, I.}, \textsc{Dai Pra, P.}, \textsc{Louis, P. Y.} and \textsc{Minelli, I. G.} (2019).
			Synchronization and functional central limit theorems for interacting reinforced random walks.
			\textit{Stochastic Processes and their Applications}
			\textbf{129}(1) 70--101.


            	
			\bibitem{r1}
			\textsc{Aletti, G.}, \textsc{Crimaldi, I.} and \textsc{Ghiglietti, A.} (2017).
			Synchronization of reinforced stochastic processes with a network-based interaction. \textit{The Annals of Applied Probability}
			\textbf{27}(6) 3787--3844.


            
			\bibitem{r2}
			\textsc{Aletti, G.}, \textsc{Crimaldi, I.} and \textsc{Ghiglietti, A.} (2019).
			Networks of reinforced stochastic processes: Asymptotics for the empirical means.
			\textit{Bernoulli}
			\textbf{25}(4B) 3339--3378.


		
			
			
			\bibitem{r4}
			\textsc{Aletti, G.}, \textsc{Crimaldi, I.} and \textsc{Ghiglietti, A.} (2024).
			Networks of reinforced stochastic processes: A complete description of the first-order asymptotics.
			\textit{Stochastic Processes and their Applications}
			\textbf{176} 104427.
			
			\bibitem{r5}
			\textsc{Aletti, G.}, \textsc{Crimaldi, I.} and \textsc{Ghiglietti, A.} (2024).
			Networks of reinforced stochastic processes: probability of asymptotic polarization and related general results.
			\textit{Stochastic Processes and their Applications}
			\textbf{174} 104376.


             \bibitem{rr1}
            \textsc{Simon, H. A.} (1962). The architecture of complexity. \textit{Proceedings of the American Philosophical Society} \textbf{106}(6) 467--482.

            \bibitem{rr3}
            \textsc{Shi, L.}, \textsc{Cheng, Y.}, \textsc{Shao, J.}, \textsc{Wang, X.} and \textsc{Sheng, H.} (2022). Leader-follower opinion dynamics of signed social networks with asynchronous trust/distrust level evolution.
            \textit{IEEE Transactions on Network Science and Engineering} \textbf{9}(2) 495--509.

            \bibitem{rr2}
            \textsc{Shen, Y.}, \textsc{Ma, X.}, \textsc{Deveci, M.},        \textsc{Herrera-Viedma, E.} and \textsc{Zhan, J.} (2025). A hybrid opinion dynamics model with leaders and followers fusing dynamic social networks in large-scale group decision-making.
            \textit{Information Fusion}
            \textbf{116} 102799.



			\bibitem{r8}
			\textsc{Robbins, H.} and \textsc{Siegmund, D.} (1971).
			A convergence theorem for non negative almost supermartingales and some applications.
			\textit{In Optimizing methods in statistics}
			233--257. Academic Press, New York.
			
			
			
			\bibitem{r10}
			\textsc{Berti, P.}, \textsc{Crimaldi, I.}, \textsc{Pratelli, L.} and \textsc{Rigo, P.} (2011).
			A central limit theorem and
			its applications to multicolor randomly reinforced urns.
			\textit{Journal of Applied Probability}
			\textbf{48} 527--546.
			
			\bibitem{r6}
			\textsc{Crimaldi, I.} and \textsc{Pratelli, L.} (2005).
			Convergence results for multivariate martingales.
			\textit{Stochastic Processes and their Applications}
			\textbf{115} 571--577.
			
		
			
	\end{thebibliography}
	

\end{document}